\theoremstyle{definition}
\newtheorem{THM}{Theorem}
\newtheorem{LEM}[THM]{Lemma}
\newtheorem{PROP}[THM]{Proposition}
\newtheorem{DEF}[THM]{Definition}
\newtheorem{RMK}[THM]{Remark}
\newtheorem*{THM*}{Theorem}
\newtheorem*{LEM*}{Lemma}
\newtheorem*{PROP*}{Proposition}
\newtheorem*{COR*}{Corollary}
\newtheorem*{DEF*}{Definition}
\newtheorem*{RMK*}{Remark}
\newtheorem*{EX*}{Example}
\numberwithin{figure}{section}
\numberwithin{equation}{section}
\numberwithin{THM}{section}
\newcommand{\TL}{\operatorname{\mathsf{TL}}^{A_2}}
\newcommand{\JW}[2]{{JW}_{{+}^{#1}{-}^{#2}}}
\title[Twist formulas and $\mathfrak{sl}_3$~tails of $(2,2m)$-torus links]{Twist formulas for one-row colored $A_2$ webs and $\mathfrak{sl}_3$~tails of $(2,2m)$-torus links}
\author{Wataru Yuasa}
\date{}
\address{Research Institute for Mathematical Sciences\\
  Kyoto University\\
  Kitashirakawa Oiwake-cho, Sakyo-ku, Kyoto 606-8502, Japan}
\email[]{wyuasa@kurims.kyoto-u.ac.jp}
\subjclass[2020]{57K14, 57K16, 11F27}
\keywords{colored Jones polynomial; false theta series}
\begin{document}
%%%%%%%%%%%%%%%%%%%%
%%%%% abstract %%%%%
%%%%%%%%%%%%%%%%%%%%
\begin{abstract}
The $\mathfrak{sl}_3$ colored Jones polynomial $J_{\lambda}^{\mathfrak{sl}_3}(L)$ is obtained by coloring the link components with two-row Young diagram $\lambda$. 
Although it is difficult to compute $J_{\lambda}^{\mathfrak{sl}_3}(L)$ in general, we can calculate it by using Kuperberg's $A_2$ skein relation.
In this paper, 
we show some formulas for twisted two strands colored by one-row Young diagram in $A_2$ web space and compute $J_{(n,0)}^{\mathfrak{sl}_3}(T(2,2m))$ for an oriented $(2,2m)$-torus link. 
These explicit formulas derives the $\mathfrak{sl}_3$ tail of $T(2,2m)$. 
They also gives explicit descriptions of the $\mathfrak{sl}_3$ false theta series with one-row coloring because the $\mathfrak{sl}_2$ tail of $T(2,2m)$ is known as the false theta series.
\end{abstract}
\maketitle
%\acknowledgments
%\tableofcontents
%\thispagestyle{empty}
%\pagestyle{empty}

%%%%%%%%%%%%%%%%%%%%%%%%
%%%%% middle arrow %%%%%
%%%%%%%%%%%%%%%%%%%%%%%%
%refer arrow tips
\tikzset{->-/.style={decoration={
  markings,
  mark=at position #1 with {\arrow[thick, black]{>}}},postaction={decorate}}}
\tikzset{-<-/.style={decoration={
  markings,
  mark=at position #1 with {\arrow[thick, black]{<}}},postaction={decorate}}}
\tikzset{-|-/.style={decoration={
  markings,
  mark=at position #1 with {\arrow[black, semithick]{|}}},postaction={decorate}}}
%%%%%%%%%%%%%%%%%%%%%%
%%%% triple line %%%%%
%%%%%%%%%%%%%%%%%%%%%%
\tikzset{
    triple/.style args={[#1] in [#2] in [#3]}{
        #1,preaction={preaction={draw,#3},draw,#2}
    }
}

%%%%%%%%%%%%%%%%%%%%%
%%%%% body text %%%%%
%%%%%%%%%%%%%%%%%%%%%
\section{Introduction}
In this paper, 
we will make tools to calculate the quantum invariant of knots and links obtained from $\mathfrak{sl}_3$ by using the Kuperberg's linear skein theory. 
Especially, we will prove a full twist formula which is useful to compute explicitly the $\mathfrak{sl}_3$ colored Jones polynomial.
The quantum invariants of knots are obtained through a functor from the category of framed oriented tangles to the representation category of a quantum group of a simple Lie algebra.
Thus, one can define an invariant $J_{V}^{\mathfrak{g}}(K)$ of a knot $K$ for each simple Lie algebra $\mathfrak{g}$ and its representation $V$, that is, there exist infinitely many quantum invariants of knots.
However, it is difficult to compute $J_{V}^{\mathfrak{g}}(K)$ explicitly. 
The most simple case is $\mathfrak{g}=\mathfrak{sl}_2$ with the $2$-dimensional irreducible representation $V=\mathbb{C}^2$.
This quantum invariant reconstructs the Jones polynomial $J(K)$.
We know that $J(K)$ can be calculated from a knot diagram of $K$ by using the Kauffman bracket skein relation, for example~\cite{Kauffman87}. 
More generally, one can compute the colored Jones polynomial $J_{n}(K)$ obtained from the $(n+1)$-dimensional irreducible representation of $\mathfrak{sl}_2$ by applying the Kauffman bracket skein relation to a knot diagram colored by the Jones-Wenzl projector.
The Jones-Wenzl projector~\cite{Jones83, Wenzl87} corresponds to the projector onto the $(n+1)$-dimensional irreducible representation in $V^{{}\otimes n}$.
In this case, $J_{n}(K)$ is explicitly calculated for many knots and there are many useful formulas which decompose a tangle diagram to the linear sum of the web basis by the skein relation.
This method to calculate quantum invariants from knot diagrams is called the linear skein theory, see for example \cite{Lickorish97}.
The linear skein theory is constructed for some other $\mathfrak{g}$ than $\mathfrak{sl}_2$.
In a calculational point of view,
one of the most reasonable quantum invariants next after $\mathfrak{sl}_2$ is the $\mathfrak{sl}_3$ colored Jones polynomial $J_{(m,n)}^{\mathfrak{sl}_3}(K)$ which is obtained from the irreducible representation with the highest weight $(m,n)$ of $\mathfrak{sl}_3$.
We use the linear skein theory for $\mathfrak{sl}_3$ constructed by Kuperberg~\cite{Kuperberg94, Kuperberg96},
see Definition~\ref{A2skein} of the $A_2$ skein relation.
However, there are few non-trivial examples of explicit formulas of $J_{(m,n)}^{\mathfrak{sl}_3}(K)$.
For example, Lawrence~{\cite{Lawrence03}} calculated $J_{(m,n)}^{\mathfrak{sl}_3}(K)$ for the trefoil knot, more generally, In {\cite{GaroufalidisMortonVuong13, GaroufalidisVuong17} for the $(2,2m+1)$- and $(4,5)$-torus knots by using a representation theoretical method.
In~\cite{Yuasa17}, the author calculated $J_{(n,0)}^{\mathfrak{sl}_3}(K)$ for the two-bridge links $K$ with one-row coloring $(n,0)$ by the Kuperberg's linear skein theory. 
Important formulas used in \cite{Yuasa17} is the following full twist formulas:

\begin{THM}[anti-parallel full twist formula~\cite{Yuasa17}]\label{antifull}
Let $d=\min\{s,t\}$ and $\delta=\left|s-t\right|$.
\begin{align*}
\mathord{ \tikz[baseline=-.6ex, scale=.1]{
\draw (-6,5) -- (-9,5);
\draw (-6,-5) -- (-9,-5);
\draw (6,5) -- (9,5);
\draw (6,-5) -- (9,-5);
\draw[->-=.9, white, double=black, double distance=0.4pt, ultra thick] (-6,-5) to[out=east, in=west] (0,5);
\draw[-<-=.9, white, double=black, double distance=0.4pt, ultra thick] (-6,5) to[out=east, in=west] (0,-5);
\draw[white, double=black, double distance=0.4pt, ultra thick] (0,-5) to[out=east, in=west] (6,5);
\draw[white, double=black, double distance=0.4pt, ultra thick] (0,5) to[out=east, in=west] (6,-5);
\draw[fill=white] (-7,-7) rectangle (-6,-3);
\draw[fill=white] (-7,7) rectangle (-6,3);
\draw[fill=white] (7,-7) rectangle (6,-3);
\draw[fill=white] (7,7) rectangle (6,3);
\node at (-7,-7)[left]{$\scriptstyle{s}$};
\node at (-7,7)[left]{$\scriptstyle{t}$};
\node at (7,-7)[right]{$\scriptstyle{s}$};
\node at (7,7)[right]{$\scriptstyle{t}$};
}\ }
&=q^{\frac{st}{3}}
\sum_{l=0}^{\infty}
q^{l^2-l}q^{-(s+t)l}
(q)_{l}{s \choose l}_{q}{t \choose l}_{q}
\mathord{\ \tikz[baseline=-.6ex, scale=.1]{
\draw (-6,5) -- (-9,5);
\draw (-6,-5) -- (-9,-5);
\draw (6,5) -- (9,5);
\draw (6,-5) -- (9,-5);
\draw[-<-=.5] (-6,6) -- (6,6);
\draw[->-=.5] (-6,-6) -- (6,-6);
\draw[-<-=.5] (-6,4) to[out=east, in=east] (-6,-4);
\draw[->-=.5] (6,4) to[out=west, in=west] (6,-4);
\draw[fill=white] (-7,-7) rectangle (-6,-3);
\draw[fill=white] (-7,7) rectangle (-6,3);
\draw[fill=white] (7,-7) rectangle (6,-3);
\draw[fill=white] (7,7) rectangle (6,3);
\node at (-7,-7)[left]{$\scriptstyle{s}$};
\node at (-7,7)[left]{$\scriptstyle{t}$};
\node at (7,-7)[right]{$\scriptstyle{s}$};
\node at (7,7)[right]{$\scriptstyle{t}$};
\node at (0,6)[above]{$\scriptstyle{t-l}$};
\node at (0,-6)[below]{$\scriptstyle{s-l}$};
\node at (-6,0){$\scriptstyle{l}$};
\node at (6,0){$\scriptstyle{l}$};
}\ }\\
&=q^{-\frac{2}{3}d(d+\delta)-d}
\sum_{k=0}^{d}
q^{k(k+\delta)+k}
\frac{(q)_{d+\delta}}{(q)_{k+\delta}}{d \choose k}_{q}
\mathord{\ \tikz[baseline=-.6ex, scale=.1]{
\draw (-6,5) -- (-9,5);
\draw (-6,-5) -- (-9,-5);
\draw (6,5) -- (9,5);
\draw (6,-5) -- (9,-5);
\draw[-<-=.5] (-6,6) -- (6,6);
\draw[->-=.5] (-6,-6) -- (6,-6);
\draw[-<-=.5] (-6,4) to[out=east, in=east] (-6,-4);
\draw[->-=.5] (6,4) to[out=west, in=west] (6,-4);
\draw[fill=white] (-7,-7) rectangle (-6,-3);
\draw[fill=white] (-7,7) rectangle (-6,3);
\draw[fill=white] (7,-7) rectangle (6,-3);
\draw[fill=white] (7,7) rectangle (6,3);
\node at (-7,-7)[left]{$\scriptstyle{s}$};
\node at (-7,7)[left]{$\scriptstyle{t}$};
\node at (7,-7)[right]{$\scriptstyle{s}$};
\node at (7,7)[right]{$\scriptstyle{t}$};
\node at (0,6)[above]{$\scriptstyle{k+t-d}$};
\node at (0,-6)[below]{$\scriptstyle{k+s-d}$};
\node at (-4,0)[left]{$\scriptstyle{d-k}$};
\node at (4,0)[right]{$\scriptstyle{d-k}$};
}\ }.
\end{align*}
\end{THM}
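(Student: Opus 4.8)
The plan is to diagonalize the full twist on the space of anti-parallel $A_2$ webs with boundary colored $(s,t)$, for which the single-rung ladder webs appearing on the right-hand side form a natural basis. Write $E_l$ for the ladder web whose rung has color $l$ and whose horizontal through-edges are colored $t-l$ and $s-l$, carrying the one-row clasps $\JW{s}{0}$ and $\JW{0}{t}$ on the boundary. These are indexed by $l=0,1,\dots,\min\{s,t\}$ (the $q$-binomials in the statement vanish outside this range) and correspond to the multiplicity-free Clebsch--Gordan summands $V_{(s-l,\,t-l)}$ of $V_{(s,0)}\otimes V_{(0,t)}$. Since the full twist pictured on the left is the double braiding $c\,c$ on this tensor factor, an endomorphism of the web space, it suffices to compute its matrix in the basis $\{E_l\}$.

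The key structural step is to show that the $E_l$ are pairwise orthogonal up to scalars, $E_m\circ E_l=\delta_{m,l}\,\alpha_l\,E_l$ for a scalar $\alpha_l$. Skein-theoretically this follows by stacking two rungs and reducing the resulting square with the $A_2$ skein relation of Definition~\ref{A2skein} together with the turnback-annihilating property of the clasps: because the decomposition is multiplicity free, a route through two distinct summands $V_{(s-m,t-m)}$, $V_{(s-l,t-l)}$ is forced to vanish, while a route through the same summand reproduces $E_l$ with the closed-network coefficient $\alpha_l$. Hence the $E_l$ are scalar multiples of the minimal idempotents $P_l$, and the full twist is automatically diagonal: writing $\mathrm{FT}=\sum_l\beta_l E_l$ and composing with $E_m$ gives $\beta_m\alpha_m=\lambda_m$, where $\lambda_m$ is the scalar by which the double braiding acts on the $m$-th summand. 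Thus $\beta_l=\lambda_l/\alpha_l$, and the problem splits into computing the eigenvalue $\lambda_l$ and the theta-coefficient $\alpha_l$.

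Next I would evaluate the two factors. The eigenvalue $\lambda_l$ is the ratio of ribbon twists $\theta_{(s-l,t-l)}\,\theta_{(s,0)}^{-1}\,\theta_{(0,t)}^{-1}$; inserting the quadratic Casimir value for $\mathfrak{sl}_3$ produces the quadratic part $q^{l^2}$ and the fractional framing factor $q^{st/3}$ (the denominator $3$ being the signature of the $\mathfrak{sl}_3$ weight lattice), the remaining linear powers $q^{-l-(s+t)l}$ being pinned down by the framing normalization of the clasps. Skein-theoretically the same scalar is obtained by sliding the two crossings of $E_l$ onto its through-edges and rung and evaluating each as a curl, each contributing its framing coefficient. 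The coefficient $\alpha_l$ is the value of the closed $A_2$ ``theta'' network obtained by self-pairing $E_l$; evaluating it by the clasp recursion (equivalently, by the $A_2$ theta-web formulas of \cite{Kuperberg96, Yuasa17}) yields the combinatorial factor $(q)_l\binom{s}{l}_q\binom{t}{l}_q$ after cancelling the quantum-dimension normalizations. Collecting $\beta_l=\lambda_l/\alpha_l$ gives the first displayed equality.

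Finally, the second equality is a purely formal rewriting of the first. Substituting $l=d-k$ with $d=\min\{s,t\}$, $\delta=|s-t|$ turns the two $q$-binomials into $\binom{d}{k}_q$ and $\binom{d+\delta}{d-k}_q=(q)_{d+\delta}/((q)_{d-k}(q)_{k+\delta})$; the factor $(q)_l=(q)_{d-k}$ cancels, the prefactor $q^{st/3}=q^{d(d+\delta)/3}$ combines with the quadratic exponent, and a direct comparison of $q$-powers (using $st=d(d+\delta)$ and $s+t=2d+\delta$) reproduces $q^{-\frac{2}{3}d(d+\delta)-d}$ and $q^{k(k+\delta)+k}$. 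I expect the genuine obstacle to be the middle step --- the closed-form evaluation of the $A_2$ theta-coefficient $\alpha_l$ and the precise reconciliation of the convention-dependent linear $q$-powers and signs coming from the dual (downward) clasp; the orthogonality of the ladder webs and the final reindexing are comparatively routine.
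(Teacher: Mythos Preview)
Your central structural claim---that the ladder webs $E_l$ are mutually orthogonal, $E_m\circ E_l=\delta_{m,l}\alpha_l E_l$---is false, and the gap is not recoverable along the lines you sketch. The immediate counterexample is $l=0$: the web $E_0$ has rungs of label $0$ and is nothing but the identity morphism (two parallel clasped strands), so $E_0\circ E_l=E_l$ for every $l$. More generally, $E_l$ factors not through the irreducible $V_{(s-l,t-l)}$ but through $V_{(s-l,0)}\otimes V_{(0,t-l)}\cong\bigoplus_{m\geq l}V_{(s-m,t-m)}$; hence the change of basis from the minimal idempotents $P_l$ to the $E_l$ is only \emph{triangular}, $E_l=\sum_{m\geq l}c_{l,m}P_m$, not diagonal. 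Your formula $\beta_l=\lambda_l/\alpha_l$ therefore cannot hold: inverting the triangular system gives instead $\beta_m=\sum_{l\leq m}\lambda_l\,d_{l,m}$, a genuine sum over eigenvalues, and showing that this sum collapses to the closed form $q^{l^2-l-(s+t)l}(q)_l\binom{s}{l}_q\binom{t}{l}_q$ is the entire content of the theorem, not a routine theta-net evaluation.

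The paper (citing \cite{Yuasa17}) takes a completely different route that sidesteps the representation-theoretic diagonalization. One introduces a two-parameter family of webs $\sigma_d(k,l)$ interpolating between the full twist $\sigma_d(0,0)$ and the basis webs $\sigma_d(k,l)$ with $k+l=d$, and proves a local skein recursion $\sigma_d(k,l)=X(k,l)\,\sigma_d(k+1,l)+Y(k,l)\,\sigma_d(k,l+1)$ satisfying the $q$-commutation $X(k,l)Y(k+1,l)=qY(k,l)X(k,l+1)$. Proposition~\ref{partition} then expands $\sigma_d(0,0)$ as a sum over lattice paths from $(0,0)$ to $(k,l)$; the $q$-commutation makes each path contribute a monomial in $q$, and the generating function of Young diagrams in a $k\times l$ box produces the $q$-binomial $\binom{d}{k}_q$ directly. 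This is how the product $(q)_l\binom{s}{l}_q\binom{t}{l}_q$ actually arises---from path combinatorics, not from a theta-coefficient. If you want to salvage the eigenvalue approach you would need the full triangular change-of-basis matrix (essentially certain $A_2$ $6j$-symbols with one-row labels) followed by a nontrivial $q$-hypergeometric summation; the recursive method avoids this entirely.
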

From this theorem, the author obtained an $m$-full twist formula for anti-parallel two strands.
\begin{THM}[anti-parallel $m$-full twist formula~\cite{Yuasa17}]\label{antimfull}
Let $\underline{k}=(k_{1},\dots,k_{m})$ be an $m$-tuple of integers, $k_{0}=d=\min\{s,t\}$, and $\delta=\left| s-t \right|$.
\begin{align*}
\mathord{\ \tikz[baseline=-.6ex]{
\begin{scope}[xshift=-1cm]
\draw 
(-.5,.4) -- +(-.2,0)
(-.5,-.4) -- +(-.2,0);
\draw[->-=1, white, double=black, double distance=0.4pt, ultra thick] 
(-.5,-.4) to[out=east, in=west] (.0,.4);
\draw[-<-=1, white, double=black, double distance=0.4pt, ultra thick] 
(-.5,.4) to[out=east, in=west] (.0,-.4);
\draw[white, double=black, double distance=0.4pt, ultra thick] 
(0,-.4) to[out=east, in=west] (.5,.4);
\draw[white, double=black, double distance=0.4pt, ultra thick] 
(0,.4) to[out=east, in=west] (.5,-.4);
\draw[fill=white] (-.5,-.6) rectangle +(-.1,.4);
\draw[fill=white] (-.5,.6) rectangle +(-.1,-.4);
\node at (-.5,-.6)[left]{$\scriptstyle{s}$};
\node at (-.5,.6)[left]{$\scriptstyle{t}$};
\end{scope}
\node at (.0,.0){$\cdots$};
\node at (.0,-.4)[below]{$\scriptstyle{m\text{ full twists}}$};
\begin{scope}[xshift=1cm]
\draw
(.5,-.4) -- +(.2,0)
(.5,.4) -- +(.2,0);
\draw[->-=1, white, double=black, double distance=0.4pt, ultra thick] 
(-.5,-.4) to[out=east, in=west] (.0,.4);
\draw[-<-=1, white, double=black, double distance=0.4pt, ultra thick] 
(-.5,.4) to[out=east, in=west] (.0,-.4);
\draw[white, double=black, double distance=0.4pt, ultra thick] 
(0,-.4) to[out=east, in=west] (.5,.4);
\draw[white, double=black, double distance=0.4pt, ultra thick] 
(0,.4) to[out=east, in=west] (.5,-.4);
\draw[fill=white] (.5,-.6) rectangle +(.1,.4);
\draw[fill=white] (.5,.6) rectangle +(.1,-.4);
\node at (.5,-.6)[right]{$\scriptstyle{s}$};
\node at (.5,.6)[right]{$\scriptstyle{t}$};
\end{scope}
}\ }
&=q^{-\frac{2m}{3}k_{0}(k_{0}+\delta)-2mk_{0}}
\sum_{k_{0}\geq k_{1}\geq\cdots\geq k_{m}\geq 0} C(\underline{k})
\mathord{\ \tikz[baseline=-.6ex, scale=.1]{
\draw (-6,5) -- (-9,5);
\draw (-6,-5) -- (-9,-5);
\draw (6,5) -- (9,5);
\draw (6,-5) -- (9,-5);
\draw[-<-=.5] (-6,6) -- (6,6);
\draw[->-=.5] (-6,-6) -- (6,-6);
\draw[-<-=.5] (-6,4) to[out=east, in=east] (-6,-4);
\draw[->-=.5] (6,4) to[out=west, in=west] (6,-4);
\draw[fill=white] (-7,-7) rectangle (-6,-3);
\draw[fill=white] (-7,7) rectangle (-6,3);
\draw[fill=white] (7,-7) rectangle (6,-3);
\draw[fill=white] (7,7) rectangle (6,3);
\node at (-7,-7)[left]{$\scriptstyle{s}$};
\node at (-7,7)[left]{$\scriptstyle{t}$};
\node at (7,-7)[right]{$\scriptstyle{s}$};
\node at (7,7)[right]{$\scriptstyle{t}$};
\node at (0,6)[above]{$\scriptstyle{k_{m}+(t-k_{0})}$};
\node at (0,-6)[below]{$\scriptstyle{k_{m}+(s-k_{0})}$};
\node at (-4,0)[left]{$\scriptstyle{k_{0}-k_{m}}$};
\node at (4,0)[right]{$\scriptstyle{k_{0}-k_{m}}$};
}\ },
\end{align*}
where
\[
C(\underline{k})=q^{\sum_{i=1}^{m}k_{i}(k_{i}+\delta)+2k_{i}}q^{k_{0}-k_{m}}\frac{(q)_{k_{0}+\delta}}{(q)_{k_{m}+\delta}}{k_{0} \choose k_{1}',k_{2}',\dots,k_{m}',k_{m}}_{q}
\]
 and $k_{i+1}'=k_i-k_{i+1}$ for $i=0,1,\dots,m-1$.
\end{THM}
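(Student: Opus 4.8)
The plan is to prove the formula by induction on $m$, using Theorem~\ref{antifull} both as the base case $m=1$ and as the engine that peels off one full twist at a time. Writing the stack of $m$ anti-parallel full twists as a single full twist placed next to a stack of $m-1$, I would first apply Theorem~\ref{antifull} in its second form to the outermost twist, expanding it as $q^{-\frac{2}{3}d(d+\delta)-d}\sum_{k_1=0}^{d}q^{k_1(k_1+\delta)+k_1}\frac{(q)_{d+\delta}}{(q)_{k_1+\delta}}{d\choose k_1}_{q}$ times the $H$-shaped web whose turnbacks have width $d-k_1$ and whose through-strands are colored $k_1+t-d$ and $k_1+s-d$. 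The turnback produced on the side facing the remaining $m-1$ twists then has to be transported across them, and controlling this transport is the heart of the argument.

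The crux, which I expect to be the main obstacle, is the following reduction move: an anti-parallel full twist on the pair $(t,s)$ carrying a turnback of width $d-k$ on one side equals the full twist on the reduced pair $(k+t-d,\,k+s-d)$, with the turnback pushed to the opposite side, multiplied by the explicit scalar $\mu_{d,k}=q^{-\frac{2}{3}\left(d(d+\delta)-k(k+\delta)\right)-2(d-k)}$. Geometrically this is a yanking move: the $d-k$ capped strands are pulled straight through the twist, and $\mu_{d,k}$ records the self-twist of the turnback together with its linking with the through-strands; I would establish it either directly from the $A_2$ skein relations and the ribbon (twist) eigenvalues on one-row colored webs, or by re-expanding through Theorem~\ref{antifull} and matching. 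Since the reduced through-strands $(k_1+t-d,\,k_1+s-d)$ keep the same $\delta$ but have $\min=k_1$, sliding the width-$(k_0-k_1)$ turnback (with $k_0:=d$) across each of the remaining $m-1$ twists contributes $\mu_{k_0,k_1}^{\,m-1}$ and leaves exactly an $(m-1)$-full twist on $(k_1+t-d,\,k_1+s-d)$, to which the induction hypothesis applies with parameters $d\mapsto k_1$ and $\delta\mapsto\delta$. The ordering $k_i\ge k_{i+1}\ge 0$ in the summation is then automatic, because each turnback width $k_i-k_{i+1}$ is nonnegative.

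It remains to collect the scalars. The induction hypothesis supplies a sum over $k_1\ge k_2\ge\cdots\ge k_m\ge 0$, and I would verify that the product of the three contributions — the coefficient from Theorem~\ref{antifull} for the step $k_0\to k_1$, the factor $\mu_{k_0,k_1}^{\,m-1}$, and the $(m-1)$-st instance of $C$ — reassembles into $C(\underline{k})$ together with the global prefactor $q^{-\frac{2m}{3}k_0(k_0+\delta)-2mk_0}$. Three routine simplifications do the work: the Pochhammer ratios telescope as $\frac{(q)_{k_0+\delta}}{(q)_{k_1+\delta}}\cdot\frac{(q)_{k_1+\delta}}{(q)_{k_m+\delta}}=\frac{(q)_{k_0+\delta}}{(q)_{k_m+\delta}}$; the $q$-binomial from the peeled twist merges with the $q$-multinomial from the hypothesis via ${k_0\choose k_1}_{q}{k_1\choose k_2',\dots,k_m',k_m}_{q}={k_0\choose k_1',k_2',\dots,k_m',k_m}_{q}$; and the powers of $q$, once $\mu_{k_0,k_1}^{\,m-1}$ is expanded using $d(d+\delta)-k(k+\delta)=(d-k)(d+k+\delta)$, add up to the exponents appearing in $C(\underline{k})$ and in the prefactor. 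As a normalization check that pins down all the bookkeeping, I would confirm that the claimed formula at $m=1$ collapses exactly to Theorem~\ref{antifull}.
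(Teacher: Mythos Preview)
Your proposal is correct. The paper does not prove this theorem here---it is quoted from \cite{Yuasa17}---but your strategy matches exactly the method the paper uses to prove the parallel analogue (Theorem~\ref{parallelmfull}): apply the single full-twist formula to the outermost twist, slide the resulting turnback (there, the pair of triangles) through the remaining twists while recording the scalar, and iterate; the only cosmetic difference is that the paper writes the iteration out explicitly rather than packaging it as an induction on $m$.
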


The above formula plays an important role in the study of $\mathfrak{sl}_3$ tails of knots and links.
The tail of a knot $K$ is a $q$-series which is a limit of the colored Jones polynomials $\{J_{n}(K;q)\}_n$. 
Independently, the existence of the tails for alternating knots was shown in \cite{DasbachLin06, DasbachLin07} and \cite{GaroufalidisLe15}, more generally, for adequate links in \cite{Armond13}. 
In \cite{GaroufalidisLe15}, Garoufalidis and L\^{e} showed a more general stability, the existence of the tail is the zero-stability, for alternating knots.
Some explicit descriptions of tails are known for $T(2,2m+1)$ in \cite{ArmondDasbach11A}, 
for $T(2,2m)$ in \cite{Hajij16}, 
for a pretzel knot $P(2k+1,2,2l+1)$ in \cite{ElhamdadiHajij17}, 
for knots with small crossing numbers in \cite{KeilthyOsburn16}, \cite{BeirneOsburn17}, and \cite{GaroufalidisLe15}.
Especially, the tail of $T(2,2m+1)$ is given by the theta series and one of $T(2,2m)$ is the false theta series.

One can consider a tail for the $\mathfrak{sl}_3$ colored Jones polynomial. 
We call it the $\mathfrak{sl}_3$ tail.
However, there are many problems such that how do we define a limit for $\{J_{(k,l)}^{\mathfrak{sl}_3}(K)\}_{k,l}$ and the existence of the $\mathfrak{sl}_3$ tail etc.
As a case study, the author gave an explicit formula of the $\mathfrak{sl}_3$ tail for the $(2,2m)$-torus link $T_{\leftrightarrows}(2,2m)$ with anti-parallel orientation by using Theorem~\ref{antimfull} in \cite{Yuasa18}, see also Theomrem~\ref{antiparalleltail}.
This $\mathfrak{sl}_3$ tail can be considered a special type of the $\mathfrak{sl}_3$ false theta series.
In fact, Bringmann-Kaszian-Milas~\cite{BringmannKaszianMilas19} commented that the $\mathfrak{sl}_3$ tail of $T_{\leftrightarrows}(2,2m)$ coincides with the diagonal part of the $\mathfrak{sl}_3$ false theta series defined through the study of vertex operator algebras in~\cite{BringmannMilas15,BringmannMilas17,CreutzigMilas14,CreutzigMilas17}.

This paper is organized as follows.
In section $2$, we review the Kuperberg's linear skein theory fo $A_2$ and give a diagrammatic definition of the Jones-Wenzl projector for $A_2$.
In section $3$, we derive formulas which decompose an $m$-full twist of two-strands with one-row coloring into the linear sum of $A_2$ basis webs.
We use the generating function for Young diagrams by involving the decompositions of the $A_2$ web with lattice paths in proofs of these formulas.
As an application, we compute the $\mathfrak{sl}_3$ colored Jones polynomials and the $\mathfrak{sl}_3$ tails of oriented $(2,m)$-torus links with one-row coloring in sectoin~$4$.

\section{Definition and properties of the $A_2$ clasp}
We use the following notation.
\begin{itemize}
\item $\left[n\right]=\frac{q^{\frac{n}{2}}-q^{-\frac{n}{2}}}{q^{\frac{1}{2}}-q^{\frac{1}{2}}}$ is a {\em quantum integer} for $n\in\mathbb{Z}_{{}\geq 0}$.
\item ${n \brack k}=\frac{\left[n\right]}{\left[k\right]\left[n-k\right]}$ for $0\leq k\leq n$ and ${n \brack k}=0$ for $k>n$.
\item $(q)_{n}=\prod_{i=1}^{n} \left(1-q^i\right)$ is a {\em $q$-Pochhammer symbol}.
\item $\binom{n}{k}_{q}=\frac{(q)_{n}}{(q)_{k}(q)_{n-k}}$ for $0\leq k\leq n$ and $\binom{n}{k}_{q}=0$ for $k>n$.
\item $\binom{n}{k_{1},k_{2},\dots,k_{m}}_{q}=\frac{(q)_{n}}{(q)_{k_{1}}(q)_{k_{2}}\cdots(q)_{k_{m}}}$ for positive integers $k_i$'s such that $\sum_{i=1}^{m}k_{i}=n$.
\end{itemize}

Let us define $A_2$ web spaces based on \cite{Kuperberg96}. 
We consider a disk $D$ with signed marked points $(P,\epsilon)$ on its boundary where $P\subset\partial D$ is a finite set and $\epsilon\colon P\to\{{+},{-}\}$ a map. 

A {\em tangled bipartite uni-trivalent graph} on $D$ is an immersion of a directed graph into $D$ satisfying (1) -- (4):
\begin{enumerate}
\item the valency of a vertex of underlying graph is $1$ or $3$,
\item all crossing points are transversal double points of two edges with under/over information,
\item the set of univalent vertices coincides with $P$,
\item a neighborhood of a vertex is one of the followings:
\begin{itemize}
\item[] 
{\ \tikz[baseline=-.6ex, scale=.1]{
\draw[dashed, fill=white] (0,0) circle [radius=7];
\draw[-<-=.5] (0:0) -- (90:7); 
\draw[-<-=.5] (0:0) -- (210:7); 
\draw[-<-=.5] (0:0) -- (-30:7);
%\fill (0,0) circle [radius=1.5pt];
}\ }
,
{\ \tikz[baseline=-.6ex, scale=.1]{
\draw[dashed, fill=white] (0,0) circle [radius=7];
\draw[->-=.5] (0:0) -- (90:7); 
\draw[->-=.5] (0:0) -- (210:7); 
\draw[->-=.5] (0:0) -- (-30:7);
%\fill (0,0) circle [radius=1.5pt];
}\ }
,
{\ \tikz[baseline=-.6ex, scale=.1]{
\draw[dashed] (0,0) circle [radius=7];
\draw[-<-=.5] (0,0) -- (7,0);
\node at (0,0) [left]{${+}$};
\draw[thick] (0,7) -- (0,-7);
\draw[fill=cyan] (0,0) circle [radius=.5];
}\ }
,
{\ \tikz[baseline=-.6ex, scale=.1]{
\draw[dashed] (0,0) circle [radius=7];
\draw[->-=.5] (0,0)--(7,0);
\node at (0,0) [left]{${-}$};
\draw[thick] (0,7) -- (0,-7);
\draw[fill=cyan] (0,0) circle [radius=.5];
}\ }
.
\end{itemize}
\end{enumerate}

\begin{DEF}[$A_2$ web space~\cite{Kuperberg96}]\label{A2skein}
Let $G(\epsilon;D)$ be the set of boundary fixing isotopy classes of tangled trivalent graphs on $D$.
The $A_2$ web space $W(\epsilon;D)$ is the quotient of the $\mathbb{C}[q^{\frac{1}{6}},q^{-\frac{1}{6}}]$-vector space on $G(\epsilon;D)$ by the following {\em $A_2$ skein relation}:
\begin{itemize}
\item
$
\mathord{\ \tikz[baseline=-.6ex, scale=.1]{
\draw[thin, dashed, fill=white] (0,0) circle [radius=7];
\draw[->-=.8] (-45:7) -- (135:7);
\draw[->-=.8, white, double=black, double distance=0.4pt, line width=2.4pt] (-135:7) -- (45:7);
}\ }
=q^{\frac{1}{3}}
\mathord{\ \tikz[baseline=-.6ex, scale=.1]{
\draw[thin, dashed, fill=white] (0,0) circle [radius=7];
\draw[->-=.5] (-45:7) to[out=north west, in=south] (3,0) to[out=north, in=south west] (45:7);
\draw[->-=.5] (-135:7) to[out=north east, in=south] (-3,0) to[out=north, in=south east] (135:7);
}\ }
-q^{-\frac{1}{6}}
\mathord{\ \tikz[baseline=-.6ex, scale=.1]{
\draw[thin, dashed, fill=white] (0,0) circle [radius=7];
\draw[->-=.5] (-45:7) -- (0,-3);
\draw[->-=.5] (-135:7) -- (0,-3);
\draw[-<-=.5] (45:7) -- (0,3);
\draw[-<-=.5] (135:7) -- (0,3);
\draw[-<-=.5] (0,-3) -- (0,3);
}\ }$,
\item
$\mathord{\ \tikz[baseline=-.6ex, scale=.1]{
\draw[thin, dashed, fill=white] (0,0) circle [radius=7];
\draw[->-=.8] (-135:7) -- (45:7);
\draw[->-=.8, white, double=black, double distance=0.4pt, line width=2.4pt] (-45:7) -- (135:7);
}\ }
=q^{-\frac{1}{3}}
\mathord{\ \tikz[baseline=-.6ex, scale=.1]{
\draw[thin, dashed, fill=white] (0,0) circle [radius=7];
\draw[->-=.5] (-45:7) to[out=north west, in=south] (3,0) to[out=north, in=south west] (45:7);
\draw[->-=.5] (-135:7) to[out=north east, in=south] (-3,0) to[out=north, in=south east] (135:7);
}\ }
-q^{\frac{1}{6}}
\mathord{\ \tikz[baseline=-.6ex, scale=.1]{
\draw[thin, dashed, fill=white] (0,0) circle [radius=7];
\draw[->-=.5] (-45:7) -- (0,-3);
\draw[->-=.5] (-135:7) -- (0,-3);
\draw[-<-=.5] (45:7) -- (0,3);
\draw[-<-=.5] (135:7) -- (0,3);
\draw[-<-=.5] (0,-3) -- (0,3);
}\ }$
\item 
$\mathord{\ \tikz[baseline=-.6ex, scale=.1]{
\draw[thin, dashed, fill=white] (0,0) circle [radius=7];
\draw[-<-=.6] (-45:7) -- (-45:3);
\draw[->-=.6] (-135:7) -- (-135:3);
\draw[->-=.6] (45:7) -- (45:3);
\draw[-<-=.6] (135:7) -- (135:3);
\draw[-<-=.5] (45:3) -- (135:3);
\draw[->-=.5] (-45:3) -- (-135:3);
\draw[-<-=.5] (45:3) -- (-45:3);
\draw[->-=.5] (135:3) -- (-135:3);
}\ }
=
\mathord{\ \tikz[baseline=-.6ex, scale=.1]{
\draw[thin, dashed, fill=white] (0,0) circle [radius=7];
\draw[-<-=.5] (-45:7) to[out=north west, in=south] (3,0) to[out=north, in=south west] (45:7);
\draw[->-=.5] (-135:7) to[out=north east, in=south] (-3,0) to[out=north, in=south east] (135:7);
}\ }
+
\mathord{\ \tikz[baseline=-.6ex, rotate=90, scale=.1]{
\draw[thin, dashed, fill=white] (0,0) circle [radius=7];
\draw[-<-=.5] (-45:7) to[out=north west, in=south] (3,0) to[out=north, in=south west] (45:7);
\draw[->-=.5] (-135:7) to[out=north east, in=south] (-3,0) to[out=north, in=south east] (135:7);
}\ }
$,\vspace{1ex}
\item 
$\mathord{\ \tikz[baseline=-.6ex, scale=.1]{
\draw[thin, dashed, fill=white] (0,0) circle [radius=7];
\draw[->-=.5] (0,-7) -- (0,-3);
\draw[->-=.5] (0,3) -- (0,7);
\draw[-<-=.5] (0,-3) to[out=east, in=south] (3,0) to[out=north, in=east] (0,3);
\draw[-<-=.5] (0,-3) to[out=west, in=south] (-3,0) to[out=north, in=west] (0,3);
}\ }
=
\left[2\right]
\mathord{\ \tikz[baseline=-.6ex, scale=.1]{
\draw[thin, dashed, fill=white] (0,0) circle [radius=7];
\draw[->-=.5] (0,-7) -- (0,7);
}\ }
$,\vspace{1ex}
\item 
$
\mathord{\ \tikz[baseline=-.6ex, scale=.1]{
\draw[thin, dashed, fill=white] (0,0) circle [radius=7];
\draw[->-=.5] (0,0) circle [radius=3];
}\ }
=
\left[3\right]
\mathord{\ \tikz[baseline=-.6ex, scale=.1]{
\draw[thin, dashed, fill=white] (0,0) circle [radius=7];
}\ }
=
\mathord{\ \tikz[baseline=-.6ex, scale=.1]{
\draw[thin, dashed, fill=white] (0,0) circle [radius=7];
\draw[-<-=.5] (0,0) circle [radius=3];
}\ }
.
$
\end{itemize}
An element in $W(\epsilon;D)$ is called {\em web} and an element in $G(\epsilon;D)$ without crossings which has no internal $0$-, $2$-, $4$-gons a {\em basis web}. 
Any web is described as the sum of basis webs.
\end{DEF}

The $A_2$ skein relation realize the {\em Reidemeister moves} (R1) -- (R4), that is,
we can show that webs represent diagrams in the left side and right side is the same web in $W(D;\epsilon)$.
\begin{align*}
&\text{(R1)}~
\mathord{\ \tikz[baseline=-.6ex, scale=.1]{
\draw [thin, dashed] (0,0) circle [radius=5];
\draw (3,-2)
to[out=south, in=east] (2,-3)
to[out=west, in=south] (0,0)
to[out=north, in=west] (2,3)
to[out=east, in=north] (3,2);
\draw[white, double=black, double distance=0.4pt, line width=2.4pt] (0,-5) 
to[out=north, in=west] (2,-1)
to[out=east, in=north] (3,-2);
\draw[white, double=black, double distance=0.4pt, line width=2.4pt] (3,2)
to[out=south, in=east] (2,1)
to[out=west, in=south] (0,5);
}\ }
\tikz[baseline=-.6ex, scale=.5]{
\draw [<->, xshift=1.5cm] (1,0)--(2,0);
} 
\mathord{\ \tikz[baseline=-.6ex, scale=.1, xshift=3cm]{
\draw[thin, dashed] (0,0) circle [radius=5];
\draw (90:5) to (-90:5);
}\ }
&&\text{(R2)}~
\mathord{\ \tikz[baseline=-.6ex, scale=.1]{
\draw[thin, dashed] (0,0) circle [radius=5];
\draw (135:5) to[out=south east, in=west] (0,-2) to[out=east, in=south west](45:5);
\draw[white, double=black, double distance=0.4pt, line width=2.4pt]
(-135:5) to[out=north east, in=west] (0,2) to[out=east, in=north west] (-45:5);
}\ }
\tikz[baseline=-.6ex, scale=.5]{
\draw [<->, xshift=1.5cm] (1,0)--(2,0);
}
\mathord{\ \tikz[baseline=-.6ex, scale=.1, xshift=3cm]{
\draw[thin, dashed] (0,0) circle [radius=5];
\draw (135:5) to[out=south east, in=west](0,2) to[out=east, in=south west](45:5);
\draw (-135:5) to[out=north east, in=west](0,-2) to[out=east, in=north west] (-45:5);
}\ },\\
&\text{(R3)}~
\mathord{\ \tikz[baseline=-.6ex, scale=.1]{
\draw [thin, dashed] (0,0) circle [radius=5];
\draw (-135:5) -- (45:5);
\draw[white, double=black, double distance=0.4pt, line width=2.4pt] (135:5) -- (-45:5);
\draw[white, double=black, double distance=0.4pt, line width=2.4pt] 
(180:5) to[out=east, in=west](0,3) to[out=east, in=west] (-0:5);
}\ }
\tikz[baseline=-.6ex, scale=.5]{
\draw[<->, xshift=1.5cm] (1,0)--(2,0);
}
\mathord{\ \tikz[baseline=-.6ex, scale=.1, xshift=3cm]{
\draw [thin, dashed] (0,0) circle [radius=5];
\draw (-135:5) -- (45:5);
\draw [white, double=black, double distance=0.4pt, line width=2.4pt] (135:5) -- (-45:5);
\draw[white, double=black, double distance=0.4pt, line width=2.4pt] 
(180:5) to[out=east, in=west] (0,-3) to[out=east, in=west] (-0:5);
}\ },
&&\text{(R4)}~
\mathord{\ \tikz[baseline=-.6ex, scale=.1]{
\draw [thin, dashed] (0,0) circle [radius=5];
\draw (0:0) -- (90:5); 
\draw (0:0) -- (210:5); 
\draw (0:0) -- (-30:5);
\draw[white, double=black, double distance=0.4pt, line width=2.4pt]
(180:5) to[out=east, in=west] (0,3) to[out=east, in=west] (0:5);
}\ }
\tikz[baseline=-.6ex, scale=.5]{
\draw[<->, xshift=1.5cm] (1,0)--(2,0);
}
\mathord{\ \tikz[baseline=-.6ex, scale=.1]{
\draw [thin, dashed] (0,0) circle [radius=5];
\draw (0:0) -- (90:5); 
\draw (0:0) -- (210:5); 
\draw (0:0) -- (-30:5);
\draw[white, double=black, double distance=0.4pt, line width=2.4pt]
(180:5) to[out=east, in=west] (0,-3) to[out=east, in=west](0:5);
}\ }, 
\mathord{\ \tikz[baseline=-.6ex, scale=.1]{
\draw [thin, dashed] (0,0) circle [radius=5];
\draw[white, double=black, double distance=0.4pt, line width=2.4pt]
(180:5) to[out=east, in=west] (0,3) to[out=east, in=west] (0:5);
\draw[white, double=black, double distance=0.4pt, line width=2.4pt] (0:0) -- (90:5); 
\draw (0:0) -- (210:5); 
\draw (0:0) -- (-30:5);
}\ }
\tikz[baseline=-.6ex, scale=.5]{
\draw[<->, xshift=1.5cm] (1,0)--(2,0);
}
\mathord{\ \tikz[baseline=-.6ex, scale=.1]{
\draw [thin, dashed] (0,0) circle [radius=5];
\draw[white, double=black, double distance=0.4pt, ultra thick]
(180:5) to[out=east, in=west] (0,-3) to[out=east, in=west](0:5);
\draw (0:0) -- (90:5); 
\draw[white, double=black, double distance=0.4pt, line width=2.4pt] (0:0) -- (210:5); 
\draw[white, double=black, double distance=0.4pt, line width=2.4pt] (0:0) -- (-30:5);
}\ }.
\end{align*}

We review a diagrammatic definition of an $A_2$ clasp introduced in \cite{Kuperberg96, OhtsukiYamada97, Kim07} and its properties. 
The $A_2$ clasp gives a coloring of strands in a web by pairs of non-negative integers. 
It plays an important role as is the case with the Jones-Wenzl projector.

We construct a projector called the $A_2$ clasp of type $(n,m)$ in a special web space $\TL({+}^{n}{-}^{m},{+}^{n}{-}^{m})$. 

\begin{DEF}[the Temperley-Lieb category for $A_2$]
Let $D=\left[0,1\right]\times\left[0,1\right]$ and $\mathbf{n}$ denotes a set of $n$ points on $I=\left[0,1\right]$ dividing it into $n+1$ equal parts.
The {\em Temperley-Lieb category $\TL$} is a linear category over $\mathbb{C}(q^{\frac{1}{6}})$ is defined as follows:
\begin{itemize}
\item an object is a word (finite sequence) over $\{{+},{-}\}$,
\item the tensor product of two words is defined by the product (concatenation),
\item the space of morphisms $\TL(\alpha,\beta)$ is the web space $W(\bar{\alpha}\sqcup\beta;D)$ where $\bar{\alpha}$ is the word consisting of opposite signs of $\alpha$. 
\end{itemize}
We identify an object $\alpha$ with length $n$ as a map $\mathbf{n}\to\{{+},{-}\}$ by using the order on $\mathbf{n}$. 
A map $\bar{\alpha}\sqcup\beta$ means that the domain $\mathbf{n}\subset I$ of the map $\bar{\alpha}$ is identified with the top edge $\left[0,1\right]\times\{0\}$ and $\beta$ identified with the bottom edge $\left[0,1\right]\times\{1\}$. 
\begin{itemize}
\item The composition $GF\in G(\bar{\alpha}\sqcup\gamma;D)$ of $F\in G(\bar{\alpha}\sqcup\beta;D)$ and $G\in G(\bar{\beta}\sqcup\gamma;D)$ is given by gluing the top edge of $F$ and the bottom edge of $G$.
\item The ``tensor product'' $F\otimes G\in G(\bar{\alpha}_{1}\bar{\alpha}_{2},\beta_{1}\beta_{2};D)$ of $F\in G(\bar{\alpha}_1\sqcup\beta_1;D)$ and $G\in G(\alpha_2\sqcup\beta_2;D)$ by gluing the right edge $\{1\}\times\left[0,1\right]$ of $F$ and the left edge $\{0\}\times\left[0,1\right]$ of $G$.
\end{itemize}
They define the composition and the tensor product on the space of morphisms by linearization. 
The diagrammatic description of them is in Figure~{\ref{TLcat_fig}}.
\end{DEF}
\begin{figure}
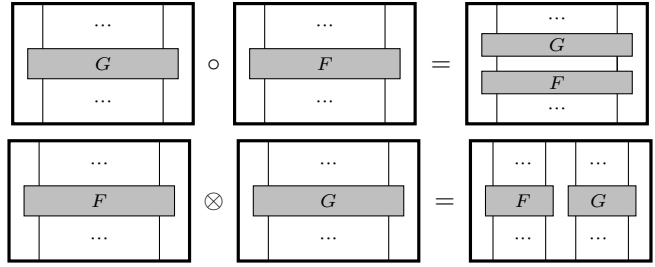

\begin{center}
\[
\mathord{\ \tikz[baseline=-.6ex, scale=.1]{
\draw (-8,8) -- (-8,-8);
\draw (8,8) -- (8,-8);
\node at (0,5) {$\scriptstyle{\cdots}$};
\node at (0,-5) {$\scriptstyle{\cdots}$};
\draw (8,-2) -- (8,2);
%%%
\draw[fill=lightgray] (-10,-2) rectangle (10,2);
\node at (0,0) {$\scriptstyle{G}$};
%%%
\draw[very thick] (-12,-8) rectangle (12,8);
}\ }
\circ
\mathord{\ \tikz[baseline=-.6ex, scale=.1]{
\draw (-8,8) -- (-8,-8);
\draw (8,8) -- (8,-8);
\node at (0,5) {$\scriptstyle{\cdots}$};
\node at (0,-5) {$\scriptstyle{\cdots}$};
\draw (8,-2) -- (8,2);
%%%
\draw[fill=lightgray] (-10,-2) rectangle (10,2);
\node at (0,0) {$\scriptstyle{F}$};
%%%
\draw[very thick] (-12,-8) rectangle (12,8);
}\ }
=
\mathord{\ \tikz[baseline=-.6ex, scale=.1]{
\draw (-8,8) -- (-8,-8);
\draw (8,8) -- (8,-8);
\node at (0,6) {$\scriptstyle{\cdots}$};
\node at (0,-6) {$\scriptstyle{\cdots}$};
\draw (8,-2) -- (8,2);
%%%
\draw[fill=lightgray] (-10,1) rectangle (10,4);
\node at (0,2.5) {$\scriptstyle{G}$};
%%%
%%%
\draw[fill=lightgray] (-10,-1) rectangle (10,-4);
\node at (0,-2.5) {$\scriptstyle{F}$};
%%%
\draw[very thick] (-12,-8) rectangle (12,8);
}\ }
\]
\[
\mathord{\ \tikz[baseline=-.6ex, scale=.1]{
\draw (-8,8) -- (-8,-8);
\draw (8,8) -- (8,-8);
\node at (0,5) {$\scriptstyle{\cdots}$};
\node at (0,-5) {$\scriptstyle{\cdots}$};
\draw (8,-2) -- (8,2);
%%%
\draw[fill=lightgray] (-10,-2) rectangle (10,2);
\node at (0,0) {$\scriptstyle{F}$};
%%%
\draw[very thick] (-12,-8) rectangle (12,8);
}\ }
\otimes
\mathord{\ \tikz[baseline=-.6ex, scale=.1]{
\draw (-8,8) -- (-8,-8);
\draw (8,8) -- (8,-8);
\node at (0,5) {$\scriptstyle{\cdots}$};
\node at (0,-5) {$\scriptstyle{\cdots}$};
\draw (8,-2) -- (8,2);
%%%
\draw[fill=lightgray] (-10,-2) rectangle (10,2);
\node at (0,0) {$\scriptstyle{G}$};
%%%
\draw[very thick] (-12,-8) rectangle (12,8);
}\ }
=
\mathord{\ \tikz[baseline=-.6ex, scale=.1]{
\draw (-9,8) -- (-9,-8);
\draw (-2,8) -- (-2,-8);
\node at (-5,5) {$\scriptstyle{\cdots}$};
\node at (-5,-5) {$\scriptstyle{\cdots}$};
\draw (9,8) -- (9,-8);
\draw (2,8) -- (2,-8);
\node at (5,5) {$\scriptstyle{\cdots}$};
\node at (5,-5) {$\scriptstyle{\cdots}$};
%%%
\draw[fill=lightgray] (-10,-2) rectangle (-1,2);
\node at (-5,0) {$\scriptstyle{F}$};
%%%
%%%
\draw[fill=lightgray] (10,-2) rectangle (1,2);
\node at (5,0) {$\scriptstyle{G}$};
%%%
\draw[very thick] (-12,-8) rectangle (12,8);
}\ }
\]
\end{center}
\caption{the composition and the tensor product in $\TL$}
\label{TLcat_fig}
\end{figure}

$\TL$ is generated by identity morphisms $\mathrm{id}_{+}\in \TL({+},{+})$, $\mathrm{id}_{-}\in \TL({-},{-})$ and the following morphisms.
\begin{align*}
& t_{-}^{++}
=
\mathord{\ \tikz[baseline=-.6ex, scale=.1]{
\draw[thick] (-8,-5) rectangle (8,5);
\draw[->-=.5] (0,0) -- (-5,5);
\draw[->-=.5] (0,0) -- (5,5);
\draw[->-=.5] (0,0) -- (0,-5);
}\ }, 
t_{+}^{--}
=
\mathord{\ \tikz[baseline=-.6ex, scale=.1]{
\draw[thick] (-8,-5) rectangle (8,5);
\draw[-<-=.5] (0,0) -- (-5,5);
\draw[-<-=.5] (0,0) -- (5,5);
\draw[-<-=.5] (0,0) -- (0,-5);
}\ }, 
t_{--}^{+}
=
\mathord{\ \tikz[baseline=-.6ex, scale=.1, yscale=-1]{
\draw[thick] (-8,-5) rectangle (8,5);
\draw[->-=.5] (0,0) -- (-5,5);
\draw[->-=.5] (0,0) -- (5,5);
\draw[->-=.5] (0,0) -- (0,-5);
}\ }, 
t_{++}^{-}
=
\mathord{\ \tikz[baseline=-.6ex, scale=.1, yscale=-1]{
\draw[thick] (-8,-5) rectangle (8,5);
\draw[-<-=.5] (0,0) -- (-5,5);
\draw[-<-=.5] (0,0) -- (5,5);
\draw[-<-=.5] (0,0) -- (0,-5);
}\ },\\
&b^{-+}
=
\mathord{\ \tikz[baseline=-.6ex, scale=.1]{
\draw[thick] (-8,-5) rectangle (8,5);
\draw[->-=.5] (-5,5) to[out=south, in=west] (0,1) to[out=east, in=south] (5,5);
}\ }, 
b^{+-}
=
\mathord{\ \tikz[baseline=-.6ex, scale=.1]{
\draw[thick] (-8,-5) rectangle (8,5);
\draw[-<-=.5] (-5,5) to[out=south, in=west] (0,1) to[out=east, in=south] (5,5);
}\ }, 
d_{+-}
=
\mathord{\ \tikz[baseline=-.6ex, scale=.1, yscale=-1]{
\draw[thick] (-8,-5) rectangle (8,5);
\draw[->-=.5] (-5,5) to[out=south, in=west] (0,1) to[out=east, in=south] (5,5);
}\ }, 
d_{-+}
=
\mathord{\ \tikz[baseline=-.6ex, scale=.1, yscale=-1]{
\draw[thick] (-8,-5) rectangle (8,5);
\draw[-<-=.5] (-5,5) to[out=south, in=west] (0,1) to[out=east, in=south] (5,5);
}\ }. 
\end{align*}

Let us define one colored $A_2$ clasp $\JW{m}{0}$ before defining $\JW{m}{n}$. 
We do not describe the boundary of $D$ in what follows.

\begin{DEF}[The $A_2$ clasp in $\TL({+}^{m},{+}^{m})$]\label{sclasp}
The $A_{2}$ clasp $\JW{m}{0}$ described by a white box with $m\in\mathbb{Z}_{{}\geq 0}$ is defined as follows.
\begin{enumerate}
\item
$\mathord{\ \tikz[baseline=-.6ex, scale=.1]{
\draw[->-=.8] (0,-6) -- (0,6);
\draw[fill=white] (-3,-1) rectangle (3,1);
\node at (0,5) [left]{${\scriptstyle 0}$};
}\ }
=\emptyset$ (the empty diagram),
$\mathord{\ \tikz[baseline=-.6ex, scale=.1]{
\draw[->-=.8] (0,-6) -- (0,6);
\draw[fill=white] (-3,-1) rectangle (3,1);
\node at (0,5) [left]{${\scriptstyle 1}$};
}\ }
=
\,\tikz[baseline=-.6ex, scale=.1]{
\draw[->-=.5] (0,-6) -- (0,6); 
}\,
=\mathrm{id}_{+}\in \TL({+},{+})$

\item
$\mathord{\ \tikz[baseline=-.6ex, scale=.1]{
\draw[->-=.8] (0,-6) -- (0,6);
\draw[fill=white] (-3,-1) rectangle (3,1);
\node at (0,5) [left]{${\scriptstyle m+1}$};
}\ }
= 
\mathord{\ \tikz[baseline=-.6ex, scale=.1]{
\draw[->-=.8] (0,-6) -- (0,6);
\draw[->-=.5] (5,-6) -- (5,6);
\draw[fill=white] (-2,-1) rectangle (2,1);
\node at (0,5) [left]{${\scriptstyle m}$};
}\ }
-\frac{\left[n\right]}{\left[n+1\right]}
\mathord{\ \tikz[baseline=-.6ex, scale=.1]{
\draw[->-=.6] (0,4) -- (0,7);
\draw[->-=.6] (0,-7) -- (0,-4);
\draw[->-=.5] (-2,-3) -- (-2,3);
\draw[-<-=.7] (5,-1) -- (5,1);
\draw[-<-=.7] (2,3) to[out=south, in=west] (5,1);
\draw[-<-=.3] (8,7) -- (8,3) to[out=south, in=east] (5,1);
\draw[->-=.7, yscale=-1] (2,3) to[out=south, in=west] (5,1);
\draw[->-=.3, yscale=-1] (8,7) -- (8,3) to[out=south, in=east] (5,1);
\draw[fill=white] (-3,3) rectangle (3,4);
\draw[fill=white] (-3,-3) rectangle (3,-4);
\node at (0,6) [left]{${\scriptstyle m}$};
\node at (0,-6) [left]{${\scriptstyle m}$};
\node at (-2,0) [left]{${\scriptstyle m-1}$};
}\ }\in\TL({+}^{n+1},{+}^{n+1})$
\end{enumerate}
The $A_2$ clasp in $\JW{0}{m}\in\TL({-}^m,{-}^m)$ is also defined in the same way.
\end{DEF}
We introduce the $A_2$ clasp $\JW{m}{n}$ in $\TL({+}^{m}{-}^{n},{+}^{m}{-}^{n})$ based on~\cite{OhtsukiYamada97}.
\begin{DEF}[The $A_2$ clasp in $\TL({+}^{m}{-}^{n},{+}^{m}{-}^{n})$]\label{dclasp}
\[
\JW{m}{n}
=
\mathord{\ \tikz[baseline=-.6ex, scale=.1]{
\draw[->-=.2, ->-=.8] (-2,-5) -- (-2,5);
\draw[-<-=.2, -<-=.8] (2,-5) -- (2,5);
\draw[fill=white] (-4,-1) rectangle (4,1);
\node at (-2,5) [left]{${\scriptstyle m}$};
\node at (-2,-5) [left]{${\scriptstyle m}$};
\node at (2,5) [right]{${\scriptstyle n}$};
\node at (2,-5) [right]{${\scriptstyle n}$};
}\ }
=
\sum_{i=0}^{\min\{m,n\}}
(-1)^i
\frac{{m\brack i}{n\brack i}}{{m+n+1\brack i}}
\mathord{\ \tikz[baseline=-.6ex, scale=.1]{
\draw[->-=.5] (-4,5) -- (-4,8);
\draw[-<-=.5] (4,5) -- (4,8);
\draw[-<-=.5, yscale=-1] (-4,5) -- (-4,8);
\draw[->-=.5, yscale=-1] (4,5) -- (4,8);
\draw[->-=.5] (-5,-4) -- (-5,4);
\draw[-<-=.5] (5,-4) -- (5,4);
\draw[-<-=.5] (-3,4) to[out=south, in=south] (3,4);
\draw[->-=.5, yscale=-1] (-3,4) to[out=south, in=south] (3,4);
\draw[fill=white] (-6,4) rectangle (-2,5);
\draw[fill=white, xscale=-1] (-6,4) rectangle (-2,5);
\draw[fill=white, yscale=-1] (-6,4) rectangle (-2,5);
\draw[fill=white, xscale=-1, yscale=-1] (-6,4) rectangle (-2,5);
\node at (-5,0) [left]{${\scriptstyle m-i}$};
\node at (5,0) [right]{${\scriptstyle n-i}$};
\node at (0,5) {${\scriptstyle i}$};
\node at (0,-5) {${\scriptstyle i}$};
\node at (-4,7) [left]{${\scriptstyle m}$};
\node at (4,7) [right]{${\scriptstyle n}$};
\node at (-4,-7) [left]{${\scriptstyle m}$};
\node at (4,-7) [right]{${\scriptstyle n}$};
}\ }.\]
\end{DEF}

The $A_2$ clasp has the following well-known properties
\begin{PROP}
Let $m$, $n$ are non-negative integers.
\begin{enumerate}
\item $\left(\JW{k}{l}\right)\left(\JW{m}{n}\right)=\left(\JW{m}{n}\right)\left(\JW{k}{l}\right)=\JW{m}{n}$ for $0\leq k\leq m$ and $0\leq l\leq n$.
\item $F\left(\JW{m}{n}\right)=0$ if $F\in\{t_{--}^{+}, t_{++}^{-}, d_{+-}, d_{-+}\}$.
\item $\left(\JW{m}{n}\right)F=0$ if $F\in\{t_{-}^{++}, t_{+}^{--}, b^{-+}, b^{+-}\}$.
\end{enumerate}
In the above, we omit tensor components of identity morphisms in $F$ and $\JW{k}{l}$.
\end{PROP}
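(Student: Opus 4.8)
The plan is to deduce the idempotence-and-absorption statement (1) from the two annihilation statements (2) and (3), and to prove (2) and (3) by first settling the one-color clasps $\JW{m}{0}$ and $\JW{0}{n}$ and then bootstrapping to $\JW{m}{n}$ through the explicit expansion of Definition~\ref{dclasp}. The starting observation is structural: expanding $\JW{m}{n}$ by Definition~\ref{dclasp}, and expanding the one-color clasps inside each summand by the recursion of Definition~\ref{sclasp}, writes $\JW{m}{n}=\mathrm{id}_{{+}^{m}{-}^{n}}+X$, where every term of $X$ factors, along its bottom edge, through one of the four turn-backs $t_{--}^{+},t_{++}^{-},d_{+-},d_{-+}$ (the $i=0$ summand contributes the identity together with the one-color turn-backs, while each $i\ge 1$ bridge summand presents a mixed cap $d_{\pm\mp}$ on its innermost opposite-signed strands). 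Granting (2) and (3), statement (1) is then immediate: for $(\JW{k}{l})(\JW{m}{n})$ I would write the upper clasp as $\mathrm{id}+X'$, observe that each term of $X'$ presents a turn-back directly above $\JW{m}{n}$, and kill it by (2); the surviving identity term returns $\JW{m}{n}$. The reversed product uses (3), and idempotence is the case $k=m$, $l=n$.

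For the one-color clasp I would argue by induction on $m$ using the recursion of Definition~\ref{sclasp}, exactly as in the Jones--Wenzl case. Writing $U_i$ for the turn-back that merges the adjacent strands $i,i+1$ through a trivalent vertex, the base cases $m=0,1$ are vacuous, and in the inductive step the $U_i$ with $i\le m-1$ are annihilated because they already meet the clasp $\JW{m}{0}$ sitting inside the recursion. The only case needing computation is the last turn-back $U_m$: here I would evaluate $(\JW{m}{0}\otimes\mathrm{id}_{+})U_m$ and the turn-back term of the recursion separately, using the bubble and triangle evaluations of Definition~\ref{A2skein} together with the one-strand partial trace of $\JW{m-1}{0}$; the coefficient recorded in Definition~\ref{sclasp} is precisely the ratio of quantum integers that makes these two contributions cancel. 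This yields annihilation by every single-color turn-back, and the same diagrammatic computation gives the one-color absorption and idempotence.

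With the one-color case in hand, the same-color turn-backs are immediate: in the expansion of $\JW{m}{n}$ from Definition~\ref{dclasp} every summand carries a one-color clasp on each of its four boundary blocks, so pre- or post-composing with $t_{++}^{-}$ or $t_{--}^{+}$ annihilates that summand by the one-color result, and the top–bottom reflection symmetry of the defining formula passes between (2) and (3). The genuinely new point is the mixed turn-back $d_{+-}$ (and, symmetrically, $d_{-+}$), which caps the two innermost opposite-signed strands and thereby bridges the ${+}^{m}$- and ${-}^{n}$-clasps of each summand. I would compute the effect of this cap on the $i$-th summand by absorbing it into the two adjacent one-color clasps and contracting the resulting bubbles via Definition~\ref{A2skein}; each summand then collapses to one fixed smaller diagram multiplied by an explicit product of quantum integers coming from the partial traces, times the coefficient ${m\brack i}{n\brack i}/{m+n+1\brack i}$. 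Annihilation then amounts to showing that the resulting alternating sum over $i$ vanishes.

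I expect this last cancellation to be the main obstacle: it is a single identity among the $q$-binomial coefficients ${m\brack i}$, ${n\brack i}$, ${m+n+1\brack i}$ and the partial-trace quantum integers, an identity that the coefficients of Definition~\ref{dclasp} were engineered to satisfy. The cleanest route is to rewrite the summand coefficients so that consecutive terms telescope, reducing the vanishing to a Pascal-type recursion for quantum binomials; once the telescoping is arranged the remaining steps are the routine $q$-integer manipulations underlying the $A_2$ clasp, and the same computation read upside down, with $+\leftrightarrow -$ and $m\leftrightarrow n$ exchanged, disposes of $d_{-+}$ and completes the proof of (2) and (3).
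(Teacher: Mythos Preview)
The paper does not actually prove this proposition: it is introduced with the phrase ``The $A_2$ clasp has the following well-known properties'' and no proof environment follows. The properties are taken as known from the references \cite{Kuperberg96, OhtsukiYamada97, Kim07} cited at the beginning of the section, so there is nothing on the paper's side to compare against.

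That said, your outline is the standard route and is sound. Reducing (1) to (2)--(3) via the decomposition $\JW{m}{n}=\mathrm{id}+X$ with $X$ factoring through turn-backs is correct, the inductive argument for the one-colour clasp is exactly the Jones--Wenzl argument transported to $A_2$, and you correctly isolate the only nontrivial point: the mixed cap $d_{+-}$ acting on the expansion of Definition~\ref{dclasp}. The vanishing of that alternating sum is precisely the identity that fixes the coefficients $(-1)^{i}{m\brack i}{n\brack i}/{m+n+1\brack i}$ in Ohtsuki--Yamada's construction; you do not carry it out, but your description of how to attack it (rewrite so that consecutive terms telescope via the quantum Pascal rule) is the right one, and the details appear in \cite{OhtsukiYamada97} and \cite{Kim07}. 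So your proposal is correct and more detailed than what the paper itself supplies.
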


Next, we construct a special web ${JW}_{\alpha}^{\beta}\in\TL(\alpha,\beta)$  which has a similar properties to the $A_2$ clasp where $\alpha$ and $\beta$ are obtained by rearranging the order of ${+}^{m}{-}^{n}$.

Let us introduce two basis webs called {\em $H$-webs}:
\[
H_{{+}{-}}^{{-}{+}}
=
\mathord{\ \tikz[baseline=-.6ex, scale=.1]{
\draw[->-=.3,-<-=.8] (0,-4) -- (0,4);
\draw[-<-=.3,->-=.8] (5,-4) -- (5,4);
\draw[-<-=.5] (0,0) -- (5,0);
}\ }
,
\quad H_{{-}{+}}^{{+}{-}}
=
\mathord{\ \tikz[baseline=-.6ex, scale=.1]{
\draw[-<-=.3,->-=.8] (0,-4) -- (0,4);
\draw[->-=.3,-<-=.8] (5,-4) -- (5,4);
\draw[->-=.5] (0,0) -- (5,0);
}\ }.
\]

Let $\alpha\in\TL$ be a object obtained by rearranging the order of ${+}^{m}{-}^{n}$.
Then, one can construct a morphism from ${+}^{m}{-}^{n}$ to $\alpha$ by composing $H$-webs.
$\sigma(\alpha)$ denotes such a web minimizing the number of $H$-webs and it is uniquely determined.
In the same way, 
one can construct the morphism from $\alpha$ to ${+}^{m}{-}^{n}$ by pre-composing $H$-webs and denote it by $\tau(\alpha)$.
We remark that $\sigma(\alpha)$ (resp. $\tau(\alpha)$) does not contain $H_{-+}^{+-}$ (resp.$H_{+-}^{-+}$).
\begin{DEF}
Let $\alpha$ and $\beta$ be words obtained by rearranging ${+}^{m}{-}^{n}$. 
We define the following webs:
\begin{align*}
\JW{m}{n}^{\alpha}&=\sigma(\alpha)\left(\JW{m}{n}\right)\in\TL({+}^{m}{-}^{n},\alpha), \\
{JW}_{\beta}^{{+}^{m}{-}^{n}}&=\left(\JW{m}{n}\right)\tau(\beta)\in\TL(\beta,{+}^{m}{-}^{-}), \\
{JW}_{\beta}^{\alpha}&=\JW{m}{n}^{\alpha}{JW}_{\beta}^{{+}^{m}{-}^{n}}\in\TL(\beta,\alpha).
\end{align*}
Let us describe ${JW}_{\beta}^{\alpha}$ diagrammatically by $\mathrm{id}_{\beta}\circ(\text{the white box})\circ\mathrm{id}_{\alpha}$.
\end{DEF}
\begin{LEM}\label{vanish}
$F\left(\JW{m}{n}^{\alpha}\right)=0$ if $F\in\{t_{--}^{+}, t_{++}^{-}, d_{+-}, d_{-+}\}$ and $\left({JW}_{\beta}^{{+}^{m}{-}^{n}}\right)F=0$ if $F\in\{t_{-}^{++}, t_{+}^{--}, b^{-+}, b^{+-}\}$.
\end{LEM}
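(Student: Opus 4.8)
The plan is to prove the first assertion, $F\bigl(\JW{m}{n}^{\alpha}\bigr)=0$ for $F\in\{t_{--}^{+},t_{++}^{-},d_{+-},d_{-+}\}$, and to deduce the second from it by the top–bottom reflection of $\TL$. This reflection reverses composition, fixes $\JW{m}{n}$ by the symmetry of its defining recursion (Definitions~\ref{sclasp} and~\ref{dclasp}), sends $\sigma(\alpha)$ to $\tau(\alpha)$, and carries the four ``upper'' morphisms $\{t_{--}^{+},t_{++}^{-},d_{+-},d_{-+}\}$ onto the four ``lower'' ones $\{t_{-}^{++},t_{+}^{--},b^{-+},b^{+-}\}$. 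Applying it to $F\bigl(\JW{m}{n}^{\alpha}\bigr)=0$ turns $\JW{m}{n}^{\alpha}=\sigma(\alpha)\bigl(\JW{m}{n}\bigr)$ into ${JW}_{\alpha}^{{+}^{m}{-}^{n}}=\bigl(\JW{m}{n}\bigr)\tau(\alpha)$ and produces exactly $\bigl({JW}_{\beta}^{{+}^{m}{-}^{n}}\bigr)F=0$, so it suffices to treat the first identity.

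I would argue by induction on the number of $H$-webs appearing in $\sigma(\alpha)$. For the base case $\alpha={+}^{m}{-}^{n}$ we have $\sigma(\alpha)=\mathrm{id}$ and $\JW{m}{n}^{\alpha}=\JW{m}{n}$, so the claim is precisely the clasp property $F\bigl(\JW{m}{n}\bigr)=0$ already recorded for these four morphisms. For the inductive step, write $\sigma(\alpha)=H\circ\sigma(\alpha')$, where $H$ is the topmost $H_{{+}{-}}^{{-}{+}}$ web at some adjacent pair of positions (recall $\sigma(\alpha)$ contains no $H_{{-}{+}}^{{+}{-}}$), and $\alpha'$ is the one–swap–shorter word, with $\sigma(\alpha')$ again the minimal word by uniqueness of reduced words. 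Then $F\bigl(\JW{m}{n}^{\alpha}\bigr)=FH\,\JW{m}{n}^{\alpha'}$, and the objective is to rewrite $FH$ so that some morphism from the same list acts directly on the boundary $\alpha'$; the inductive hypothesis then kills the term.

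The rewriting splits according to how the support of $F$ meets the two top legs of $H$. If they are disjoint, $F$ commutes past $H$ by planar isotopy and becomes one of the four upper morphisms on $\alpha'$, so induction applies directly. If $F$ caps exactly the two outputs of $H$ (the case $F=d_{-+}$), then the cap together with the bar and the upper legs of $H$ bounds a $2$-gon, and the bigon relation of Definition~\ref{A2skein} collapses it to $[2]$ times the turnback $d_{+-}$ on the two bottom legs of $H$; hence $FH=[2]\,d_{+-}$ acting on $\alpha'$, and $d_{+-}\bigl(\JW{m}{n}^{\alpha'}\bigr)=0$ by induction. In the remaining cases $F$ shares a single leg with $H$ (one output of $H$ is capped or merged with a neighboring strand): here I would apply move (R4) of Definition~\ref{A2skein} to slide that cap or trivalent vertex across the nearby vertex of $H$, pushing $F$ below $H$ into an upper morphism on $\alpha'$, and again invoke the inductive hypothesis.

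The main obstacle is precisely this last family of overlapping cases: tracking the orientations and deciding which adjacent pair the pushed-down cap or merge occupies while sliding through the single vertex of $H$, so as to certify that the resulting morphism really is one of the four listed upper morphisms for the word $\alpha'$ rather than an orientation-incompatible configuration. Once the local reductions — the bigon collapse and the (R4)-slides — are checked for each of the four $F$'s against $H_{{+}{-}}^{{-}{+}}$, every term obtained by commuting $F$ to the bottom carries a factor $F'\bigl(\JW{m}{n}^{\alpha'}\bigr)$ with $F'$ upper, which vanishes by induction, completing the argument.
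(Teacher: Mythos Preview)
Your induction on the number of $H$-webs and the handling of the disjoint and full-overlap (bigon) cases are correct and agree with the paper. The gap is in the single-leg overlap case, and it is a real one.

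First, (R4) is irrelevant here: it is the Reidemeister move for passing a \emph{crossing} through a trivalent vertex, and there are no crossings anywhere in $F$, $H$, or $\sigma(\alpha)$. There is no skein relation that ``slides one trivalent vertex through another.''

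Second, and more seriously, when $F\in\{t_{++}^{-},t_{--}^{+}\}$ shares exactly one leg with the top $H$, the local web $FH$ cannot be pushed down to an upper morphism on $\alpha'$ at all. Concretely (type~II, $F=t_{++}^{-}$ on the right): restricted to the three relevant strands of $\alpha'$ (with signs $+,-,+$), the composite $FH$ is the three-vertex tree $H^{L}\text{--}H^{R}\text{--}t$, a basis web with no $0$-, $2$-, or $4$-gons. Any factorisation $G\circ F'$ with $F'\in\{d_{+-},d_{-+}\}$ on these strands is a different basis web, and there is no adjacent $++$ or $--$ pair available to accept $t_{++}^{-}$ or $t_{--}^{+}$. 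So no rewriting of $FH$ alone produces an upper morphism on $\alpha'$. (For the cap cases $F\in\{d_{+-},d_{-+}\}$ your intuition is essentially right: a planar \emph{isotopy}, not (R4), bends one leg of $H$ so that an upper $t$-morphism appears on $\alpha'$; this is why the paper calls type~I ``clear.'')

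The paper's fix is to look one layer deeper. In type~II the signs $\alpha'=\cdots{+}{+}{-}{+}\cdots$ force $\sigma(\alpha')$ to contain a second $H_{+-}^{-+}$ at the adjacent pair; this second $H$, together with $F$ and the first $H$, bounds a genuine square, and the square relation of Definition~\ref{A2skein} splits it into two terms, each carrying an upper morphism directly on some $\sigma''$ with $h(\sigma'')=h(\sigma)-2$. Types~III and~IV are handled the same way. That extra step is what your argument is missing.
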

\begin{proof}
We only show the first case and denote $\sigma(\alpha)$ by $\sigma$ for simplicity
We give a proof by induction on the number $h(\sigma)$ of $H_{+-}^{-+}$ contained in $\sigma$.
If $h(\sigma)=0$, it is clear since $\JW{m}{n}^{\alpha}=\JW{m}{n}$. 
If $h(\sigma)=1$, then $\sigma=\mathrm{id}_{{+}^{m-1}}\otimes H_{+-}^{-+}\otimes\mathrm{id}_{{-}^{n-1}}$ and one can prove easily.
When $h(\sigma)=k+1$, 
we choose $\alpha'\in\TL$ such that $\sigma(\alpha)=\left(\mathrm{id}\otimes H_{+-}^{-+}\otimes\mathrm{id}\right)\sigma(\alpha')$.
Then,
$\sigma$ is one of the following webs:
\begin{align*}
&\text{\bf type~{I}:}\quad
\mathord{\ \tikz[baseline=-.6ex, scale=.1]{
\draw (10,0) -- (10,6);
\draw (-10,0) -- (-10,6);
\draw[-<-=.5] (6,0) -- (6,6);
\draw[->-=.5] (-6,0) -- (-6,6);
\draw[->-=.3,-<-=.8] (-4,0) -- (-4,6);
\draw[-<-=.3,->-=.8] (4,0) -- (4,6);
\draw[-<-=.5] (-4,3) -- (4,3);
\draw[fill=lightgray] (-12,0) rectangle (12,-4);
\node at (8,5) {$\scriptstyle{\cdots}$};
\node at (-8,5) {$\scriptstyle{\cdots}$};
\node at (0,-2) {$\scriptstyle{\sigma'}$};
}\ },
&&
\text{\bf type~{II}:}\quad
\mathord{\ \tikz[baseline=-.6ex, scale=.1]{
\draw (10,0) -- (10,6);
\draw (-10,0) -- (-10,6);
\draw[->-=.5] (6,0) -- (6,6);
\draw[->-=.5] (-6,0) -- (-6,6);
\draw[->-=.3,-<-=.8] (-4,0) -- (-4,6);
\draw[-<-=.3,->-=.8] (4,0) -- (4,6);
\draw[-<-=.5] (-4,3) -- (4,3);
\draw[fill=lightgray] (-12,0) rectangle (12,-4);
\node at (8,5) {$\scriptstyle{\cdots}$};
\node at (-8,5) {$\scriptstyle{\cdots}$};
\node at (0,-2) {$\scriptstyle{\sigma'}$};
}\ },\\
&
\text{\bf type~{III}:}\quad
\mathord{\ \tikz[baseline=-.6ex, scale=.1]{
\draw (10,0) -- (10,6);
\draw (-10,0) -- (-10,6);
\draw[->-=.5] (6,0) -- (6,6);
\draw[-<-=.5] (-6,0) -- (-6,6);
\draw[->-=.3,-<-=.8] (-4,0) -- (-4,6);
\draw[-<-=.3,->-=.8] (4,0) -- (4,6);
\draw[-<-=.5] (-4,3) -- (4,3);
\draw[fill=lightgray] (-12,0) rectangle (12,-4);
\node at (8,5) {$\scriptstyle{\cdots}$};
\node at (-8,5) {$\scriptstyle{\cdots}$};
\node at (0,-2) {$\scriptstyle{\sigma'}$};
}\ },
&&
\text{\bf type~{IV}:}\quad
\mathord{\ \tikz[baseline=-.6ex, scale=.1]{
\draw (10,0) -- (10,6);
\draw (-10,0) -- (-10,6);
\draw[-<-=.5] (6,0) -- (6,6);
\draw[-<-=.5] (-6,0) -- (-6,6);
\draw[->-=.3,-<-=.8] (-4,0) -- (-4,6);
\draw[-<-=.3,->-=.8] (4,0) -- (4,6);
\draw[-<-=.5] (-4,3) -- (4,3);
\draw[fill=lightgray] (-12,0) rectangle (12,-4);
\node at (8,5) {$\scriptstyle{\cdots}$};
\node at (-8,5) {$\scriptstyle{\cdots}$};
\node at (0,-2) {$\scriptstyle{\sigma'}$};
}\ },
\end{align*}
where $\sigma'=\sigma(\alpha')$.
Because of $h(\sigma')=k$, 
$\JW{m}{n}^{\alpha'}$ satisfies the statement in Lemma~\ref{vanish} by the induction hypothesis.
We only need to calculate a web obtained by multiplication of $F$ at the part we pictured in the above.

In the case of type~{I}, the possibility of $F$ is $d_{+-}$ or $d_{-+}$. 
Then, the vanishing of $F\left(\sigma\JW{m}{n}\right)$ is clear.

In the case of type~{II}, 
it is similar to type~{I} if $F=d_{+-}$ or $d_{-+}$.
We only have to consider $F=t_{++}^{-}$, that is, show
\[
\mathord{\ \tikz[baseline=-.6ex, scale=.1]{
\draw (4,6) -- (6,6);
\draw[-<-=.7] (5,6) -- (5,9);
\draw (10,0) -- (10,6);
\draw (-10,0) -- (-10,6);
\draw[->-=.5] (6,0) -- (6,6);
\draw[->-=.5] (-6,0) -- (-6,6);
\draw[->-=.3,-<-=.8] (-4,0) -- (-4,6);
\draw[-<-=.3,->-=.8] (4,0) -- (4,6);
\draw[-<-=.5] (-4,3) -- (4,3);
\draw[fill=lightgray] (-12,0) rectangle (12,-4);
\node at (8,5) {$\scriptstyle{\cdots}$};
\node at (-8,5) {$\scriptstyle{\cdots}$};
\node at (0,-2) {$\scriptstyle{\sigma'}$};
}\ }\circ\JW{m}{n}=0.
\]
The construction of $\sigma$ and $\alpha'=\cdots{{+}{+}{-}{+}}\cdots$ say that there exists $\sigma''$ such that
\[
\mathord{\ \tikz[baseline=-.6ex, scale=.1]{
\draw (4,6) -- (6,6);
\draw[-<-=.7] (5,6) -- (5,9);
%%%%%
\draw (10,0) -- (10,6);
\draw (-10,0) -- (-10,6);
\draw[->-=.5] (6,0) -- (6,6);
\draw[->-=.5] (-6,0) -- (-6,6);
\draw[->-=.3,-<-=.8] (-4,0) -- (-4,6);
\draw[-<-=.3,->-=.8] (4,0) -- (4,6);
\draw[-<-=.5] (-4,3) -- (4,3);
\draw[fill=lightgray] (-12,-2) rectangle (12,-6);
%%%%%
\draw (10,0) -- (10,-2);
\draw (-10,0) -- (-10,-2);
\draw (6,0) -- (6,-2);
\draw (-6,0) -- (-6,-2);
\draw (-4,0) -- (-4,-2);
\draw (4,0) -- (4,-2);
\draw[-<-=.8] (4,0) -- (6,0);
\node at (8,5) {$\scriptstyle{\cdots}$};
\node at (-8,5) {$\scriptstyle{\cdots}$};
\node at (0,-4) {$\scriptstyle{\sigma''}$};
}\ }
=
\mathord{\ \tikz[baseline=-.6ex, scale=.1]{
%%%%%
\draw (10,0) -- (10,6);
\draw (-10,0) -- (-10,6);
\draw[-<-=.5] (6,0) -- (6,6);
\draw[->-=.5] (-6,0) -- (-6,6);
\draw[->-=.3,-<-=.8] (-4,0) -- (-4,6);
\draw (4,0) -- (4,3);
\draw[-<-=.5] (-4,3) -- (4,3);
\draw[fill=lightgray] (-12,-2) rectangle (12,-6);
%%%%%
\draw (10,0) -- (10,-2);
\draw (-10,0) -- (-10,-2);
\draw (6,0) -- (6,-2);
\draw (-6,0) -- (-6,-2);
\draw (-4,0) -- (-4,-2);
\draw (4,0) -- (4,-2);
\node at (8,5) {$\scriptstyle{\cdots}$};
\node at (-8,5) {$\scriptstyle{\cdots}$};
\node at (0,-4) {$\scriptstyle{\sigma''}$};
}\ }
+
\mathord{\ \tikz[baseline=-.6ex, scale=.1]{
%%%%%
\draw (10,0) -- (10,6);
\draw (-10,0) -- (-10,6);
\draw[->-=.5] (-6,0) -- (-6,6);
\draw[->-=.3,-<-=.8] (-4,0) -- (-4,6);
\draw (4,3) -- (4,6);
\draw[-<-=.5] (-4,3) -- (4,3);
\draw[fill=lightgray] (-12,-2) rectangle (12,-6);
%%%%%
\draw (10,0) -- (10,-2);
\draw (-10,0) -- (-10,-2);
\draw (6,0) -- (6,-2);
\draw (-6,0) -- (-6,-2);
\draw (-4,0) -- (-4,-2);
\draw (4,0) -- (4,-2);
\draw[-<-=.8] (4,0) -- (6,0);
\node at (8,5) {$\scriptstyle{\cdots}$};
\node at (-8,5) {$\scriptstyle{\cdots}$};
\node at (0,-4) {$\scriptstyle{\sigma''}$};
}\ }
\]
The first term of the RHS has $t_{{+}{+}}^{-}$ and the second $d_{{-}{+}}$ on the top of $\sigma''$. 
Because of $h(\sigma'')=k-1$ and the induction hypothesis, 
the composition of the RHS with $\JW{m}{n}$ vanish.
One can prove type~{III} and type~{IV} in the same way.
\end{proof}
\begin{LEM}
Let $\alpha$, $\beta$, and $\gamma$ in $\TL$ be obtained by rearranging ${+}^{m}{-}^{n}$.
Then, ${JW}_{\beta}^{\gamma}{JW}_{\alpha}^{\beta}={JW}_{\alpha}^{\gamma}$. 
\end{LEM}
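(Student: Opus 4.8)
The plan is to unwind the three webs using idempotency of the clasp, reduce the whole identity to a single absorption statement about the standard clasp $\JW{m}{n}$, and then prove that statement by induction on the number of $H$-webs. Using part (1) of the Proposition on $A_2$ clasps, namely the idempotency $\left(\JW{m}{n}\right)\left(\JW{m}{n}\right)=\JW{m}{n}$, I would first rewrite
\[
{JW}_{\beta}^{\gamma}=\sigma(\gamma)\left(\JW{m}{n}\right)\tau(\beta),\qquad {JW}_{\alpha}^{\beta}=\sigma(\beta)\left(\JW{m}{n}\right)\tau(\alpha),\qquad {JW}_{\alpha}^{\gamma}=\sigma(\gamma)\left(\JW{m}{n}\right)\tau(\alpha).
\]
Composing the first two gives ${JW}_{\beta}^{\gamma}{JW}_{\alpha}^{\beta}=\sigma(\gamma)\left(\JW{m}{n}\right)\tau(\beta)\sigma(\beta)\left(\JW{m}{n}\right)\tau(\alpha)$, so it suffices to prove the key identity
\[
\left(\JW{m}{n}\right)\tau(\beta)\sigma(\beta)\left(\JW{m}{n}\right)=\JW{m}{n}\qquad(\star)
\]
for every rearrangement $\beta$ of ${+}^{m}{-}^{n}$; substituting $(\star)$ into the middle collapses the composite to $\sigma(\gamma)\left(\JW{m}{n}\right)\tau(\alpha)={JW}_{\alpha}^{\gamma}$.

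Next I would prove $(\star)$ by induction on the number $h$ of $H_{{+}{-}}^{{-}{+}}$-webs in $\sigma(\beta)$. For $h=0$ we have $\beta={+}^{m}{-}^{n}$ and $\sigma(\beta)=\tau(\beta)=\mathrm{id}$, so the left side is $\left(\JW{m}{n}\right)\left(\JW{m}{n}\right)=\JW{m}{n}$. For the step, I peel off the topmost $H$-web exactly as in the proof of Lemma~\ref{vanish}: choose $\beta'$ with $\sigma(\beta)=\left(\mathrm{id}\otimes H_{{+}{-}}^{{-}{+}}\otimes\mathrm{id}\right)\sigma(\beta')$, so that $\beta'$ carries the pattern ${+}{-}$ at the active position; by the mirror symmetry between $\sigma$ and $\tau$ this forces $\tau(\beta)=\tau(\beta')\left(\mathrm{id}\otimes H_{{-}{+}}^{{+}{-}}\otimes\mathrm{id}\right)$ at the same position. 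Hence
\[
\tau(\beta)\sigma(\beta)=\tau(\beta')\left(\mathrm{id}\otimes H_{{-}{+}}^{{+}{-}}H_{{+}{-}}^{{-}{+}}\otimes\mathrm{id}\right)\sigma(\beta'),
\]
and applying the square (4-gon) relation of Definition~\ref{A2skein} to the two stacked $H$-webs yields $H_{{-}{+}}^{{+}{-}}H_{{+}{-}}^{{-}{+}}=\mathrm{id}_{{+}{-}}+b^{+-}d_{+-}$, the identity term coming from the through-strand resolution and the error term $E=b^{+-}d_{+-}$ from the turn-back resolution.

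Finally I would expand $(\star)$ into two summands. The $\mathrm{id}_{{+}{-}}$ contribution is $\left(\JW{m}{n}\right)\tau(\beta')\sigma(\beta')\left(\JW{m}{n}\right)$, which equals $\JW{m}{n}$ by the induction hypothesis because $\sigma(\beta')$ contains $h-1$ $H$-webs. For the $E$ contribution, the cup $d_{+-}$ meets the ${+}{-}$ pair at the active position of $\beta'$, so the relevant factor is $\left(\mathrm{id}\otimes d_{+-}\otimes\mathrm{id}\right)\sigma(\beta')\left(\JW{m}{n}\right)=d_{+-}\left(\JW{m}{n}^{\beta'}\right)$, which vanishes by Lemma~\ref{vanish}; thus the whole $E$-term is zero and $(\star)$ follows. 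I expect the main obstacle to be the bookkeeping in the inductive step: confirming that removing the top $H_{{+}{-}}^{{-}{+}}$ from $\sigma(\beta)$ really produces the matching $H_{{-}{+}}^{{+}{-}}$ at the bottom of $\tau(\beta)$ in the same tensor slot, and that after the square relation the surviving turn-back presents exactly a cup in $\{d_{+-},d_{-+}\}$ to the clasp, so that Lemma~\ref{vanish} applies. Getting these orientations and positions to line up is the delicate part, whereas the algebraic reduction and the induction itself are routine.
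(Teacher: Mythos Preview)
Your proof is correct and follows essentially the same approach as the paper. The paper's proof is a single sentence---``It is easy to see by Lemma~\ref{vanish} because $\tau(\beta)$ is the reflection of $\sigma(\beta)$ at the horizontal line with the oppositely directed edges''---and your argument is precisely the induction that unpacks this hint: the reflection property gives the matching $H_{{-}{+}}^{{+}{-}}$ at the bottom of $\tau(\beta)$, the square relation produces the identity plus a turn-back, and Lemma~\ref{vanish} kills the turn-back.
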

\begin{proof}
It is easy to see by Lemma~\ref{vanish} because $\tau(\beta)$ is the reflection of $\sigma(\beta)$ at the horizontal line with the oppositely directed edges.
\end{proof}

Let us introduce two types of a web, ``stair-step'' and ``triangle''.
These webs also appear in \cite{Kim06, Kim07, Yuasa17, FrohmanSikora20A}.
\begin{DEF}
For positive integers $n$ and $m$, 
$
\mathord{\ \tikz[baseline=-.6ex, scale=.1]{
\draw (-4,0) -- (-2,0);
\draw (2,0) -- (4,0);
\draw (0,-4) -- (0,-2);
\draw (0,2) -- (0,4);
\draw[fill=white] (-2,-2) rectangle (2,2);
\draw (-2,2) -- (2,-2);
\node at (-4,0) [left]{$\scriptstyle{n}$};
\node at (4,0) [right]{$\scriptstyle{n}$};
\node at (0,4) [above]{$\scriptstyle{m}$};
\node at (0,-4) [below]{$\scriptstyle{m}$};
}\ }
$
is defined by
\begin{align*}
&\mathord{\ \tikz[baseline=-.6ex, scale=.1]{
\draw (-4,0) -- (-2,0);
\draw (2,0) -- (4,0);
\draw (0,-4) -- (0,-2);
\draw (0,2) -- (0,4);
\draw[fill=white] (-2,-2) rectangle (2,2);
\draw (-2,2) -- (2,-2);
\node at (-4,0) [left]{$\scriptstyle{n}$};
\node at (4,0) [right]{$\scriptstyle{n}$};
\node at (0,4) [above]{$\scriptstyle{1}$};
\node at (0,-4) [below]{$\scriptstyle{1}$};
}\ }
=
\mathord{\ \tikz[baseline=-.6ex, scale=.1]{
\draw (-7,-5) -- (7,-5);
\draw (-7,-3) -- (7,-3);
\draw (-7,5) -- (7,5);
\draw (4,5) -- (4,7);
\draw (3,3) -- (3,5);
\draw (-4,-7) -- (-4,-5);
\draw (-3,-5) -- (-3,-3);
\draw (-2,-3) -- (-2,-1);
\node[yscale=2.5] at (-9,0) {${\{}$};
\node at (-9,0) [left]{$\scriptstyle{n}$};
\node[rotate=90] at (-6,1){$\scriptstyle{\cdots}$};
\node[rotate=90] at (6,1){$\scriptstyle{\cdots}$};
\node[rotate=45] at (0,1){$\scriptstyle{\cdots}$};
}\ }\quad\text{and}\\
&\mathord{\ \tikz[baseline=-.6ex, scale=.1]{
\draw (-4,0) -- (-2,0);
\draw (2,0) -- (4,0);
\draw (0,-4) -- (0,-2);
\draw (0,2) -- (0,4);
\draw[fill=white] (-2,-2) rectangle (2,2);
\draw (-2,2) -- (2,-2);
\node at (-4,0) [left]{$\scriptstyle{n}$};
\node at (4,0) [right]{$\scriptstyle{n}$};
\node at (0,4) [above]{$\scriptstyle{m}$};
\node at (0,-4) [below]{$\scriptstyle{m}$};
}\ }
=
\mathord{\ \tikz[baseline=-.6ex, scale=.1]{
\draw (-4,0) -- (-2,0);
\draw (2,0) -- (4,0);
\draw (0,-4) -- (0,-2);
\draw (0,2) -- (0,4);
\draw[fill=white] (-2,-2) rectangle (2,2);
\draw (-2,2) -- (2,-2);
\node at (-4,0) [left]{$\scriptstyle{n}$};
\node at (0,4) [above]{$\scriptstyle{m-1}$};
\node at (0,-4) [below]{$\scriptstyle{m-1}$};
\begin{scope}[xshift=8cm]
\draw (-4,0) -- (-2,0);
\draw (2,0) -- (4,0);
\draw (0,-4) -- (0,-2);
\draw (0,2) -- (0,4);
\draw[fill=white] (-2,-2) rectangle (2,2);
\draw (-2,2) -- (2,-2);
\node at (-4,0) [above]{$\scriptstyle{n}$};
\node at (4,0) [right]{$\scriptstyle{n}$};
\node at (0,4) [above]{$\scriptstyle{1}$};
\node at (0,-4) [below]{$\scriptstyle{1}$};
\end{scope}
}\ }
\quad\text{for}\quad m>1.
\end{align*}
Specifying a direction on an edge around the box determine the all directions of edges in the box.
\end{DEF}

\begin{DEF}
For positive integer $n$,
$
\mathord{\ \tikz[baseline=-.6ex, scale=.1]{
\draw (30:5) -- (0,0);
\draw (150:5) -- (0,0);
\draw (270:5) -- (0,0);
\draw[fill=white] (-30:4) -- (90:4) -- (210:4) -- cycle;
\node at (30:5) [below]{$\scriptstyle{n}$};
\node at (150:5) [below]{$\scriptstyle{n}$};
\node at (270:5) [left]{$\scriptstyle{n}$};
}\ }
$
is defined by
$\mathord{\ \tikz[baseline=-.6ex, scale=.1]{
\draw (30:5) -- (0,0);
\draw (150:5) -- (0,0);
\draw (270:5) -- (0,0);
\draw[fill=white] (-30:4) -- (90:4) -- (210:4) -- cycle;
\node at (30:5) [below]{$\scriptstyle{1}$};
\node at (150:5) [below]{$\scriptstyle{1}$};
\node at (270:5) [left]{$\scriptstyle{1}$};
}\ }
=\mathord{\ \tikz[baseline=-.6ex, scale=.1]{
\draw (30:5) -- (0,0);
\draw (150:5) -- (0,0);
\draw (270:5) -- (0,0);
}\ }$ and 
$\mathord{\ \tikz[baseline=-.6ex, scale=.1]{
\draw (30:5) -- (0,0);
\draw (150:5) -- (0,0);
\draw (270:5) -- (0,0);
\draw[fill=white] (-30:4) -- (90:4) -- (210:4) -- cycle;
\node at (30:5) [below]{$\scriptstyle{n}$};
\node at (150:5) [below]{$\scriptstyle{n}$};
\node at (270:5) [left]{$\scriptstyle{n}$};
}\ }
=\mathord{\ \tikz[baseline=-.6ex, scale=.1]{
\draw (-10,5) -- (5,5);
\draw (-5,-4) -- (-5,5);
\draw (-10,1) -- (5,1);
\draw (0,-4) -- (0,1);
\draw[fill=white] (-30:3) -- (90:3) -- (210:3) -- cycle;
\draw[fill=white] (-6,-2) rectangle (-4,3);
\draw (-6,3) -- (-4,-2);
\node at (-10,5) [left]{$\scriptstyle{1}$};
\node at (5,5) [right]{$\scriptstyle{1}$};
\node at (-5,-4) [below]{$\scriptstyle{1}$};
\node at (-10,0) [below]{$\scriptstyle{n-1}$};
\node at (4,0) [above]{$\scriptstyle{n-1}$};
\node at (0,-4) [below]{$\scriptstyle{n-1}$};
}\ }
$
for $n>1$.
We obtain a web by specifying a direction of an edge around the triangle.
\end{DEF}

We note a graphical description of properties of $A_{2}$ clasp and useful formulas.
\begin{LEM}\label{claspformula}
\begin{align*}
&\mathord{\ \tikz[baseline=-.6ex, scale=.1]{
\draw (0,-6) -- (0,6);
\draw[fill=white] (-4,-4) rectangle (4,-2);
\draw[fill=white] (-4,4) rectangle (4,2);
\node at (0,5) [left]{${\scriptstyle \gamma}$};
\node at (0,0) [left]{${\scriptstyle \beta}$};
\node at (0,-5) [left]{${\scriptstyle \alpha}$};
}\ }
=
\mathord{\ \tikz[baseline=-.6ex, scale=.1]{
\draw (0,-6) -- (0,6);
\draw[fill=white] (-4,-1) rectangle (4,1);
\node at (0,4) [left]{${\scriptstyle \gamma}$};
\node at (0,-4) [left]{${\scriptstyle \alpha}$};
}\ }
,
\quad\mathord{\ \tikz[baseline=-.6ex, scale=.1]{
\draw (-2,1) -- (0,3);
\draw (2,1) -- (0,3);
\draw (0,6) -- (0,3);
\draw (0,-6) -- (0,0);
\draw[fill=white] (-6,-1) rectangle (6,1);
\node at (4,3) {${\scriptstyle {\dots}}$};
\node at (-4,3) {${\scriptstyle {\dots}}$};
}\ }=0
,
\quad\mathord{\ \tikz[baseline=-.6ex, scale=.1]{
\draw (-2,1) to[out=north, in=west] (0,3) to[out=east, in=north] (2,1);
\draw (0,-6) -- (0,0);
\draw[fill=white] (-6,-1) rectangle (6,1);
\node at (4,3) {${\scriptstyle {\dots}}$};
\node at (-4,3) {${\scriptstyle {\dots}}$};
}\ }=0
,\\
&\mathord{\ \tikz[baseline=-.6ex, scale=.1]{
\draw[-<-=.5] (-3,1) to[out=north, in=south west] (0,4);
\draw[->-=.5] (3,1) to[out=north, in=south east] (0,4);
\draw[-<-=.8] (0,4) to[out=north east, in=south] (3,7);
\draw[->-=.8] (0,4) to[out=north west, in=south] (-3,7);
\draw (0,-5) -- (0,0);
\draw[fill=white] (0,2) -- (-2,4) -- (0,6) -- (2,4) -- cycle;
\draw (0,2) -- (0,6);
\draw[fill=white] (-6,-1) rectangle (6,1);
\node at (5,4) {${\scriptstyle {\dots}}$};
\node at (-5,4) {${\scriptstyle {\dots}}$};
\node at (-3,7) [above]{${\scriptstyle m}$};
\node at (3,7) [above]{${\scriptstyle n}$};
}\ }
=
\mathord{\ \tikz[baseline=-.6ex, scale=.1]{
\draw[->-=.5] (-2,1) -- (-2,7);
\draw[-<-=.5] (2,1) -- (2,7);
\draw (0,-5) -- (0,0);
\draw[fill=white] (-6,-1) rectangle (6,1);
\node at (5,4) {${\scriptstyle {\dots}}$};
\node at (-5,4) {${\scriptstyle {\dots}}$};
\node at (-2,7) [above]{${\scriptstyle m}$};
\node at (2,7) [above]{${\scriptstyle n}$};
}\ }
,
\quad\mathord{\ \tikz[baseline=-.6ex, scale=.1]{
\draw[->-=.5] (-3,1) to[out=north, in=south west] (0,4);
\draw[-<-=.5] (3,1) to[out=north, in=south east] (0,4);
\draw[->-=.8] (0,4) to[out=north east, in=south] (3,7);
\draw[-<-=.8] (0,4) to[out=north west, in=south] (-3,7);
\draw (0,-5) -- (0,0);
\draw[fill=white] (0,2) -- (-2,4) -- (0,6) -- (2,4) -- cycle;
\draw (0,2) -- (0,6);
\draw[fill=white] (-6,-1) rectangle (6,1);
\node at (5,4) {${\scriptstyle {\dots}}$};
\node at (-5,4) {${\scriptstyle {\dots}}$};
\node at (-3,7) [above]{${\scriptstyle m}$};
\node at (3,7) [above]{${\scriptstyle n}$};
}\ }
=
\mathord{\ \tikz[baseline=-.6ex, scale=.1]{
\draw[-<-=.5] (-2,1) -- (-2,7);
\draw[->-=.5] (2,1) -- (2,7);
\draw (0,-5) -- (0,0);
\draw[fill=white] (-6,-1) rectangle (6,1);
\node at (5,4) {${\scriptstyle {\dots}}$};
\node at (-5,4) {${\scriptstyle {\dots}}$};
\node at (-2,7) [above]{${\scriptstyle m}$};
\node at (2,7) [above]{${\scriptstyle n}$};
}\ }
,\\
&\mathord{\ \tikz[baseline=-.6ex, scale=.1]{
\draw[-<-=.8, white, double=black, double distance=0.4pt, ultra thick] (-3,1) to[out=north, in=south] (3,7);
\draw[->-=.8, white, double=black, double distance=0.4pt, ultra thick] (3,1) to[out=north, in=south] (-3,7);
\draw (0,-5) -- (0,0);
\draw[fill=white] (-6,-1) rectangle (6,1);
\node at (5,4) {${\scriptstyle {\dots}}$};
\node at (-5,4) {${\scriptstyle {\dots}}$};
\node at (-3,7) [above]{${\scriptstyle m}$};
\node at (3,7) [above]{${\scriptstyle n}$};
}\ }
=(-q^{-\frac{1}{6}})^{mn}
\mathord{\ \tikz[baseline=-.6ex, scale=.1]{
\draw[->-=.5] (-2,1) -- (-2,7);
\draw[-<-=.5] (2,1) -- (2,7);
\draw (0,-5) -- (0,0);
\draw[fill=white] (-6,-1) rectangle (6,1);
\node at (5,4) {${\scriptstyle {\dots}}$};
\node at (-5,4) {${\scriptstyle {\dots}}$};
\node at (-2,7) [above]{${\scriptstyle m}$};
\node at (2,7) [above]{${\scriptstyle n}$};
}\ }
,
\quad\mathord{\ \tikz[baseline=-.6ex, scale=.1]{
\draw[->-=.8, white, double=black, double distance=0.4pt, ultra thick] (3,1) to[out=north, in=south] (-3,7);
\draw[-<-=.8, white, double=black, double distance=0.4pt, ultra thick] (-3,1) to[out=north, in=south] (3,7);
\draw (0,-5) -- (0,0);
\draw[fill=white] (-6,-1) rectangle (6,1);
\node at (5,4) {${\scriptstyle {\dots}}$};
\node at (-5,4) {${\scriptstyle {\dots}}$};
\node at (-3,7) [above]{${\scriptstyle m}$};
\node at (3,7) [above]{${\scriptstyle n}$};
}\ }
=(-q^{\frac{1}{6}})^{mn}
\mathord{\ \tikz[baseline=-.6ex, scale=.1]{
\draw[->-=.5] (-2,1) -- (-2,7);
\draw[-<-=.5] (2,1) -- (2,7);
\draw (0,-5) -- (0,0);
\draw[fill=white] (-6,-1) rectangle (6,1);
\node at (5,4) {${\scriptstyle {\dots}}$};
\node at (-5,4) {${\scriptstyle {\dots}}$};
\node at (-2,7) [above]{${\scriptstyle m}$};
\node at (2,7) [above]{${\scriptstyle n}$};
}\ }
,\\
&\mathord{\ \tikz[baseline=-.6ex, scale=.1]{
\draw[->-=.8, white, double=black, double distance=0.4pt, ultra thick] (-3,1) to[out=north, in=south] (3,7);
\draw[->-=.8, white, double=black, double distance=0.4pt, ultra thick] (3,1) to[out=north, in=south] (-3,7);
\draw (0,-5) -- (0,0);
\draw[fill=white] (-6,-1) rectangle (6,1);
\node at (5,4) {${\scriptstyle {\dots}}$};
\node at (-5,4) {${\scriptstyle {\dots}}$};
\node at (-3,7) [above]{${\scriptstyle m}$};
\node at (3,7) [above]{${\scriptstyle n}$};
}\ }
=q^{\frac{mn}{3}}
\mathord{\ \tikz[baseline=-.6ex, scale=.1]{
\draw[->-=.5] (-2,1) -- (-2,7);
\draw[->-=.5] (2,1) -- (2,7);
\draw (0,-5) -- (0,0);
\draw[fill=white] (-6,-1) rectangle (6,1);
\node at (5,4) {${\scriptstyle {\dots}}$};
\node at (-5,4) {${\scriptstyle {\dots}}$};
\node at (-2,7) [above]{${\scriptstyle m}$};
\node at (2,7) [above]{${\scriptstyle n}$};
}\ }
,
\quad\mathord{\ \tikz[baseline=-.6ex, scale=.1]{
\draw[->-=.8, white, double=black, double distance=0.4pt, ultra thick] (3,1) to[out=north, in=south] (-3,7);
\draw[->-=.8, white, double=black, double distance=0.4pt, ultra thick] (-3,1) to[out=north, in=south] (3,7);
\draw (0,-5) -- (0,0);
\draw[fill=white] (-6,-1) rectangle (6,1);
\node at (5,4) {${\scriptstyle {\dots}}$};
\node at (-5,4) {${\scriptstyle {\dots}}$};
\node at (-3,7) [above]{${\scriptstyle m}$};
\node at (3,7) [above]{${\scriptstyle n}$};
}\ }
=q^{-\frac{mn}{3}}
\mathord{\ \tikz[baseline=-.6ex, scale=.1]{
\draw[->-=.5] (-2,1) -- (-2,7);
\draw[->-=.5] (2,1) -- (2,7);
\draw (0,-5) -- (0,0);
\draw[fill=white] (-6,-1) rectangle (6,1);
\node at (5,4) {${\scriptstyle {\dots}}$};
\node at (-5,4) {${\scriptstyle {\dots}}$};
\node at (-2,7) [above]{${\scriptstyle m}$};
\node at (2,7) [above]{${\scriptstyle n}$};
}\ },\\
&\mathord{\ \tikz[baseline=-.6ex, scale=.1]{
\draw (-8,0) -- (8,0);
\draw (0,-7) -- (0,0);
\draw[fill=white] (-4,-3) -- (0,4) -- (4,-3) -- cycle;
\draw[fill=white] (-3,-5) rectangle (3,-4);
\draw[fill=white] (-6,-3) rectangle (-5,3);
\draw[fill=white] (6,-3) rectangle (5,3);
\node at (-8,0) [above]{${\scriptstyle n}$};
\node at (8,0) [above]{${\scriptstyle n}$};
\node at (0,-7) [right]{${\scriptstyle n}$};
}\ }
=\mathord{\ \tikz[baseline=-.6ex, scale=.1]{
\draw (-8,0) -- (6,0);
\draw (0,-7) -- (0,0);
\draw[fill=white] (-4,-3) -- (0,4) -- (4,-3) -- cycle;
\draw[fill=white] (-3,-5) rectangle (3,-4);
\draw[fill=white] (-6,-3) rectangle (-5,3);
\node at (-8,0) [above]{${\scriptstyle n}$};
\node at (6,0) [above]{${\scriptstyle n}$};
\node at (0,-7) [right]{${\scriptstyle n}$};
}\ }
\end{align*}
\end{LEM}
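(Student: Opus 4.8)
The plan is to reduce every identity in Lemma~\ref{claspformula} to three ingredients: the recursive definition of the $A_2$ clasp in Definition~\ref{sclasp}, the annihilation property of Lemma~\ref{vanish}, and the local $A_2$ skein relation of Definition~\ref{A2skein}. The first three identities are essentially formal. The stacking identity is the standard absorption (idempotence) of $A_2$ clasps coming from the defining recursion, while the two vanishing identities hold because a cup on two adjacent outputs, or an internal trivalent ``spike'', presents the clasp with one of the morphisms $t_{--}^{+}$, $t_{++}^{-}$, $d_{+-}$, $d_{-+}$, each of which annihilates the clasp by Lemma~\ref{vanish}.

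For the two bending identities I would resolve the trivalent vertices sitting on top of the clasp; every resulting term other than the two straight strands carries a turnback against the clasp and hence vanishes by Lemma~\ref{vanish}, leaving exactly the parallel-strand diagram. The four crossing identities I would prove by applying the skein relation of Definition~\ref{A2skein} one elementary crossing at a time. Resolving a single crossing yields an oriented-smoothing term and an ``H''-term, and composing with the clasp underneath annihilates whichever of the two carries a turnback against the clasp, so a fixed scalar survives per crossing. For same-oriented strands the smoothing survives and contributes $q^{\pm 1/3}$, giving $q^{\pm mn/3}$ over all $mn$ crossings; for oppositely oriented strands the smoothing degenerates to a turnback and dies while the ``H''-term survives with coefficient $-q^{\mp 1/6}$, so collecting $mn$ crossings produces $(-q^{\mp 1/6})^{mn}$.

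The genuinely substantial identity is the last one, absorption of a clasp by a single leg of the triangle; write $T_n$ for the triangle web with legs colored $n$. I expect the crux to be establishing that $T_n$ is itself \emph{clasped} along each leg, meaning that a cup on any two adjacent strands of a leg annihilates $T_n$. I would prove this by induction on $n$. The base case $n=1$ is immediate, since then the single-strand clasp is $\mathrm{id}_{+}$ and $T_1$ is a lone trivalent vertex. For the inductive step I would unfold the recursion defining $T_n$ in terms of $T_{n-1}$, a stair-step, and an internal clasp of color $n-1$, and slide a turnback placed on the right leg through the stair-step: it resolves into a turnback on the corresponding leg of $T_{n-1}$, annihilated by the induction hypothesis, together with a turnback meeting the internal clasp, annihilated by the second identity of this lemma. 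Once the clasped-leg property is in hand, the final identity follows by expanding the external clasp on the right leg through Definition~\ref{sclasp} as $\mathrm{id}$ plus terms each carrying a turnback on that leg; composing with $T_n$ leaves only the identity term, which is precisely $T_n$ with the clasp deleted. The delicate bookkeeping, and the step I expect to cost the most effort, is controlling the orientations and the exact way a turnback passes through the stair-step at each inductive stage.
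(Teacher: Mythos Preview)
The paper's own proof is a one-line reference (``graphical interpretation\dots see \cite{Yuasa17}''), so your proposal is far more detailed than what appears here. For the first three identities, the stair-step (bending) identities, and the four crossing identities, your outline is correct and matches the standard arguments: idempotence plus Lemma~\ref{vanish} handles the first group, and resolving crossings one at a time against the clasp gives the stated scalars.

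The genuine problem is your argument for the last identity. Your plan rests on the claim that the \emph{bare} triangle $T_n$ already annihilates a cap on any leg, and your inductive step invokes ``an internal clasp of color $n-1$'' in the recursion for $T_n$. Neither holds here. In this paper's Definition of the triangle there is no internal clasp: $T_n$ is built from $T_{n-1}$, a single width-$1$ stair-step, and one extra trivalent vertex, nothing more. And the bare triangle does \emph{not} kill caps: already for $n=2$, capping the two strands of one leg with $t_{++}^{-}$ produces a web containing a single internal square, and the square relation of Definition~\ref{A2skein} resolves it into a sum of two nonzero basis webs. So the stronger statement you are aiming for is false, and the ``internal clasp'' you want to hit in the inductive step is not there.

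What actually makes the third clasp redundant is the presence of the \emph{other two} external clasps. The identity as stated is that $T_n$ with clasps on the bottom and left legs already kills caps on the right leg; the two external clasps are what absorb the terms your argument tries to feed into a nonexistent internal one. A workable route is either to run the induction while carrying those two outer clasps (so that when you unfold $T_n$ the pieces hitting $T_{n-1}$ inherit clasps on two of its legs), or to argue via $\dim\operatorname{Hom}$: with two legs clasped the relevant morphism space is one-dimensional, so the right-leg clasp acts by a scalar, and comparing the leading term shows that scalar is $1$. Either way, the two external clasps are essential input, not optional.
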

\begin{proof}
Graphical interpretation of the previous definitions and lemmas. 
One can prove formulas related to the braiding and the triangle by easy calculation, see \cite{Yuasa17}.
\end{proof}

\section{Twist formula}
In this section, we show new twist formulas derived from a combinatorial method related to the generating function of Young diagrams. 
We firstly explain this method which is used in the proof of full twist formulas for the Kauffman bracket and Theorem~{\ref{antifull}} in \cite{Yuasa17}.

Let $\{\sigma_{n}(k,l)\mid 0\leq k,l\leq n\}$ be a set of webs satisfying the following conditions:
\begin{enumerate}
\item $\sigma_{n}(k,l)$ is basis web if $k+l=n$,
\item $\sigma_{n}(k,l)=X(k,l)\sigma_{n}(k+1,l)+Y(k,l)\sigma_{n}(k,l+1)$,
\item $X(k,l)Y(k+1,l)=qY(k,l)X(k,l+1)$,
\end{enumerate}
where $X(k,l), Y(k,l)\in\mathbb{C}(q^{\frac{1}{6}})$.

\begin{PROP}\label{partition}
\[
 \sigma_{n}(0,0)=\sum_{k+l=n}\prod_{j=0}^{l-1}Y(0,j)\prod_{i=0}^{k-1}X(i,l)\binom{n}{k}_q\sigma_{n}(k,l).
\]\
\end{PROP}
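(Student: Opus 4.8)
The plan is to expand $\sigma_n(0,0)$ by iterating the recurrence in condition~(2) until every term reaches the boundary $k+l=n$, where condition~(1) guarantees that $\sigma_n(k,l)$ is a basis web. Each application of~(2) replaces an index $(k,l)$ either by $(k+1,l)$, picking up the factor $X(k,l)$, or by $(k,l+1)$, picking up $Y(k,l)$. Organizing these choices as a monotone lattice path from $(0,0)$ to a boundary point $(k,l)$ with $k+l=n$, I would write the resulting coefficient of $\sigma_n(k,l)$ as $c(k,l)=\sum_{P}W(P)$, the sum over all such paths $P$ of the product $W(P)$ of the edge weights $X,Y$ encountered along $P$. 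The proposition then amounts to the identity $c(k,l)=\prod_{j=0}^{l-1}Y(0,j)\prod_{i=0}^{k-1}X(i,l)\binom{n}{k}_q$.

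First I would identify the right-hand prefactor as the weight of a single distinguished path: the reference path $P_0$ that first takes all $l$ up-steps and then all $k$ right-steps has weight exactly $C_0=\prod_{j=0}^{l-1}Y(0,j)\prod_{i=0}^{k-1}X(i,l)$. The heart of the argument is to compare an arbitrary path $P$ to $P_0$ using condition~(3). Reading $X(a,b)Y(a+1,b)=qY(a,b)X(a,b+1)$ as a local move on a unit square, it says that converting an ``up-then-right'' corner into a ``right-then-up'' corner multiplies the path weight by $q$. Since any path $P$ is reached from $P_0$ by a sequence of such local moves, and each move creates exactly one new inversion (a right-step preceding an up-step), I would prove by induction on the number $\operatorname{inv}(P)$ of inversions that $W(P)=q^{\operatorname{inv}(P)}C_0$; the base case $\operatorname{inv}(P)=0$ forces $P=P_0$, and each move simultaneously raises $\operatorname{inv}$ by one and multiplies the weight by $q$, so the two sides stay in step.

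Summing over all paths then gives $c(k,l)=C_0\sum_{P}q^{\operatorname{inv}(P)}$, and I would finish by invoking the standard identity $\sum_{P}q^{\operatorname{inv}(P)}=\binom{n}{k}_q$, that is, the Gaussian binomial coefficient is the generating function for inversions of words in $k$ right-steps and $l$ up-steps (equivalently, for Young diagrams fitting in a $k\times l$ box). Alternatively, the whole computation can be packaged as an induction on $n$: the path recurrence for $c$ coincides with the $q$-Pascal rule $\binom{n}{k}_q=\binom{n-1}{k-1}_q+q^k\binom{n-1}{k}_q$, where the factor $q^k$ emerges from applying condition~(3) exactly $k$ times to telescope $Y(k,l-1)\prod_{i=0}^{k-1}X(i,l-1)=q^{k}Y(0,l-1)\prod_{i=0}^{k-1}X(i,l)$.

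The main obstacle I anticipate is purely bookkeeping: matching the power of $q$ produced by repeated use of the commutation relation~(3) to the inversion statistic (equivalently, to the $q^k$ in $q$-Pascal), and checking the boundary conventions in the path recurrence so that the empty products and the extreme cases $k=0$ or $l=0$ are handled correctly. Once the local move is correctly read off from~(3), the rest is the classical lattice-path proof that the Gaussian binomial counts inversions.
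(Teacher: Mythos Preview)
Your proposal is correct and follows essentially the same route as the paper: expand $\sigma_n(0,0)$ along monotone lattice paths, use condition~(3) as the local square relation to reduce every path weight to $q^{\#\{\text{inversions}\}}$ times the weight $\prod_{j=0}^{l-1}Y(0,j)\prod_{i=0}^{k-1}X(i,l)$ of the ``all up, then all right'' reference path, and identify the resulting inversion generating function with $\binom{n}{k}_q$. The paper's proof is just a terser version of the same argument (it phrases the inversion count as $\#\{i<j\mid a_i=X,\ a_j=Y\}$ for a word in $X$'s and $Y$'s), so your write-up, including the optional $q$-Pascal induction, is on target.
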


\begin{proof}
Let us label a lattice point $(k,l)$ in $\mathbb{Z}\times\mathbb{Z}$ with $\sigma(k,l)$, and a directed edge from $(k,l)$ to $(k+1,l)$ with $X(k,l)$, and a directed edge from $(k,l)$ to $(k,l+1)$ with $Y(k,l)$.
We denote the label of an edge $e$ by $w(e)$.
In the expansion of $\sigma_{n}(0,0)$, it is seen that the coefficient of $\sigma_{n}(k,l)$, for $k+l=n$, is 
\begin{align}\label{qcalc}
 \sum_{\gamma}\left(\prod_{e\in\gamma}w(e)\right)=\sum_{a_i\in\{X,Y\}} a_{1}a_{2}\cdots a_{n},
\end{align}
where $X$ (resp. $Y$) appears $k$ (resp. $l$) times in $a_1a_2\cdots a_n$ and $\gamma$ is a path from $(0,0)$ to $(k,l)$ which consists of the above directed edges.
Although we describe $X$ and $Y$ without its coordinate in the above, it is uniquely determined. 
For example, $a_{1}a_{2}a_{3}a_{4}\cdots=XYYX\cdots=X(0,0)Y(1,0)Y(1,1)X(1,2)\cdots$. 
By the condition~(3), the RHS of (\ref{qcalc}) is 
\[
 Y^{l}X^{k}\sum q^{\#\{i<j\mid a_{i}=X, a_{j}=Y\}}=Y^{l}X^{k}\binom{n}{k}_q.
\]
\end{proof}
In \cite{Yuasa17}, we used Young diagrams instead of sequences of $X$ and $Y$. 
However, it is intrinsically the same because a lattice path from $(0,0)$ to $(k,l)$ is interpreted as a Young diagram.
The formula of Theorem~\ref{antifull} is obtained from Proposition~\ref{partition} by setting $n=d=\min\{s,t\}$ and 
\[
\sigma_{n}(k,l)=
\mathord{ \tikz[baseline=-.6ex, scale=.1]{
\draw (-6,5) -- (-9,5);
\draw (-6,-5) -- (-9,-5);
\draw (6,5) -- (9,5);
\draw (6,-5) -- (9,-5);
\draw[->-=.9, white, double=black, double distance=0.4pt, ultra thick] (-6,-5) to[out=east, in=west] (0,4);
\draw[-<-=.9, white, double=black, double distance=0.4pt, ultra thick] (-6,6) to[out=east, in=west] (0,-4);
\draw[white, double=black, double distance=0.4pt, ultra thick] (0,-4) to[out=east, in=west] (6,6);
\draw[white, double=black, double distance=0.4pt, ultra thick] (0,4) to[out=east, in=west] (6,-5);
\draw[->-=.5] (-6,-4) to[out=east, in=south] (-5,0) to[out=north, in=east] (-6,4);
\draw[-<-=.5] (6,-4) to[out=west, in=south] (5,0) to[out=north, in=west] (6,4);
\draw[->-=.5] (-6,-6) -- (6,-6);
\draw[fill=white] (-7,-7) rectangle (-6,-3);
\draw[fill=white] (-7,7) rectangle (-6,3);
\draw[fill=white] (7,-7) rectangle (6,-3);
\draw[fill=white] (7,7) rectangle (6,3);
\node at (-7,-7)[left]{$\scriptstyle{s}$};
\node at (-7,7)[left]{$\scriptstyle{t}$};
\node at (7,-7)[right]{$\scriptstyle{s}$};
\node at (7,7)[right]{$\scriptstyle{t}$};
\node at (0,-6)[below]{$\scriptstyle{k}$};
\node at (5,0)[right]{$\scriptstyle{l}$};
\node at (-5,0)[left]{$\scriptstyle{l}$};
}\ }.
\]

In a similar way, We will prove the following formula.
\begin{THM}[parallel full twist formula]\label{parallelfull}
Let $d=\min\{s,t\}$ and $\delta=\left| s-t \right|$.
\begin{align*}
\mathord{ \tikz[baseline=-.6ex, scale=.1]{
\draw (-6,5) -- (-9,5);
\draw (-6,-5) -- (-9,-5);
\draw (6,5) -- (9,5);
\draw (6,-5) -- (9,-5);
\draw[->-=.9, white, double=black, double distance=0.4pt, ultra thick] (-6,-5) to[out=east, in=west] (0,5);
\draw[->-=.9, white, double=black, double distance=0.4pt, ultra thick] (-6,5) to[out=east, in=west] (0,-5);
\draw[white, double=black, double distance=0.4pt, ultra thick] (0,-5) to[out=east, in=west] (6,5);
\draw[white, double=black, double distance=0.4pt, ultra thick] (0,5) to[out=east, in=west] (6,-5);
\draw[fill=white] (-7,-7) rectangle (-6,-3);
\draw[fill=white] (-7,7) rectangle (-6,3);
\draw[fill=white] (7,-7) rectangle (6,-3);
\draw[fill=white] (7,7) rectangle (6,3);
\node at (-7,-7)[left]{$\scriptstyle{t}$};
\node at (-7,7)[left]{$\scriptstyle{s}$};
\node at (7,-7)[right]{$\scriptstyle{t}$};
\node at (7,7)[right]{$\scriptstyle{s}$};
}\ }
&=q^{\frac{2}{3}st}
\sum_{l=0}^{\infty}
q^{l^2-\frac{l}{2}}q^{-(s+t)l}
(q)_{l}{s \choose l}_{q}{t \choose l}_{q}
\mathord{\ \tikz[baseline=-.6ex, scale=.1]{
\draw (-6,5) -- (-9,5);
\draw (-6,-5) -- (-9,-5);
\draw (6,5) -- (9,5);
\draw (6,-5) -- (9,-5);
\draw[->-=.5] (-6,6) -- (6,6);
\draw[->-=.5] (-6,-6) -- (6,-6);
\draw[->-=.5] (-6,4) to[out=east, in=north west] (0,0);
\draw[->-=.5] (-6,-4) to[out=east, in=south west] (0,0);
\draw[->-=.7] (0,0) to[out=north east, in=west] (6,4);
\draw[->-=.7] (0,0) to[out=south east, in=west] (6,-4);
\draw[fill=white] (-3,0) -- (0,3) -- (3,0) -- (0,-3) -- cycle;
\draw (0,3) -- (0,-3);
\draw[fill=white] (-7,-7) rectangle (-6,-3);
\draw[fill=white] (-7,7) rectangle (-6,3);
\draw[fill=white] (7,-7) rectangle (6,-3);
\draw[fill=white] (7,7) rectangle (6,3);
\node at (-7,-7)[left]{$\scriptstyle{s}$};
\node at (-7,7)[left]{$\scriptstyle{t}$};
\node at (7,-7)[right]{$\scriptstyle{s}$};
\node at (7,7)[right]{$\scriptstyle{t}$};
\node at (0,6)[above]{$\scriptstyle{t-l}$};
\node at (0,-6)[below]{$\scriptstyle{s-l}$};
\node at (-5,4)[below]{$\scriptstyle{l}$};
\node at (-5,-4)[above]{$\scriptstyle{l}$};
\node at (5,4)[below]{$\scriptstyle{l}$};
\node at (5,-4)[above]{$\scriptstyle{l}$};
}\ }\\
&=q^{-\frac{d(d+\delta)}{3}-\frac{d}{2}}
\sum_{k=0}^{d}
q^{k(k+\delta)+\frac{k}{2}}
\frac{(q)_{d+\delta}}{(q)_{k+\delta}}{d \choose k}_{q}
\mathord{\ \tikz[baseline=-.6ex, scale=.1]{
\draw (-6,5) -- (-9,5);
\draw (-6,-5) -- (-9,-5);
\draw (6,5) -- (9,5);
\draw (6,-5) -- (9,-5);
\draw[->-=.5] (-6,6) -- (6,6);
\draw[->-=.5] (-6,-6) -- (6,-6);
\draw[->-=.5] (-6,4) to[out=east, in=north west] (0,0);
\draw[->-=.5] (-6,-4) to[out=east, in=south west] (0,0);
\draw[->-=.7] (0,0) to[out=north east, in=west] (6,4);
\draw[->-=.7] (0,0) to[out=south east, in=west] (6,-4);
\draw[fill=white] (-3,0) -- (0,3) -- (3,0) -- (0,-3) -- cycle;
\draw (0,3) -- (0,-3);
\draw[fill=white] (-7,-7) rectangle (-6,-3);
\draw[fill=white] (-7,7) rectangle (-6,3);
\draw[fill=white] (7,-7) rectangle (6,-3);
\draw[fill=white] (7,7) rectangle (6,3);
\node at (-7,-7)[left]{$\scriptstyle{s}$};
\node at (-7,7)[left]{$\scriptstyle{t}$};
\node at (7,-7)[right]{$\scriptstyle{s}$};
\node at (7,7)[right]{$\scriptstyle{t}$};
\node at (0,6)[above]{$\scriptstyle{k+t-d}$};
\node at (0,-6)[below]{$\scriptstyle{k+s-d}$};
\node at (-3,2)[left]{$\scriptstyle{d-k}$};
\node at (-3,-2)[left]{$\scriptstyle{d-k}$};
\node at (3,2)[right]{$\scriptstyle{d-k}$};
\node at (3,-2)[right]{$\scriptstyle{d-k}$};
}\ }.
\end{align*}
\end{THM}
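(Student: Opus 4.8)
The plan is to apply Proposition~\ref{partition} to a family of webs interpolating between the parallel full twist and its resolutions, exactly as the excerpt obtains the anti-parallel formula of Theorem~\ref{antifull}. First I would set $n=d=\min\{s,t\}$ and introduce webs $\sigma_{n}(k,l)$ for $0\le k,l\le n$, modeled on the anti-parallel family displayed just before the theorem but with the two side caps replaced by trivalent turnbacks feeding into a central triangle (square) web, as forced by the parallel orientation of the strands. The indices $k$ and $l$ should record two competing ways of absorbing the twisted strands into this turnback structure, arranged so that when $k+l=n$ the diagram becomes the crossing-free basis web on the right-hand side of the theorem: the central diamond then carries $l=n-k$ legs and the horizontal edges are colored $t-l$ and $s-l$. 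This is condition~(1) of the setup for Proposition~\ref{partition}.

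For the recursion of condition~(2), I would move one further strand out of the twist, either by the first $A_{2}$ skein relation of Definition~\ref{A2skein}, which rewrites a same-oriented crossing as $q^{\frac{1}{3}}$ times the identity smoothing minus $q^{-\frac{1}{6}}$ times the $H$-web, or by the defining recursion of the one-row clasp in Definition~\ref{sclasp}. Reducing the two resulting diagrams with the clasp-absorption, braiding, and triangle identities collected in Lemma~\ref{claspformula} should produce exactly $\sigma_{n}(k+1,l)$ and $\sigma_{n}(k,l+1)$. The coefficients $X(k,l)$ and $Y(k,l)$ are then products of the crossing framing factors $q^{\pm\frac{1}{6}},q^{\frac{1}{3}}$ with the quantum-integer ratios $[\,\cdot\,]/[\,\cdot\,]$ coming out of the clasp expansion; computing them explicitly as functions of $(k,l)$ is the combinatorial core of the argument.

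The main obstacle is condition~(3), the $q$-commutation relation $X(k,l)Y(k+1,l)=qY(k,l)X(k,l+1)$. This asserts that the two ways of passing from $\sigma_{n}(k,l)$ to $\sigma_{n}(k+1,l+1)$ agree up to the single factor $q$, and it is where the correct power of the quantum parameter must emerge from the local skein calculus; in particular the half-integer powers of $q$ appearing in the theorem, namely the $\frac{l}{2}$ and $\frac{k}{2}$ terms that distinguish the parallel case from the anti-parallel one, are produced here by the framing contributions $q^{\pm\frac{1}{6}}$ of the same-oriented crossings, so this verification must be done with care. Once conditions (1)--(3) hold, Proposition~\ref{partition} gives
\[
\sigma_{n}(0,0)=\sum_{k+l=n}\prod_{j=0}^{l-1}Y(0,j)\prod_{i=0}^{k-1}X(i,l)\binom{n}{k}_{q}\sigma_{n}(k,l),
\]
and after identifying $\sigma_{n}(0,0)$ with the parallel full twist, up to the overall framing factor $q^{\frac{2}{3}st}$ extracted from the braiding identities of Lemma~\ref{claspformula}, this is the first displayed equality.

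Finally I would derive the second equality from the first by a purely algebraic manipulation, reindexing $l\mapsto d-k$ so that the leg color $l$ of the first form becomes $d-k$ and the horizontal colors $t-l,s-l$ become $k+t-d,k+s-d$. Using $\{s,t\}=\{d,d+\delta\}$ one rewrites $(q)_{l}\binom{s}{l}_{q}\binom{t}{l}_{q}$ in terms of $\frac{(q)_{d+\delta}}{(q)_{k+\delta}}\binom{d}{k}_{q}$ and collects the remaining powers of $q$ into $q^{-\frac{d(d+\delta)}{3}-\frac{d}{2}}q^{k(k+\delta)+\frac{k}{2}}$; this is a routine but careful $q$-Pochhammer computation that also checks the consistency of the two framing prefactors, since $st=d(d+\delta)$.
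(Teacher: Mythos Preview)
Your proposal is correct and follows essentially the same route as the paper: define the interpolating family $\sigma_{d}(k,l)$ with the diamond/triangle turnbacks, establish the two-term recursion via skein and clasp calculus (the paper does this through a chain of three lemmas culminating in Proposition~\ref{paralleleq}, which gives the explicit weights $X(k,l)=q^{\frac{2}{3}(d+\delta-l)}$ and $Y(k,l)=q^{-\frac{1}{6}}q^{\frac{k+\delta}{3}}q^{-\frac{2}{3}(d+\delta-l)}(1-q^{d+\delta-l})$), verify the $q$-commutation, and invoke Proposition~\ref{partition}. One minor recalibration: the hard work in the paper is deriving the explicit recursion (condition~(2)), not checking the $q$-commutation (condition~(3)), which is immediate once $X$ and $Y$ are known; your emphasis is slightly inverted but the logic is the same.
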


To prove this formula by Proposition~\ref{partition},
we set 
\begin{align*}
&\sigma_{d}(k,l)=
\mathord{ \tikz[baseline=-.6ex, scale=.1]{
\draw (-17,5) -- (-14,5);
\draw (-17,-5) -- (-14,-5);
\draw (17,5) -- (14,5);
\draw (17,-5) -- (14,-5);
\draw[->-=.5] (-14,6) -- (-6,6);
\draw[->-=.5] (-14,-6) -- (-6,-6);
\draw[->-=.5] (-6,0) -- (-11,0);
\draw[->-=.5] (-14,4) to[out=east, in=north east] (-11,0);
\draw[->-=.5] (-14,-4) to[out=east, in=south east] (-11,0);
\draw[fill=white] (-12,0) -- (-9,2) -- (-9,-2) -- cycle;
\draw[-<-=.5] (14,6) -- (6,6);
\draw[-<-=.5] (14,-6) -- (6,-6);
\draw[-<-=.5] (6,0) -- (11,0);
\draw[-<-=.5] (14,4) to[out=west, in=north west] (11,0);
\draw[-<-=.5] (14,-4) to[out=west, in=south west] (11,0);
\draw[fill=white] (12,0) -- (9,2) -- (9,-2) -- cycle;
\draw[-<-=.5] (-6,-6) -- (6,-6);
\draw[->-=.5] (-6,-4) -- (6,-4);
\draw[->-=.9, white, double=black, double distance=0.4pt, ultra thick] (-6,-1) to[out=east, in=west] (0,6);
\draw[->-=.9, white, double=black, double distance=0.4pt, ultra thick] (-6,6) to[out=east, in=west] (0,-1);
\draw[white, double=black, double distance=0.4pt, ultra thick] (0,-1) to[out=east, in=west] (6,6);
\draw[white, double=black, double distance=0.4pt, ultra thick] (0,6) to[out=east, in=west] (6,-1);
\draw[fill=white] (-7,8) rectangle (-6,4);
\draw[fill=white] (-7,-8) rectangle (-6,2);
\draw[fill=white] (7,8) rectangle (6,4);
\draw[fill=white] (7,-8) rectangle (6,2);
\node at (3,-5)[above]{$\scriptstyle{k}$};
\node at (3,-5)[below]{$\scriptstyle{l}$};
\node at (0,6)[above]{$\scriptstyle{s-k-l}$};
\draw[fill=white] (-14,-8) rectangle (-15,-2);
\draw[fill=white] (-14,8) rectangle (-15,2);
\draw[fill=white] (14,-8) rectangle (15,-2);
\draw[fill=white] (14,8) rectangle (15,2);
\node at (-10,6)[above]{$\scriptstyle{t-l}$};
\node at (-10,-6)[below]{$\scriptstyle{s-l}$};
\node at (10,6)[above]{$\scriptstyle{t-l}$};
\node at (10,-6)[below]{$\scriptstyle{s-l}$};
\node at (-10,1)[above]{$\scriptstyle{l}$};
\node at (-10,-1)[below]{$\scriptstyle{l}$};
\node at (10,1)[above]{$\scriptstyle{l}$};
\node at (10,-1)[below]{$\scriptstyle{l}$};
\node at (-17,5)[above]{$\scriptstyle{t}$};
\node at (-17,-5)[below]{$\scriptstyle{s}$};
\node at (17,5)[above]{$\scriptstyle{t}$};
\node at (17,-5)[below]{$\scriptstyle{s}$};
}\ }&&\text{if $d=s$},\\
&\sigma_{d}(k,l)=
\mathord{ \tikz[baseline=-.6ex, scale=.1]{
\draw (-17,5) -- (-14,5);
\draw (-17,-5) -- (-14,-5);
\draw (17,5) -- (14,5);
\draw (17,-5) -- (14,-5);
\draw[->-=.5] (-14,6) -- (-6,6);
\draw[->-=.5] (-14,-6) -- (-6,-6);
\draw[->-=.5] (-6,0) -- (-11,0);
\draw[->-=.5] (-14,4) to[out=east, in=north east] (-11,0);
\draw[->-=.5] (-14,-4) to[out=east, in=south east] (-11,0);
\draw[fill=white] (-12,0) -- (-9,2) -- (-9,-2) -- cycle;
\draw[-<-=.5] (14,6) -- (6,6);
\draw[-<-=.5] (14,-6) -- (6,-6);
\draw[-<-=.5] (6,0) -- (11,0);
\draw[-<-=.5] (14,4) to[out=west, in=north west] (11,0);
\draw[-<-=.5] (14,-4) to[out=west, in=south west] (11,0);
\draw[fill=white] (12,0) -- (9,2) -- (9,-2) -- cycle;
\draw[-<-=.5] (-6,6) -- (6,6);
\draw[->-=.5] (-6,4) -- (6,4);
\draw[->-=.9, white, double=black, double distance=0.4pt, ultra thick] (-6,-6) to[out=east, in=west] (0,1);
\draw[->-=.9, white, double=black, double distance=0.4pt, ultra thick] (-6,1) to[out=east, in=west] (0,-6);
\draw[white, double=black, double distance=0.4pt, ultra thick] (0,-6) to[out=east, in=west] (6,1);
\draw[white, double=black, double distance=0.4pt, ultra thick] (0,1) to[out=east, in=west] (6,-6);
\draw[fill=white] (-7,-8) rectangle (-6,-4);
\draw[fill=white] (-7,8) rectangle (-6,-2);
\draw[fill=white] (7,-8) rectangle (6,-4);
\draw[fill=white] (7,8) rectangle (6,-2);
\node at (3,5)[above]{$\scriptstyle{l}$};
\node at (3,5)[below]{$\scriptstyle{k}$};
\node at (0,-6)[below]{$\scriptstyle{t-k-l}$};
\draw[fill=white] (-14,-8) rectangle (-15,-2);
\draw[fill=white] (-14,8) rectangle (-15,2);
\draw[fill=white] (14,-8) rectangle (15,-2);
\draw[fill=white] (14,8) rectangle (15,2);
\node at (-10,6)[above]{$\scriptstyle{t-l}$};
\node at (-10,-6)[below]{$\scriptstyle{s-l}$};
\node at (10,6)[above]{$\scriptstyle{t-l}$};
\node at (10,-6)[below]{$\scriptstyle{s-l}$};
\node at (-10,1)[above]{$\scriptstyle{l}$};
\node at (-10,-1)[below]{$\scriptstyle{l}$};
\node at (10,1)[above]{$\scriptstyle{l}$};
\node at (10,-1)[below]{$\scriptstyle{l}$};
\node at (-17,5)[above]{$\scriptstyle{t}$};
\node at (-17,-5)[below]{$\scriptstyle{s}$};
\node at (17,5)[above]{$\scriptstyle{t}$};
\node at (17,-5)[below]{$\scriptstyle{s}$};
}\ }&& \text{if $d=t$.}
\end{align*}
We remark that it is easy to see that $A_2$ clasps in the middle of $\sigma_{d}(k,l)$ can be removable from the definition of $A_{2}$ clasp and Lemma~\ref{vanish}.

\begin{PROP}\label{paralleleq}
\[
 \sigma_{d}(k,l)=q^{\frac{2}{3}(d+\delta-l)}\sigma_{d}(k+1,l)+q^{-\frac{1}{6}}q^{\frac{k+\delta}{3}}q^{-\frac{2}{3}(d+\delta-l)}(1-q^{d+\delta-l})\sigma_{d}(k,l+1)
\]
\end{PROP}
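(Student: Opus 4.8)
The plan is to verify the local recursion directly, recognizing that Proposition~\ref{paralleleq} is exactly condition~(2) of the framework of Proposition~\ref{partition} for the webs $\sigma_{d}(k,l)$, with $X(k,l)=q^{\frac{2}{3}(d+\delta-l)}$ and $Y(k,l)=q^{-\frac{1}{6}}q^{\frac{k+\delta}{3}}q^{-\frac{2}{3}(d+\delta-l)}(1-q^{d+\delta-l})$. I would treat the case $d=s$ in full and deduce the case $d=t$ from it by the top--bottom reflection that exchanges $s\leftrightarrow t$ and reverses orientations. It is worth recording at the outset that $d+\delta=\max\{s,t\}$, so in the $d=s$ picture the quantity $d+\delta-l$ equals the label $t-l$ of the upper cable, which is precisely the bundle that the strand being peeled off must cross.

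First I would isolate, inside the central full-twist region of $\sigma_{d}(k,l)$, the outermost strand of the still-twisting $(s-k-l)$-bundle, and apply the first relation of Definition~\ref{A2skein} to its crossings with the opposing $(t-l)$-cable. This splits $\sigma_{d}(k,l)$ into a $q^{\frac{1}{3}}$-weighted ``parallel'' smoothing and a $-q^{-\frac{1}{6}}$-weighted trivalent ($H$-web) resolution. In the smoothing term the peeled strand runs straight through and is slid past the remaining $t-l=d+\delta-l$ same-orientation strands using the braiding formula $q^{\frac{mn}{3}}$ of Lemma~\ref{claspformula}; since passing through a full twist amounts to two such crossings, the accumulated power is $q^{\frac{2}{3}(d+\delta-l)}$. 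After the strand is absorbed into the side triangles by the triangle formula of Lemma~\ref{claspformula}, the remaining web reassembles precisely into $\sigma_{d}(k+1,l)$, giving $X(k,l)=q^{\frac{2}{3}(d+\delta-l)}$.

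In the trivalent resolution the new vertex produces a turnback that meets the $A_2$ clasp. Here I would use the recursive definition of the clasp (Definition~\ref{sclasp}) together with the absorption and vanishing properties (Lemma~\ref{vanish} and the clasp proposition) to push the turnback through the clasp, collecting the skein factor $q^{-\frac{1}{6}}$, the braiding factors $q^{\frac{k+\delta}{3}}$ and $q^{-\frac{2}{3}(d+\delta-l)}$ from the remaining slides, and a quantum-integer coefficient of the form $\frac{[a]}{[a+1]}$ arising from the clasp recursion; the latter normalizes to the factor $(1-q^{d+\delta-l})$. Matching the simplified web with the $(k,l+1)$ diagram then yields $Y(k,l)=q^{-\frac{1}{6}}q^{\frac{k+\delta}{3}}q^{-\frac{2}{3}(d+\delta-l)}(1-q^{d+\delta-l})$, as claimed.

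I expect the main obstacle to be the bookkeeping in the trivalent term: correctly pushing the turnback across the clasp and through the remaining bundle while extracting the factor $(1-q^{d+\delta-l})$ from the expansion of Definition~\ref{sclasp}, and in particular keeping all the half-integer powers of $q$ (the $q^{-\frac{1}{6}}$ of the skein relation and the various $q^{\frac{1}{3}}$-type braiding contributions) consistent. The parallel smoothing term is comparatively routine once the braiding and triangle formulas of Lemma~\ref{claspformula} are in hand. With this recursion established, Theorem~\ref{parallelfull} follows by feeding the coefficients $X,Y$ (whose compatibility condition~(3) is a short direct check) into Proposition~\ref{partition}.
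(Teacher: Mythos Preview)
Your overall strategy---peel one strand off the still-twisting bundle, resolve its interaction with the opposing $(t-l)$-cable, and identify the two outcomes with $\sigma_d(k+1,l)$ and $\sigma_d(k,l+1)$---is exactly the paper's strategy. The parallel-smoothing term is handled correctly: sliding the peeled strand through the full twist with $t-l=d+\delta-l$ like-oriented strands contributes $q^{\frac{2}{3}(d+\delta-l)}$, as you say.

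The trivalent term, however, has two concrete problems. First, for parallel strands the $A_2$ skein relation produces a pair of trivalent vertices joined by an edge (an $H$-web), not a cap or cup; there is no ``turnback that meets the clasp'' in the sense you describe, so the vanishing/absorption properties you invoke for it are the wrong tools. Second, and more seriously, the factor $(1-q^{d+\delta-l})$ does \emph{not} arise from a single clasp-recursion coefficient $\tfrac{[a]}{[a+1]}$. A single application of the skein relation at one crossing gives a factor $(1-q)$ (times powers of $q$); to obtain $(1-q^{d+\delta-l})$ you must resolve the peeled strand's crossings one at a time and sum the resulting trivalent contributions, which telescope as $\sum_{i}q^{i}(1-q)=1-q^{d+\delta-l}$. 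The paper organizes this as a short chain of lemmas: a one-crossing step (your $q^{\frac{2}{3}}$ vs.\ $q^{-\frac{1}{2}}q^{-\frac{n}{3}}q^{\frac{i}{3}}(1-q)$ dichotomy), then the summed $1$-vs-$n$ full-twist identity producing $(1-q^{n})$, and finally the $i$-vs-$j$ version whose second term is massaged, via the triangle and clasp formulas of Lemma~\ref{claspformula}, into $\sigma_d(k,l+1)$. Your sketch collapses these steps and, in doing so, misattributes the origin of the key factor; filling that in correctly is precisely the content of the paper's Lemmas~\ref{lem1}--\ref{lem3}.
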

We can confirm that $X(k,l)=q^{\frac{2}{3}(d+\delta-l)}$ and $Y(k,l)=q^{-\frac{1}{6}}q^{\frac{k}{3}}q^{-\frac{2}{3}(n-l)}(1-q^{n-l})$ satisfy (2) of Proposition~\ref{partition}.
Thus,
we can prove Theorem~\ref{parallelfull} by applying Proposition~\ref{partition}.
This equation is obtained as a sequel to a calculation of $A_{2}$ webs.
We will prove it in the case of $d=s$, but the otherwise is proven in the same way.

\begin{LEM}\label{lem1}
Let $n$ and $i$ are positive integers satisfying $i<n$.
\[
\mathord{ \tikz[baseline=-.6ex, scale=.1]{
\draw (-6,5) -- (-9,5);
\draw (-6,-5) -- (-9,-5);
\draw (6,5) -- (9,5);
\draw (6,-5) -- (9,-5);
\draw[->-=.5] (-6,6) -- (6,6);
\draw[->-=.9, white, double=black, double distance=0.4pt, ultra thick] (-6,-5) to[out=east, in=west] (0,4);
\draw[->-=.9, white, double=black, double distance=0.4pt, ultra thick] (-6,4) to[out=east, in=west] (0,-5);
\draw[white, double=black, double distance=0.4pt, ultra thick] (0,-5) to[out=east, in=west] (6,4);
\draw[white, double=black, double distance=0.4pt, ultra thick] (0,4) to[out=east, in=west] (6,-5);
\draw[fill=white] (-7,-7) rectangle (-6,-3);
\draw[fill=white] (-7,7) rectangle (-6,3);
\draw[fill=white] (7,-7) rectangle (6,-3);
\draw[fill=white] (7,7) rectangle (6,3);
\node at (0,6) [above]{$\scriptstyle{i}$};
\node at (0,-5) [below]{$\scriptstyle{n-i}$};
\node at (-7,-7) [left]{$\scriptstyle{1}$};
\node at (-7,7) [left]{$\scriptstyle{n}$};
\node at (7,-7) [right]{$\scriptstyle{1}$};
\node at (7,7) [right]{$\scriptstyle{n}$};
}\ }
=q^{\frac{2}{3}}
\mathord{ \tikz[baseline=-.6ex, scale=.1]{
\draw (-6,5) -- (-9,5);
\draw (-6,-5) -- (-9,-5);
\draw (6,5) -- (9,5);
\draw (6,-5) -- (9,-5);
\draw[->-=.5] (-6,6) -- (6,6);
\draw[->-=.9, white, double=black, double distance=0.4pt, ultra thick] (-6,-5) to[out=east, in=west] (0,4);
\draw[->-=.9, white, double=black, double distance=0.4pt, ultra thick] (-6,4) to[out=east, in=west] (0,-5);
\draw[white, double=black, double distance=0.4pt, ultra thick] (0,-5) to[out=east, in=west] (6,4);
\draw[white, double=black, double distance=0.4pt, ultra thick] (0,4) to[out=east, in=west] (6,-5);
\draw[fill=white] (-7,-7) rectangle (-6,-3);
\draw[fill=white] (-7,7) rectangle (-6,3);
\draw[fill=white] (7,-7) rectangle (6,-3);
\draw[fill=white] (7,7) rectangle (6,3);
\node at (0,6) [above]{$\scriptstyle{i+1}$};
\node at (0,-5) [below]{$\scriptstyle{n-i-1}$};
\node at (-7,-7) [left]{$\scriptstyle{1}$};
\node at (-7,7) [left]{$\scriptstyle{n}$};
\node at (7,-7) [right]{$\scriptstyle{1}$};
\node at (7,7) [right]{$\scriptstyle{n}$};
}\ }
+q^{-\frac{1}{2}}q^{-\frac{n}{3}}q^{\frac{i}{3}}(1-q)
\mathord{\ \tikz[baseline=-.6ex, scale=.1]{
\draw (-6,5) -- (-9,5);
\draw (-6,-5) -- (-9,-5);
\draw (6,5) -- (9,5);
\draw (6,-5) -- (9,-5);
\draw[->-=.5] (-6,6) -- (6,6);
\draw[->-=.5] (-6,4) -- (-2,0);
\draw[->-=.5] (-6,-5) -- (-2,0);
\draw[->-=.7] (2,0) -- (6,4);
\draw[->-=.7] (2,0) -- (6,-5);
\draw[-<-=.5] (-2,0) -- (2,0);
\draw[fill=white] (-7,-7) rectangle (-6,-3);
\draw[fill=white] (-7,7) rectangle (-6,3);
\draw[fill=white] (7,-7) rectangle (6,-3);
\draw[fill=white] (7,7) rectangle (6,3);
\node at (-7,-7)[left]{$\scriptstyle{1}$};
\node at (-7,7)[left]{$\scriptstyle{n}$};
\node at (7,-7)[right]{$\scriptstyle{1}$};
\node at (7,7)[right]{$\scriptstyle{n}$};
\node at (0,6)[above]{$\scriptstyle{n-1}$};
}\ }
\]
\end{LEM}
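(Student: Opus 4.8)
The plan is to read the crossing on the left-hand side as a \emph{full} positive twist of two same-oriented strands---the thickness-$1$ edge $S$ and the $n$-fold bundle---and to prove the identity by peeling a single strand off the $n$-bundle. Concretely, I would isolate the topmost strand $P$ of the bundle, the one adjacent to the $i$ edges that already run straight across the top, and note that the only place $P$ interacts nontrivially is in its full twist with $S$; everywhere else $P$ runs parallel to the straight $i$-bundle. This reduces the computation to resolving the full twist of the two thickness-$1$ strands $S$ and $P$, after which $P$ is either reabsorbed into the straight bundle (raising $i$ to $i+1$) or fused with $S$ through an $H$-web.

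First I would resolve this bare full twist. Writing the elementary positive crossing of two thickness-$1$ strands via the first relation of Definition~\ref{A2skein} as $c = q^{\frac13}I - q^{-\frac16}H$, where $I$ is the parallel smoothing and $H$ is the $H$-web, the full twist is $c^2$. Expanding and applying the bigon reduction $H^2 = [2]\,H$ (the internal $2$-gon created by stacking two $H$-webs is evaluated by the fourth relation of Definition~\ref{A2skein}) gives
\[
c^2 = q^{\frac23}I - 2q^{\frac16}H + q^{-\frac13}[2]\,H = q^{\frac23}\,I + q^{-\frac56}(1-q)\,H ,
\]
since $q^{-\frac13}[2] - 2q^{\frac16} = q^{\frac16} + q^{-\frac56} - 2q^{\frac16} = q^{-\frac56}(1-q)$.

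Finally I would reinsert this expansion into the full diagram and simplify against the $A_2$ clasps using Lemma~\ref{claspformula} and the vanishing of turnbacks from Lemma~\ref{vanish}. In the $I$-term the strand $P$ joins the straight bundle while $S$ retains its full twist with the remaining $n-i-1$ strands; this is exactly the first web on the right-hand side (with top label $i+1$), carrying the scalar $q^{\frac23}$. The $H$-term is the delicate one: after the $H$-smoothing fuses $S$ and $P$, the residual interaction of the fused edge with the $(n-i-1)$-bundle must be resolved against the Jones--Wenzl clasp, collapsing the configuration to the triangle web on the right. Matching against the stated coefficient forces this residual simplification to contribute the factor $q^{-\frac{n-i-1}{3}}$, so that $q^{-\frac56}(1-q)\cdot q^{-\frac{n-i-1}{3}} = q^{-\frac12}q^{-\frac n3}q^{\frac i3}(1-q)$. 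Verifying this last factor---that is, showing the fused strand braids past the residual bundle and is absorbed by the clasp to yield \emph{precisely} the triangle web with exactly the power $q^{-\frac{n-i-1}{3}}$, all other terms being killed by Lemma~\ref{vanish}---is the main obstacle, and I expect it to require the braiding and triangle identities of Lemma~\ref{claspformula} applied strand by strand.
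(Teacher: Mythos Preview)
Your approach is essentially the paper's: isolate the top strand $P$ of the $(n-i)$-bundle, expand its full twist with $S$ as $c^{2}=q^{2/3}I+q^{-5/6}(1-q)H$, and then clean up the $H$-term against the clasps. The paper carries out your acknowledged ``main obstacle'' in three short diagrammatic moves: after the $H$-resolution it is the legs of $S$ (not the middle $H$-edge) that still cross the $(n-i-1)$-bundle; pushing the trivalent vertices through the top clasps and then applying the bulk braiding identities of Lemma~\ref{claspformula} produces the extra scalar $q^{-\frac{2}{3}(n-i-1)}\cdot(-q^{1/6})^{2(n-i-1)}=q^{-\frac{n-i-1}{3}}$, exactly the factor you back-computed. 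Each step there is an equality rather than an expansion, so nothing needs to be ``killed'' by Lemma~\ref{vanish}; the endgame is a bit cleaner than your strand-by-strand expectation suggests, but the strategy is the same.
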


\begin{proof}
We will make a red mark on a place where we use the skein relation and Lemma~\ref{claspformula}.
\begin{align*}
&\mathord{ \tikz[baseline=-.6ex, scale=.1]{
\draw (-6,5) -- (-9,5);
\draw (-6,-5) -- (-9,-5);
\draw (6,5) -- (9,5);
\draw (6,-5) -- (9,-5);
\draw[->-=.5] (-6,6) -- (6,6);
\draw[->-=.9, white, double=black, double distance=0.4pt, ultra thick] (-6,-5) to[out=east, in=west] (0,4);
\draw[->-=.9, white, double=black, double distance=0.4pt, ultra thick] (-6,4) to[out=east, in=west] (0,-2);
\draw[->-=.9, white, double=black, double distance=0.4pt, ultra thick] (-6,2) to[out=east, in=west] (0,-5);
\draw[white, double=black, double distance=0.4pt, ultra thick] (0,-5) to[out=east, in=west] (6,2);
\draw[white, double=black, double distance=0.4pt, ultra thick] (0,-2) to[out=east, in=west] (6,4);
\draw[white, double=black, double distance=0.4pt, ultra thick] (0,4) to[out=east, in=west] (6,-5);
\draw[fill=white] (-7,-7) rectangle (-6,-3);
\draw[fill=white] (-7,7) rectangle (-6,1);
\draw[fill=white] (7,-7) rectangle (6,-3);
\draw[fill=white] (7,7) rectangle (6,1);
\node at (0,6) [above]{$\scriptstyle{i}$};
\node at (0,-2) [above]{$\scriptstyle{1}$};
\node at (0,-5) [below]{$\scriptstyle{n-i-1}$};
\node at (-7,-7) [left]{$\scriptstyle{1}$};
\node at (-7,7) [left]{$\scriptstyle{n}$};
\node at (7,-7) [right]{$\scriptstyle{1}$};
\node at (7,7) [right]{$\scriptstyle{n}$};
\fill[opacity=.2, magenta] (-3, 1) circle [radius=1];
\fill[opacity=.2, magenta] (3, 1) circle [radius=1];
}\ }\\
&=q^{\frac{2}{3}}
\mathord{ \tikz[baseline=-.6ex, scale=.1]{
\draw (-6,5) -- (-9,5);
\draw (-6,-5) -- (-9,-5);
\draw (6,5) -- (9,5);
\draw (6,-5) -- (9,-5);
\draw[->-=.5] (-6,6) -- (6,6);
\draw[->-=.9, white, double=black, double distance=0.4pt, ultra thick] (-6,-5) to[out=east, in=west] (0,4);
\draw[->-=.9, white, double=black, double distance=0.4pt, ultra thick] (-6,4) to[out=east, in=west] (0,-5);
\draw[white, double=black, double distance=0.4pt, ultra thick] (0,-5) to[out=east, in=west] (6,4);
\draw[white, double=black, double distance=0.4pt, ultra thick] (0,4) to[out=east, in=west] (6,-5);
\draw[fill=white] (-7,-7) rectangle (-6,-3);
\draw[fill=white] (-7,7) rectangle (-6,3);
\draw[fill=white] (7,-7) rectangle (6,-3);
\draw[fill=white] (7,7) rectangle (6,3);
\node at (0,6) [above]{$\scriptstyle{i+1}$};
\node at (0,-5) [below]{$\scriptstyle{n-i-1}$};
\node at (-7,-7) [left]{$\scriptstyle{1}$};
\node at (-7,7) [left]{$\scriptstyle{n}$};
\node at (7,-7) [right]{$\scriptstyle{1}$};
\node at (7,7) [right]{$\scriptstyle{n}$};
}\ }
+q^{-\frac{5}{6}}(1-q)
\mathord{ \tikz[baseline=-.6ex, scale=.1]{
\draw (-6,5) -- (-9,5);
\draw (-6,-5) -- (-9,-5);
\draw (6,5) -- (9,5);
\draw (6,-5) -- (9,-5);
\draw[->-=.5] (-6,6) -- (6,6);
\draw[->-=.4, white, double=black, double distance=0.4pt, ultra thick] (-6,-5) -- (-2,2);
\draw[->-=.9, white, double=black, double distance=0.4pt, ultra thick] (-6,2) to[out=east, in=west] (0,-5);
\draw[white, double=black, double distance=0.4pt, ultra thick] (0,-5) to[out=east, in=west] (6,2);
\draw[->-=.8, white, double=black, double distance=0.4pt, ultra thick] (2,2) -- (6,-5);
\draw[-<-=.5] (-2,2) -- (2,2);
\draw[->-=.5] (-6,4) -- (-2,2);
\draw[-<-=.5] (6,4) -- (2,2);
\draw[fill=white] (-7,-7) rectangle (-6,-3);
\draw[fill=white] (-7,7) rectangle (-6,1);
\draw[fill=white] (7,-7) rectangle (6,-3);
\draw[fill=white] (7,7) rectangle (6,1);
\node at (0,6) [above]{$\scriptstyle{i}$};
\node at (0,-5) [below]{$\scriptstyle{n-i-1}$};
\node at (-7,-7) [left]{$\scriptstyle{1}$};
\node at (-7,7) [left]{$\scriptstyle{n}$};
\node at (7,-7) [right]{$\scriptstyle{1}$};
\node at (7,7) [right]{$\scriptstyle{n}$};
\fill[opacity=.2, magenta] (-6, 1) rectangle (-5,5);
\fill[opacity=.2, magenta] (6, 1) rectangle (5,5);
}\ }\\
&=q^{\frac{2}{3}}
\mathord{ \tikz[baseline=-.6ex, scale=.1]{
\draw (-6,5) -- (-9,5);
\draw (-6,-5) -- (-9,-5);
\draw (6,5) -- (9,5);
\draw (6,-5) -- (9,-5);
\draw[->-=.5] (-6,6) -- (6,6);
\draw[->-=.9, white, double=black, double distance=0.4pt, ultra thick] (-6,-5) to[out=east, in=west] (0,4);
\draw[->-=.9, white, double=black, double distance=0.4pt, ultra thick] (-6,4) to[out=east, in=west] (0,-5);
\draw[white, double=black, double distance=0.4pt, ultra thick] (0,-5) to[out=east, in=west] (6,4);
\draw[white, double=black, double distance=0.4pt, ultra thick] (0,4) to[out=east, in=west] (6,-5);
\draw[fill=white] (-7,-7) rectangle (-6,-3);
\draw[fill=white] (-7,7) rectangle (-6,3);
\draw[fill=white] (7,-7) rectangle (6,-3);
\draw[fill=white] (7,7) rectangle (6,3);
\node at (0,6) [above]{$\scriptstyle{i+1}$};
\node at (0,-5) [below]{$\scriptstyle{n-i-1}$};
\node at (-7,-7) [left]{$\scriptstyle{1}$};
\node at (-7,7) [left]{$\scriptstyle{n}$};
\node at (7,-7) [right]{$\scriptstyle{1}$};
\node at (7,7) [right]{$\scriptstyle{n}$};
}\ }
+q^{-\frac{5}{6}}(1-q)q^{\frac{2}{3}(n-i-1)}
\mathord{ \tikz[baseline=-.6ex, scale=.1]{
\draw (-6,5) -- (-9,5);
\draw (-6,-5) -- (-9,-5);
\draw (6,5) -- (9,5);
\draw (6,-5) -- (9,-5);
\draw[->-=.5] (-6,6) -- (6,6);
\draw[white, double=black, double distance=0.4pt, ultra thick] (0,-4) to[out=east, in=west] (4,4) -- (6,4);
\draw[->-=.5] (-6,2) -- (-4,0);
\draw[->-=.5] (-6,-5) -- (-4,0);
\draw[-<-=.5, white, double=black, double distance=0.4pt, ultra thick] (-4,0) -- (4,0);
\draw[-<-=.5] (6,2) -- (4,0);
\draw[-<-=.5] (6,-5) -- (4,0);
\draw[->-=.9, white, double=black, double distance=0.4pt, ultra thick] (-6,4) -- (-4,4) to[out=east, in=west] (0,-4);
\draw[fill=white] (-7,-7) rectangle (-6,-3);
\draw[fill=white] (-7,7) rectangle (-6,1);
\draw[fill=white] (7,-7) rectangle (6,-3);
\draw[fill=white] (7,7) rectangle (6,1);
\node at (0,6) [above]{$\scriptstyle{i}$};
\node at (0,-4) [below]{$\scriptstyle{n-i-1}$};
\node at (-7,-7) [left]{$\scriptstyle{1}$};
\node at (-7,7) [left]{$\scriptstyle{n}$};
\node at (7,-7) [right]{$\scriptstyle{1}$};
\node at (7,7) [right]{$\scriptstyle{n}$};
\fill[opacity=.2, magenta] (-2, 0) circle [radius=1];
\fill[opacity=.2, magenta] (2, 0) circle [radius=1];
}\ }\\
&=q^{\frac{2}{3}}
\mathord{ \tikz[baseline=-.6ex, scale=.1]{
\draw (-6,5) -- (-9,5);
\draw (-6,-5) -- (-9,-5);
\draw (6,5) -- (9,5);
\draw (6,-5) -- (9,-5);
\draw[->-=.5] (-6,6) -- (6,6);
\draw[->-=.9, white, double=black, double distance=0.4pt, ultra thick] (-6,-5) to[out=east, in=west] (0,4);
\draw[->-=.9, white, double=black, double distance=0.4pt, ultra thick] (-6,4) to[out=east, in=west] (0,-5);
\draw[white, double=black, double distance=0.4pt, ultra thick] (0,-5) to[out=east, in=west] (6,4);
\draw[white, double=black, double distance=0.4pt, ultra thick] (0,4) to[out=east, in=west] (6,-5);
\draw[fill=white] (-7,-7) rectangle (-6,-3);
\draw[fill=white] (-7,7) rectangle (-6,3);
\draw[fill=white] (7,-7) rectangle (6,-3);
\draw[fill=white] (7,7) rectangle (6,3);
\node at (0,6) [above]{$\scriptstyle{i+1}$};
\node at (0,-5) [below]{$\scriptstyle{n-i-1}$};
\node at (-7,-7) [left]{$\scriptstyle{1}$};
\node at (-7,7) [left]{$\scriptstyle{n}$};
\node at (7,-7) [right]{$\scriptstyle{1}$};
\node at (7,7) [right]{$\scriptstyle{n}$};
}\ }
+q^{-\frac{5}{6}}(1-q)q^{-\frac{2}{3}(n-i-1)}(-q^{\frac{1}{6}})^{2(n-i-1)}
\mathord{ \tikz[baseline=-.6ex, scale=.1]{
\draw (-6,5) -- (-9,5);
\draw (-6,-5) -- (-9,-5);
\draw (6,5) -- (9,5);
\draw (6,-5) -- (9,-5);
\draw[->-=.5] (-6,6) -- (6,6);
\draw[white, double=black, double distance=0.4pt, ultra thick] (0,-4) to[out=east, in=west] (4,4) -- (6,4);
\draw[->-=.5] (-6,2) -- (-4,0);
\draw[->-=.5] (-6,-5) -- (-4,0);
\draw[-<-=.5, white, double=black, double distance=0.4pt, ultra thick] (-4,0) -- (4,0);
\draw[-<-=.5] (6,2) -- (4,0);
\draw[-<-=.5] (6,-5) -- (4,0);
\draw[->-=.9, white, double=black, double distance=0.4pt, ultra thick] (-6,4) -- (-4,4) to[out=east, in=west] (0,-4);
\draw[fill=white] (-1,-1) rectangle (-3,1);
\draw[fill=white] (1,-1) rectangle (3,1);
\draw (-1,-1) -- (-3,1);
\draw (1,-1) -- (3,1);
\draw[fill=white] (-7,-7) rectangle (-6,-3);
\draw[fill=white] (-7,7) rectangle (-6,1);
\draw[fill=white] (7,-7) rectangle (6,-3);
\draw[fill=white] (7,7) rectangle (6,1);
\node at (0,6) [above]{$\scriptstyle{i}$};
\node at (0,-4) [below]{$\scriptstyle{n-i-1}$};
\node at (-7,-7) [left]{$\scriptstyle{1}$};
\node at (-7,7) [left]{$\scriptstyle{n}$};
\node at (7,-7) [right]{$\scriptstyle{1}$};
\node at (7,7) [right]{$\scriptstyle{n}$};
\fill[opacity=.2, magenta] (-3,-4) rectangle (3,2);
}\ }\\
&=q^{\frac{2}{3}}
\mathord{ \tikz[baseline=-.6ex, scale=.1]{
\draw (-6,5) -- (-9,5);
\draw (-6,-5) -- (-9,-5);
\draw (6,5) -- (9,5);
\draw (6,-5) -- (9,-5);
\draw[->-=.5] (-6,6) -- (6,6);
\draw[->-=.9, white, double=black, double distance=0.4pt, ultra thick] (-6,-5) to[out=east, in=west] (0,4);
\draw[->-=.9, white, double=black, double distance=0.4pt, ultra thick] (-6,4) to[out=east, in=west] (0,-5);
\draw[white, double=black, double distance=0.4pt, ultra thick] (0,-5) to[out=east, in=west] (6,4);
\draw[white, double=black, double distance=0.4pt, ultra thick] (0,4) to[out=east, in=west] (6,-5);
\draw[fill=white] (-7,-7) rectangle (-6,-3);
\draw[fill=white] (-7,7) rectangle (-6,3);
\draw[fill=white] (7,-7) rectangle (6,-3);
\draw[fill=white] (7,7) rectangle (6,3);
\node at (0,6) [above]{$\scriptstyle{i+1}$};
\node at (0,-5) [below]{$\scriptstyle{n-i-1}$};
\node at (-7,-7) [left]{$\scriptstyle{1}$};
\node at (-7,7) [left]{$\scriptstyle{n}$};
\node at (7,-7) [right]{$\scriptstyle{1}$};
\node at (7,7) [right]{$\scriptstyle{n}$};
}\ }
+q^{-\frac{5}{6}}(1-q)q^{\frac{2}{3}(n-i-1)}(-q^{\frac{1}{6}})^{2(n-i-1)}
\mathord{\ \tikz[baseline=-.6ex, scale=.1]{
\draw (-6,5) -- (-9,5);
\draw (-6,-5) -- (-9,-5);
\draw (6,5) -- (9,5);
\draw (6,-5) -- (9,-5);
\draw[->-=.5] (-6,6) -- (6,6);
\draw[->-=.5] (-6,4) -- (-2,0);
\draw[->-=.5] (-6,-5) -- (-2,0);
\draw[->-=.7] (2,0) -- (6,4);
\draw[->-=.7] (2,0) -- (6,-5);
\draw[-<-=.5] (-2,0) -- (2,0);
\draw[fill=white] (-7,-7) rectangle (-6,-3);
\draw[fill=white] (-7,7) rectangle (-6,3);
\draw[fill=white] (7,-7) rectangle (6,-3);
\draw[fill=white] (7,7) rectangle (6,3);
\node at (-7,-7)[left]{$\scriptstyle{1}$};
\node at (-7,7)[left]{$\scriptstyle{n}$};
\node at (7,-7)[right]{$\scriptstyle{1}$};
\node at (7,7)[right]{$\scriptstyle{n}$};
\node at (0,6)[above]{$\scriptstyle{n-1}$};
}\ }
\end{align*}
\end{proof}

\begin{LEM}\label{lem2}
Let $n$ be a positive integer.
\[
\mathord{ \tikz[baseline=-.6ex, scale=.1]{
\draw (-6,5) -- (-9,5);
\draw (-6,-5) -- (-9,-5);
\draw (6,5) -- (9,5);
\draw (6,-5) -- (9,-5);
\draw[->-=.9, white, double=black, double distance=0.4pt, ultra thick] (-6,-5) to[out=east, in=west] (0,5);
\draw[->-=.9, white, double=black, double distance=0.4pt, ultra thick] (-6,5) to[out=east, in=west] (0,-5);
\draw[white, double=black, double distance=0.4pt, ultra thick] (0,-5) to[out=east, in=west] (6,5);
\draw[white, double=black, double distance=0.4pt, ultra thick] (0,5) to[out=east, in=west] (6,-5);
\draw[fill=white] (-7,-7) rectangle (-6,-3);
\draw[fill=white] (-7,7) rectangle (-6,3);
\draw[fill=white] (7,-7) rectangle (6,-3);
\draw[fill=white] (7,7) rectangle (6,3);
\node at (-7,-7) [left]{$\scriptstyle{1}$};
\node at (-7,7) [left]{$\scriptstyle{n}$};
\node at (7,-7) [right]{$\scriptstyle{1}$};
\node at (7,7) [right]{$\scriptstyle{n}$};
}\ }
=q^{\frac{2n}{3}}
\mathord{ \tikz[baseline=-.6ex, scale=.1]{
\draw (-6,5) -- (-9,5);
\draw (-6,-5) -- (-9,-5);
\draw (6,5) -- (9,5);
\draw (6,-5) -- (9,-5);
\draw[->-=.5] (-6,5) -- (6,5);
\draw[->-=.5] (-6,-5) -- (6,-5);
\draw[fill=white] (-7,-7) rectangle (-6,-3);
\draw[fill=white] (-7,7) rectangle (-6,3);
\draw[fill=white] (7,-7) rectangle (6,-3);
\draw[fill=white] (7,7) rectangle (6,3);
\node at (-7,-7) [left]{$\scriptstyle{1}$};
\node at (-7,7) [left]{$\scriptstyle{n}$};
\node at (7,-7) [right]{$\scriptstyle{1}$};
\node at (7,7) [right]{$\scriptstyle{n}$};
}\ }
+q^{-\frac{1}{2}}q^{-\frac{n}{3}}(1-q^{n})
\mathord{\ \tikz[baseline=-.6ex, scale=.1]{
\draw (-6,5) -- (-9,5);
\draw (-6,-5) -- (-9,-5);
\draw (6,5) -- (9,5);
\draw (6,-5) -- (9,-5);
\draw[->-=.5] (-6,6) -- (6,6);
\draw[->-=.5] (-6,4) -- (-2,0);
\draw[->-=.5] (-6,-5) -- (-2,0);
\draw[->-=.7] (2,0) -- (6,4);
\draw[->-=.7] (2,0) -- (6,-5);
\draw[-<-=.5] (-2,0) -- (2,0);
\draw[fill=white] (-7,-7) rectangle (-6,-3);
\draw[fill=white] (-7,7) rectangle (-6,3);
\draw[fill=white] (7,-7) rectangle (6,-3);
\draw[fill=white] (7,7) rectangle (6,3);
\node at (-7,-7)[left]{$\scriptstyle{1}$};
\node at (-7,7)[left]{$\scriptstyle{n}$};
\node at (7,-7)[right]{$\scriptstyle{1}$};
\node at (7,7)[right]{$\scriptstyle{n}$};
\node at (0,6)[above]{$\scriptstyle{n-1}$};
}\ }\
\]
\end{LEM}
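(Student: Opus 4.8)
The plan is to obtain this identity by iterating Lemma~\ref{lem1} and summing a finite geometric series, exactly in the spirit of Proposition~\ref{partition}. Write $\sigma(i)$ for the web on the left-hand side of Lemma~\ref{lem1}, that is, the full twist of an $n$-strand over a $1$-strand in which $i$ strands have been drawn straight across the top (the horizontal strand labeled $i$) while the remaining $n-i$ strands stay in the twist region. For $i=0$ the horizontal strand is empty, so $\sigma(0)$ is precisely the plain full twist appearing on the left-hand side of the present lemma; for $i=n$ the twist region is empty, so $\sigma(n)$ is simply the pair of parallel, uncrossed $n$- and $1$-strands, which is the first web on the right-hand side. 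Let $W$ denote the single ladder-type web occurring in the second term of Lemma~\ref{lem1} (the one carrying the label $n-1$ along its top edge); the content of Lemma~\ref{lem1} is that $W$ itself does not depend on $i$, only its scalar coefficient does.

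First I would record Lemma~\ref{lem1} in the recursive form
\[
\sigma(i)=q^{\frac{2}{3}}\sigma(i+1)+q^{-\frac{1}{2}}q^{-\frac{n}{3}}q^{\frac{i}{3}}(1-q)\,W,
\qquad 0\le i\le n-1,
\]
and then telescope it from $i=0$ to $i=n-1$. Since the correction introduced at stage $i$ is carried through the $i$ preceding factors of $q^{\frac{2}{3}}$, this yields
\[
\sigma(0)=q^{\frac{2n}{3}}\sigma(n)+\left(\sum_{i=0}^{n-1}q^{\frac{2i}{3}}\,q^{-\frac{1}{2}}q^{-\frac{n}{3}}q^{\frac{i}{3}}(1-q)\right)W.
\]
The leading term $q^{\frac{2n}{3}}\sigma(n)$ is exactly the first term on the right-hand side of the lemma, with its coefficient $q^{\frac{2n}{3}}$.

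It then remains to evaluate the scalar in front of $W$. In the summand the powers combine as $q^{\frac{2i}{3}}q^{\frac{i}{3}}=q^{i}$, so the sum collapses to a finite geometric series:
\[
q^{-\frac{1}{2}}q^{-\frac{n}{3}}(1-q)\sum_{i=0}^{n-1}q^{i}
=q^{-\frac{1}{2}}q^{-\frac{n}{3}}(1-q)\,\frac{1-q^{n}}{1-q}
=q^{-\frac{1}{2}}q^{-\frac{n}{3}}(1-q^{n}).
\]
The factor $(1-q)$ cancels, leaving precisely the coefficient of the second web in the statement, and $W$ is that very web. This proves the lemma. The only points demanding care are bookkeeping: confirming from Lemma~\ref{lem1} that the correction web $W$ is genuinely independent of $i$ (so that the $n$ corrections may legitimately be added together), and tracking the exponents of $q$ through the telescoping so that the $q^{\frac{2i}{3}}\cdot q^{\frac{i}{3}}$ collapse to $q^{i}$ and give a clean geometric series. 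There is no geometric obstacle beyond Lemma~\ref{lem1} itself.
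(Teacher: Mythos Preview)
Your argument is correct and coincides with the paper's proof: both iterate Lemma~\ref{lem1} from $i=0$ up to $i=n$. The paper packages the iteration by observing that the quantity $q^{\frac{2i}{3}}\sigma(i)+q^{-\frac{1}{2}}q^{-\frac{n}{3}}(1-q^{i})W$ is independent of $i$, whereas you sum the correction terms directly as a geometric series; these are two phrasings of the same telescoping, and the only minor point to note is that Lemma~\ref{lem1} is stated for positive $i$ but its proof goes through verbatim at $i=0$.
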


\begin{proof}
By using Lemma~\ref{lem1}, we obtain
\begin{align*}
&q^{\frac{2i}{3}}
\mathord{ \tikz[baseline=-.6ex, scale=.1]{
\draw (-6,5) -- (-9,5);
\draw (-6,-5) -- (-9,-5);
\draw (6,5) -- (9,5);
\draw (6,-5) -- (9,-5);
\draw[->-=.5] (-6,6) -- (6,6);
\draw[->-=.9, white, double=black, double distance=0.4pt, ultra thick] (-6,-5) to[out=east, in=west] (0,4);
\draw[->-=.9, white, double=black, double distance=0.4pt, ultra thick] (-6,4) to[out=east, in=west] (0,-5);
\draw[white, double=black, double distance=0.4pt, ultra thick] (0,-5) to[out=east, in=west] (6,4);
\draw[white, double=black, double distance=0.4pt, ultra thick] (0,4) to[out=east, in=west] (6,-5);
\draw[fill=white] (-7,-7) rectangle (-6,-3);
\draw[fill=white] (-7,7) rectangle (-6,3);
\draw[fill=white] (7,-7) rectangle (6,-3);
\draw[fill=white] (7,7) rectangle (6,3);
\node at (0,6) [above]{$\scriptstyle{i}$};
\node at (0,-5) [below]{$\scriptstyle{n-i}$};
\node at (-7,-7) [left]{$\scriptstyle{1}$};
\node at (-7,7) [left]{$\scriptstyle{n}$};
\node at (7,-7) [right]{$\scriptstyle{1}$};
\node at (7,7) [right]{$\scriptstyle{n}$};
}\ }
+q^{-\frac{1}{2}}q^{-\frac{n}{3}}\left(1-q^{i}\right)
\mathord{\ \tikz[baseline=-.6ex, scale=.1]{
\draw (-6,5) -- (-9,5);
\draw (-6,-5) -- (-9,-5);
\draw (6,5) -- (9,5);
\draw (6,-5) -- (9,-5);
\draw[->-=.5] (-6,6) -- (6,6);
\draw[->-=.5] (-6,4) -- (-2,0);
\draw[->-=.5] (-6,-5) -- (-2,0);
\draw[->-=.7] (2,0) -- (6,4);
\draw[->-=.7] (2,0) -- (6,-5);
\draw[-<-=.5] (-2,0) -- (2,0);
\draw[fill=white] (-7,-7) rectangle (-6,-3);
\draw[fill=white] (-7,7) rectangle (-6,3);
\draw[fill=white] (7,-7) rectangle (6,-3);
\draw[fill=white] (7,7) rectangle (6,3);
\node at (-7,-7)[left]{$\scriptstyle{1}$};
\node at (-7,7)[left]{$\scriptstyle{n}$};
\node at (7,-7)[right]{$\scriptstyle{1}$};
\node at (7,7)[right]{$\scriptstyle{n}$};
\node at (0,6)[above]{$\scriptstyle{n-1}$};
}\ }\\
&\quad=q^{\frac{2(i+1)}{3}}
\mathord{ \tikz[baseline=-.6ex, scale=.1]{
\draw (-6,5) -- (-9,5);
\draw (-6,-5) -- (-9,-5);
\draw (6,5) -- (9,5);
\draw (6,-5) -- (9,-5);
\draw[->-=.5] (-6,6) -- (6,6);
\draw[->-=.9, white, double=black, double distance=0.4pt, ultra thick] (-6,-5) to[out=east, in=west] (0,4);
\draw[->-=.9, white, double=black, double distance=0.4pt, ultra thick] (-6,4) to[out=east, in=west] (0,-5);
\draw[white, double=black, double distance=0.4pt, ultra thick] (0,-5) to[out=east, in=west] (6,4);
\draw[white, double=black, double distance=0.4pt, ultra thick] (0,4) to[out=east, in=west] (6,-5);
\draw[fill=white] (-7,-7) rectangle (-6,-3);
\draw[fill=white] (-7,7) rectangle (-6,3);
\draw[fill=white] (7,-7) rectangle (6,-3);
\draw[fill=white] (7,7) rectangle (6,3);
\node at (0,6) [above]{$\scriptstyle{i+1}$};
\node at (0,-5) [below]{$\scriptstyle{n-i-1}$};
\node at (-7,-7) [left]{$\scriptstyle{1}$};
\node at (-7,7) [left]{$\scriptstyle{n}$};
\node at (7,-7) [right]{$\scriptstyle{1}$};
\node at (7,7) [right]{$\scriptstyle{n}$};
}\ }
+q^{-\frac{1}{2}}q^{-\frac{n}{3}}\left(1-q^{i+1}\right)
\mathord{\ \tikz[baseline=-.6ex, scale=.1]{
\draw (-6,5) -- (-9,5);
\draw (-6,-5) -- (-9,-5);
\draw (6,5) -- (9,5);
\draw (6,-5) -- (9,-5);
\draw[->-=.5] (-6,6) -- (6,6);
\draw[->-=.5] (-6,4) -- (-2,0);
\draw[->-=.5] (-6,-5) -- (-2,0);
\draw[->-=.7] (2,0) -- (6,4);
\draw[->-=.7] (2,0) -- (6,-5);
\draw[-<-=.5] (-2,0) -- (2,0);
\draw[fill=white] (-7,-7) rectangle (-6,-3);
\draw[fill=white] (-7,7) rectangle (-6,3);
\draw[fill=white] (7,-7) rectangle (6,-3);
\draw[fill=white] (7,7) rectangle (6,3);
\node at (-7,-7)[left]{$\scriptstyle{1}$};
\node at (-7,7)[left]{$\scriptstyle{n}$};
\node at (7,-7)[right]{$\scriptstyle{1}$};
\node at (7,7)[right]{$\scriptstyle{n}$};
\node at (0,6)[above]{$\scriptstyle{n-1}$};
}\ }.
\end{align*}
We repeatedly apply the above equation to the RHS of Lemma~\ref{lem1} at $i=0$.
\end{proof}

\begin{LEM}\label{lem3}
Let $i$ and $j$ be positive integers.
\[
\mathord{ \tikz[baseline=-.6ex, scale=.1]{
\draw (-6,5) -- (-9,5);
\draw (-6,-5) -- (-9,-5);
\draw (6,5) -- (9,5);
\draw (6,-5) -- (9,-5);
\draw[->-=.9, white, double=black, double distance=0.4pt, ultra thick] (-6,-5) to[out=east, in=west] (0,5);
\draw[->-=.9, white, double=black, double distance=0.4pt, ultra thick] (-6,5) to[out=east, in=west] (0,-5);
\draw[white, double=black, double distance=0.4pt, ultra thick] (0,-5) to[out=east, in=west] (6,5);
\draw[white, double=black, double distance=0.4pt, ultra thick] (0,5) to[out=east, in=west] (6,-5);
\draw[fill=white] (-7,-7) rectangle (-6,-3);
\draw[fill=white] (-7,7) rectangle (-6,3);
\draw[fill=white] (7,-7) rectangle (6,-3);
\draw[fill=white] (7,7) rectangle (6,3);
\node at (0,5) [above]{$\scriptstyle{i}$};
\node at (0,-5) [below]{$\scriptstyle{j}$};
\node at (-7,-7) [left]{$\scriptstyle{i}$};
\node at (-7,7) [left]{$\scriptstyle{j}$};
\node at (7,-7) [right]{$\scriptstyle{i}$};
\node at (7,7) [right]{$\scriptstyle{j}$};
}\ }
=q^{\frac{2j}{3}}
\mathord{ \tikz[baseline=-.6ex, scale=.1]{
\draw (-6,5) -- (-9,5);
\draw (-6,-5) -- (-9,-5);
\draw (6,5) -- (9,5);
\draw (6,-5) -- (9,-5);
\draw[->-=.5] (-6,-6) -- (6,-6);
\draw[->-=.9, white, double=black, double distance=0.4pt, ultra thick] (-6,-4) to[out=east, in=west] (0,5);
\draw[->-=.9, white, double=black, double distance=0.4pt, ultra thick] (-6,5) to[out=east, in=west] (0,-4);
\draw[white, double=black, double distance=0.4pt, ultra thick] (0,-4) to[out=east, in=west] (6,5);
\draw[white, double=black, double distance=0.4pt, ultra thick] (0,5) to[out=east, in=west] (6,-4);
\draw[fill=white] (-7,-7) rectangle (-6,-3);
\draw[fill=white] (-7,7) rectangle (-6,3);
\draw[fill=white] (7,-7) rectangle (6,-3);
\draw[fill=white] (7,7) rectangle (6,3);
\node at (0,5) [above]{$\scriptstyle{i-1}$};
\node at (0,-4) [above]{$\scriptstyle{j}$};
\node at (0,-6) [below]{$\scriptstyle{1}$};
\node at (-7,-7) [left]{$\scriptstyle{i}$};
\node at (-7,7) [left]{$\scriptstyle{j}$};
\node at (7,-7) [right]{$\scriptstyle{i}$};
\node at (7,7) [right]{$\scriptstyle{j}$};
}\ }
+q^{-\frac{1}{6}}q^{-\frac{i+j}{3}}(1-q^{j})
\mathord{ \tikz[baseline=-.6ex, scale=.1]{
\draw (-17,5) -- (-14,5);
\draw (-17,-5) -- (-14,-5);
\draw (17,5) -- (14,5);
\draw (17,-5) -- (14,-5);
\draw[->-=.5] (-14,6) -- (-6,6);
\draw[->-=.5] (-14,-6) -- (-6,-6);
\draw[->-=.5] (-7,0) -- (-11,0);
\draw[->-=.5] (-14,4) -- (-11,0);
\draw[->-=.5] (-14,-4) -- (-11,0);
\draw[-<-=.5] (14,6) -- (6,6);
\draw[-<-=.5] (14,-6) -- (6,-6);
\draw[-<-=.5] (7,0) -- (11,0);
\draw[-<-=.5] (14,4) -- (11,0);
\draw[-<-=.5] (14,-4) -- (11,0);
\draw[-<-=.5] (-6,-6) -- (6,-6);
\draw[->-=.9, white, double=black, double distance=0.4pt, ultra thick] (-6,-1) to[out=east, in=west] (0,6);
\draw[->-=.9, white, double=black, double distance=0.4pt, ultra thick] (-6,6) to[out=east, in=west] (0,-1);
\draw[white, double=black, double distance=0.4pt, ultra thick] (0,-1) to[out=east, in=west] (6,6);
\draw[white, double=black, double distance=0.4pt, ultra thick] (0,6) to[out=east, in=west] (6,-1);
\draw[fill=white] (-7,8) rectangle (-6,4);
\draw[fill=white] (-7,-8) rectangle (-6,2);
\draw[fill=white] (7,8) rectangle (6,4);
\draw[fill=white] (7,-8) rectangle (6,2);
\node at (3,-5)[below]{$\scriptstyle{1}$};
\node at (0,6)[above]{$\scriptstyle{i-1}$};
\draw[fill=white] (-14,-8) rectangle (-15,-2);
\draw[fill=white] (-14,8) rectangle (-15,2);
\draw[fill=white] (14,-8) rectangle (15,-2);
\draw[fill=white] (14,8) rectangle (15,2);
\node at (-10,6)[above]{$\scriptstyle{j-1}$};
\node at (-10,-6)[below]{$\scriptstyle{i-1}$};
\node at (10,6)[above]{$\scriptstyle{j-1}$};
\node at (10,-6)[below]{$\scriptstyle{i-1}$};
\node at (-17,5)[above]{$\scriptstyle{j}$};
\node at (-17,-5)[below]{$\scriptstyle{i}$};
\node at (17,5)[above]{$\scriptstyle{j}$};
\node at (17,-5)[below]{$\scriptstyle{i}$};
}\ }
\]
\end{LEM}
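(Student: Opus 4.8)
The plan is to reduce Lemma~\ref{lem3} to the single-strand case already established in Lemma~\ref{lem2} by peeling off one strand from the \(i\)-colored bundle. First I would expand the \(A_2\) clasp coloring the bottom (\(i\)-colored) strand, separating it into an \((i-1)\)-colored sub-bundle and a single strand \(s\). Since the two sub-bundles run parallel inside the full twist, \(s\) does not link the \((i-1)\)-bundle but does wind once around the entire \(j\)-colored bundle, so the restriction of the full twist to \(s\) and the \(j\)-bundle is exactly the full twist of a \(1\)- and a \(j\)-colored strand treated in Lemma~\ref{lem2}.

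Next I would apply Lemma~\ref{lem2} with \(n=j\) to this sub-configuration. This produces two terms: the \emph{straight} term with coefficient \(q^{\frac{2j}{3}}\), in which \(s\) passes parallel to the \(j\)-bundle, and the \emph{fork} term with coefficient \(q^{-\frac{1}{2}}q^{-\frac{j}{3}}(1-q^{j})\), in which \(s\) is joined to the \(j\)-bundle through a pair of trivalent vertices. In the straight term, \(s\) can be isotoped below the remaining \((i-1)\)-over-\(j\) full twist without introducing any crossing, recombining into the first diagram on the right-hand side and contributing precisely the factor \(q^{\frac{2j}{3}}\).

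For the fork term I would slide \(s\) past the parallel \((i-1)\)-bundle; by the braiding formula in Lemma~\ref{claspformula} (with \(m=1\) and the \((i-1)\)-bundle) this costs an extra factor \(q^{-\frac{i-1}{3}}\), so the total coefficient becomes \(q^{-\frac{1}{2}}q^{-\frac{j}{3}}q^{-\frac{i-1}{3}}(1-q^{j})=q^{-\frac{1}{6}}q^{-\frac{i+j}{3}}(1-q^{j})\), matching the stated coefficient. It then remains to recognize the resulting web, with its two trivalent vertices absorbed into the neighbouring clasps, as the triangle/stair-step diagram appearing in the second term; this is a direct application of the clasp-absorption and triangle identities of Lemma~\ref{claspformula}, together with the remark that the interior clasps are removable.

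The main obstacle I expect is the geometric bookkeeping in the fork term: one must choose the peeling (which strand of the \(i\)-bundle, and on which side of the bundle) so that the straight term genuinely avoids any crossing with the \((i-1)\)-bundle while the fork term picks up exactly one \(q^{-\frac{i-1}{3}}\) braiding factor, and then correctly identify the tangle of trivalent vertices and clasps with the triangles in the target diagram. The coefficient check above is reassuring, but converting the fork configuration into the precise web on the right-hand side via Lemma~\ref{claspformula} is where the care is needed.
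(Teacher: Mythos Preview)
Your proposal is correct and follows essentially the same route as the paper: peel off one strand from the $i$-bundle, apply Lemma~\ref{lem2} with $n=j$ to that strand and the $j$-bundle, and then clean up the fork term using the braiding and clasp identities of Lemma~\ref{claspformula}. The one place where the paper is organised slightly differently is the fork term: rather than a single braiding move contributing $q^{-\frac{i-1}{3}}$, the paper first unbraids the $(i-1)$-bundle from the forked strand (picking up $q^{-\frac{2}{3}(i-1)}$), then inserts intermediate clasps and passes the horizontal edge through them (picking up $(-q^{\frac{1}{6}})^{2(i-1)}$), with the same net factor $q^{-\frac{i-1}{3}}$; your own caveat about the ``geometric bookkeeping'' is exactly this step.
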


\begin{proof}
By Lemma~\ref{lem2},
\begin{align*}
\mathord{ \tikz[baseline=-.6ex, scale=.1]{
\draw (-6,5) -- (-9,5);
\draw (-6,-5) -- (-9,-5);
\draw (6,5) -- (9,5);
\draw (6,-5) -- (9,-5);
\draw[->-=.9, white, double=black, double distance=0.4pt, ultra thick] (-6,-6) to[out=east, in=west] (0,1);
\draw[->-=.9, white, double=black, double distance=0.4pt, ultra thick] (-6,-3) to[out=east, in=west] (0,5);
\draw[->-=.9, white, double=black, double distance=0.4pt, ultra thick] (-6,5) to[out=east, in=west] (0,-6);
\draw[white, double=black, double distance=0.4pt, ultra thick] (0,-6) to[out=east, in=west] (6,5);
\draw[white, double=black, double distance=0.4pt, ultra thick] (0,5) to[out=east, in=west] (6,-3);
\draw[white, double=black, double distance=0.4pt, ultra thick] (0,1) to[out=east, in=west] (6,-6);
\draw[fill=white] (-7,7) rectangle (-6,3);
\draw[fill=white] (-7,-7) rectangle (-6,-2);
\draw[fill=white] (7,7) rectangle (6,3);
\draw[fill=white] (7,-7) rectangle (6,-2);
\node at (0,-6) [above]{$\scriptstyle{j}$};
\node at (0,1) [above]{$\scriptstyle{1}$};
\node at (0,5) [above]{$\scriptstyle{i-1}$};
\node at (-7,7) [left]{$\scriptstyle{j}$};
\node at (-7,-7) [left]{$\scriptstyle{i}$};
\node at (7,7) [right]{$\scriptstyle{j}$};
\node at (7,-7) [right]{$\scriptstyle{i}$};
}\ }
&=q^{\frac{2j}{3}}
\mathord{ \tikz[baseline=-.6ex, scale=.1]{
\draw (-6,5) -- (-9,5);
\draw (-6,-5) -- (-9,-5);
\draw (6,5) -- (9,5);
\draw (6,-5) -- (9,-5);
\draw[->-=.5] (-6,-6) -- (6,-6);
\draw[->-=.9, white, double=black, double distance=0.4pt, ultra thick] (-6,-4) to[out=east, in=west] (0,5);
\draw[->-=.9, white, double=black, double distance=0.4pt, ultra thick] (-6,5) to[out=east, in=west] (0,-4);
\draw[white, double=black, double distance=0.4pt, ultra thick] (0,-4) to[out=east, in=west] (6,5);
\draw[white, double=black, double distance=0.4pt, ultra thick] (0,5) to[out=east, in=west] (6,-4);
\draw[fill=white] (-7,-7) rectangle (-6,-3);
\draw[fill=white] (-7,7) rectangle (-6,3);
\draw[fill=white] (7,-7) rectangle (6,-3);
\draw[fill=white] (7,7) rectangle (6,3);
\node at (0,5) [above]{$\scriptstyle{i-1}$};
\node at (0,-4) [above]{$\scriptstyle{j}$};
\node at (0,-6) [below]{$\scriptstyle{1}$};
\node at (-7,-7) [left]{$\scriptstyle{i}$};
\node at (-7,7) [left]{$\scriptstyle{j}$};
\node at (7,-7) [right]{$\scriptstyle{i}$};
\node at (7,7) [right]{$\scriptstyle{j}$};
}\ }
+q^{-\frac{1}{2}}q^{-\frac{j}{3}}\left(1-q^{j}\right)
\mathord{ \tikz[baseline=-.6ex, scale=.1]{
\draw (-6,5) -- (-9,5);
\draw (-6,-5) -- (-9,-5);
\draw (6,5) -- (9,5);
\draw (6,-5) -- (9,-5);
\draw (0,2) to[out=east, in=west] (6,6);
\draw (0,0) -- (3,0);
\draw[->-=.3, white, double=black, double distance=0.4pt, ultra thick] (3,0) -- (6,-6);
\draw[white, double=black, double distance=0.4pt, ultra thick] (-6,-3) -- (-3,-3) to[out=east, in=west] (0,6);
\draw[->-=.5, white, double=black, double distance=0.4pt, ultra thick] (-6,6) to[out=east, in=west] (0,2);
\draw[-<-=.9, white, double=black, double distance=0.4pt, ultra thick] (-3,0) -- (0,0);
\draw[->-=.2, white, double=black, double distance=0.4pt, ultra thick] (0,6) to[out=east, in=west] (3,-3) -- (6,-3);
\draw[->-=.8, white, double=black, double distance=0.4pt, ultra thick] (-6,-6) -- (-3,0);
\draw[-<-=.7] (6,4) -- (3,0);
\draw[->-=.8] (-6,4) -- (-3,0);
\draw[fill=white] (-7,-7) rectangle (-6,-2);
\draw[fill=white] (-7,7) rectangle (-6,2);
\draw[fill=white] (7,-7) rectangle (6,-2);
\draw[fill=white] (7,7) rectangle (6,2);
\node at (2,6) [above]{$\scriptstyle{i-1}$};
\node at (-4,6) [above]{$\scriptstyle{j-1}$};
\node at (-7,-7) [left]{$\scriptstyle{i}$};
\node at (-7,7) [left]{$\scriptstyle{j}$};
\node at (7,-7) [right]{$\scriptstyle{i}$};
\node at (7,7) [right]{$\scriptstyle{j}$};
\fill[opacity=.2, magenta] (-4,-3) circle[radius=1];
\fill[opacity=.2, magenta] (4,-3) circle[radius=1];
}\ }\\
&=q^{\frac{2j}{3}}
\mathord{ \tikz[baseline=-.6ex, scale=.1]{
\draw (-6,5) -- (-9,5);
\draw (-6,-5) -- (-9,-5);
\draw (6,5) -- (9,5);
\draw (6,-5) -- (9,-5);
\draw[->-=.5] (-6,-6) -- (6,-6);
\draw[->-=.9, white, double=black, double distance=0.4pt, ultra thick] (-6,-4) to[out=east, in=west] (0,5);
\draw[->-=.9, white, double=black, double distance=0.4pt, ultra thick] (-6,5) to[out=east, in=west] (0,-4);
\draw[white, double=black, double distance=0.4pt, ultra thick] (0,-4) to[out=east, in=west] (6,5);
\draw[white, double=black, double distance=0.4pt, ultra thick] (0,5) to[out=east, in=west] (6,-4);
\draw[fill=white] (-7,-7) rectangle (-6,-3);
\draw[fill=white] (-7,7) rectangle (-6,3);
\draw[fill=white] (7,-7) rectangle (6,-3);
\draw[fill=white] (7,7) rectangle (6,3);
\node at (0,5) [above]{$\scriptstyle{i-1}$};
\node at (0,-4) [above]{$\scriptstyle{j}$};
\node at (0,-6) [below]{$\scriptstyle{1}$};
\node at (-7,-7) [left]{$\scriptstyle{i}$};
\node at (-7,7) [left]{$\scriptstyle{j}$};
\node at (7,-7) [right]{$\scriptstyle{i}$};
\node at (7,7) [right]{$\scriptstyle{j}$};
}\ }
+q^{-\frac{1}{2}}q^{-\frac{j}{3}}\left(1-q^{j}\right)q^{-\frac{2}{3}(i-1)}
\mathord{ \tikz[baseline=-.6ex, scale=.1]{
\draw (-6,5) -- (-9,5);
\draw (-6,-5) -- (-9,-5);
\draw (6,5) -- (9,5);
\draw (6,-5) -- (9,-5);
\draw (0,2) to[out=east, in=west] (6,6);
\draw (0,0) -- (4,0);
\draw[->-=.7, white, double=black, double distance=0.4pt, ultra thick] (4,0) -- (6,-3);
\draw[white, double=black, double distance=0.4pt, ultra thick] (-6,-6) -- (-4,-6) to[out=east, in=west] (0,6);
\draw[->-=.5, white, double=black, double distance=0.4pt, ultra thick] (-6,6) to[out=east, in=west] (0,2);
\draw[-<-=.9, white, double=black, double distance=0.4pt, ultra thick] (-4,0) -- (0,0);
\draw[->-=.2, white, double=black, double distance=0.4pt, ultra thick] (0,6) to[out=east, in=west] (4,-6) -- (6,-6);
\draw[->-=.6, white, double=black, double distance=0.4pt, ultra thick] (-6,-3) -- (-4,0);
\draw[-<-=.6] (6,3) -- (4,0);
\draw[->-=.6] (-6,3) -- (-4,0);
\draw[fill=white] (-7,-7) rectangle (-6,-2);
\draw[fill=white] (-7,7) rectangle (-6,2);
\draw[fill=white] (7,-7) rectangle (6,-2);
\draw[fill=white] (7,7) rectangle (6,2);
\node at (2,6) [above]{$\scriptstyle{i-1}$};
\node at (-4,6) [above]{$\scriptstyle{j-1}$};
\node at (-7,-7) [left]{$\scriptstyle{i}$};
\node at (-7,7) [left]{$\scriptstyle{j}$};
\node at (7,-7) [right]{$\scriptstyle{i}$};
\node at (7,7) [right]{$\scriptstyle{j}$};
}\ }.
\end{align*}
As we remarked before Proposition~\ref{paralleleq}, 
the definition of $A_{2}$ clasp and its property says that
\[
\mathord{ \tikz[baseline=-.6ex, scale=.1]{
\draw (-6,5) -- (-9,5);
\draw (-6,-5) -- (-9,-5);
\draw (6,5) -- (9,5);
\draw (6,-5) -- (9,-5);
\draw (0,2) to[out=east, in=west] (6,6);
\draw (0,0) -- (4,0);
\draw[->-=.7, white, double=black, double distance=0.4pt, ultra thick] (4,0) -- (6,-3);
\draw[white, double=black, double distance=0.4pt, ultra thick] (-6,-6) -- (-4,-6) to[out=east, in=west] (0,6);
\draw[->-=.5, white, double=black, double distance=0.4pt, ultra thick] (-6,6) to[out=east, in=west] (0,2);
\draw[-<-=.9, white, double=black, double distance=0.4pt, ultra thick] (-4,0) -- (0,0);
\draw[->-=.2, white, double=black, double distance=0.4pt, ultra thick] (0,6) to[out=east, in=west] (4,-6) -- (6,-6);
\draw[->-=.6, white, double=black, double distance=0.4pt, ultra thick] (-6,-3) -- (-4,0);
\draw[-<-=.6] (6,3) -- (4,0);
\draw[->-=.6] (-6,3) -- (-4,0);
\draw[fill=white] (-7,-7) rectangle (-6,-2);
\draw[fill=white] (-7,7) rectangle (-6,2);
\draw[fill=white] (7,-7) rectangle (6,-2);
\draw[fill=white] (7,7) rectangle (6,2);
\node at (2,6) [above]{$\scriptstyle{i-1}$};
\node at (-4,6) [above]{$\scriptstyle{j-1}$};
\node at (-7,-7) [left]{$\scriptstyle{i}$};
\node at (-7,7) [left]{$\scriptstyle{j}$};
\node at (7,-7) [right]{$\scriptstyle{i}$};
\node at (7,7) [right]{$\scriptstyle{j}$};
}\ }
=
\mathord{ \tikz[baseline=-.6ex, scale=.1]{
\draw (-8,5) -- (-11,5);
\draw (-8,-5) -- (-11,-5);
\draw (8,5) -- (11,5);
\draw (8,-5) -- (11,-5);
\draw (0,2) to[out=east, in=west] (8,6);
\draw (0,0) -- (6,0);
\draw[->-=.7, white, double=black, double distance=0.4pt, ultra thick] (6,0) -- (8,-3);
\draw[white, double=black, double distance=0.4pt, ultra thick] (-8,-6) -- (-5,-6) to[out=east, in=west] (0,6);
\draw[->-=.5, white, double=black, double distance=0.4pt, ultra thick] (-8,6) to[out=east, in=west] (0,2);
\draw[-<-=.9, white, double=black, double distance=0.4pt, ultra thick] (-6,0) -- (0,0);
\draw[->-=.2, white, double=black, double distance=0.4pt, ultra thick] (0,6) to[out=east, in=west] (5,-6) -- (8,-6);
\draw[->-=.6, white, double=black, double distance=0.4pt, ultra thick] (-8,-3) -- (-6,0);
\draw[-<-=.6] (8,3) -- (6,0);
\draw[->-=.6] (-8,3) -- (-6,0);
\draw[fill=white] (-9,-7) rectangle (-8,-2);
\draw[fill=white] (-9,7) rectangle (-8,2);
\draw[fill=white] (9,-7) rectangle (8,-2);
\draw[fill=white] (9,7) rectangle (8,2);
\draw[fill=white] (-5,-7) rectangle (-4,1);
\draw[fill=white] (5,-7) rectangle (4,1);
\node at (2,6) [above]{$\scriptstyle{i-1}$};
\node at (-5,6) [above]{$\scriptstyle{j-1}$};
\node at (-9,-7) [left]{$\scriptstyle{i}$};
\node at (-9,7) [left]{$\scriptstyle{j}$};
\node at (9,-7) [right]{$\scriptstyle{i}$};
\node at (9,7) [right]{$\scriptstyle{j}$};
\node at (0,0) [below]{$\scriptstyle{1}$};
\fill[opacity=.2, magenta] (-2,0) circle[radius=1];
\fill[opacity=.2, magenta] (2,0) circle[radius=1];
}\ }
=(-q^{\frac{1}{6}})^{2(i-1)}
\mathord{ \tikz[baseline=-.6ex, scale=.1]{
\draw (-8,5) -- (-11,5);
\draw (-8,-5) -- (-11,-5);
\draw (8,5) -- (11,5);
\draw (8,-5) -- (11,-5);
\draw (0,2) to[out=east, in=west] (8,6);
\draw[-<-=.5] (-4,-5) -- (4,-5);
\draw (-6,0) -- (-4,0);
\draw (6,0) -- (4,0);
\draw[->-=.5] (-8,-6) -- (-4,-6);
\draw[-<-=.5] (8,-6) -- (4,-6);
\draw[->-=.7, white, double=black, double distance=0.4pt, ultra thick] (6,0) -- (8,-3);
\draw[white, double=black, double distance=0.4pt, ultra thick] (-4,-1) to[out=east, in=west] (0,6);
\draw[->-=.5, white, double=black, double distance=0.4pt, ultra thick] (-8,6) to[out=east, in=west] (0,2);
\draw[->-=.3, white, double=black, double distance=0.4pt, ultra thick] (0,6) to[out=east, in=west] (4,-1);
\draw[->-=.6, white, double=black, double distance=0.4pt, ultra thick] (-8,-3) -- (-6,0);
\draw[-<-=.6] (8,3) -- (6,0);
\draw[->-=.6] (-8,3) -- (-6,0);
\draw[fill=white] (-9,-7) rectangle (-8,-2);
\draw[fill=white] (-9,7) rectangle (-8,2);
\draw[fill=white] (9,-7) rectangle (8,-2);
\draw[fill=white] (9,7) rectangle (8,2);
\draw[fill=white] (-5,-7) rectangle (-4,1);
\draw[fill=white] (5,-7) rectangle (4,1);
\node at (2,6) [above]{$\scriptstyle{i-1}$};
\node at (-5,6) [above]{$\scriptstyle{j-1}$};
\node at (-9,-7) [left]{$\scriptstyle{i}$};
\node at (-9,7) [left]{$\scriptstyle{j}$};
\node at (9,-7) [right]{$\scriptstyle{i}$};
\node at (9,7) [right]{$\scriptstyle{j}$};
\node at (-6,-6) [below]{$\scriptstyle{i-1}$};
\node at (6,-6) [below]{$\scriptstyle{i-1}$};
\node at (0,-5) [below]{$\scriptstyle{1}$};
}\ }.\]
\end{proof}

\begin{proof}[Proof of Propositoin~\ref{paralleleq}]
Apply Lemma~\ref{lem3} to $\sigma_{d}(k,l)$.
We have to confirm the web in the second term is equal to $\sigma_{d}(k,l+1)$. 
It is deformed as follows.
\begin{align}
\mathord{ \tikz[baseline=-.6ex, scale=.1, yshift=2cm]{
\draw (-19,4) -- (-17,4);
\draw (-19,-8) -- (-17,-8);
\draw (19,4) -- (17,4);
\draw (19,-8) -- (17,-8);
\draw[-<-=.5] (-9,5) -- (-16,5);
\draw[->-=.5] (9,5) -- (16,5);
\draw[->-=.5] (-9,-2) -- (-13,-2);
\draw[->-=.5] (-16,2) to[out=east, in=north west] (-13,-2);
\draw[->-=.5] (-16,-6) to[out=east, in=south west] (-13,-2);
\draw[fill=white] (-12,-4) -- (-12,0) -- (-15,-2) -- cycle;
\draw[->-=.5] (9,-2) -- (13,-2);
\draw[->-=.5] (16,2) to[out=west, in=north east] (13,-2);
\draw[->-=.5] (16,-6) to[out=west, in=south east] (13,-2);
\draw[fill=white] (12,-4) -- (12,0) -- (15,-2) -- cycle;
\draw[->-=.5] (-16,-9) -- (-8,-9);
\draw[-<-=.5] (16,-9) -- (8,-9);
\draw (0,2) to[out=east, in=west] (8,6);
\draw[-<-=.5] (-4,-5) -- (4,-5);
\draw[->-=.5] (-8,-6) -- (-4,-6);
\draw[-<-=.5] (8,-6) -- (4,-6);
\draw[white, double=black, double distance=0.4pt, ultra thick] (-4,-1) to[out=east, in=west] (0,6);
\draw[->-=.5, white, double=black, double distance=0.4pt, ultra thick] (-8,6) to[out=east, in=west] (0,2);
\draw[->-=.3, white, double=black, double distance=0.4pt, ultra thick] (0,6) to[out=east, in=west] (4,-1);
\draw (-7,0) -- (-4,0);
\draw (7,0) -- (4,0);
\draw[-<-=.7] (8,-3) -- (7,-3) -- (7,0);
\draw[->-=.7] (-8,-3) -- (-7,-3) -- (-7,0);
\draw[-<-=.7] (8,3) -- (7,3) -- (7,0);
\draw[->-=.7] (-8,3) -- (-7,3) -- (-7,0);
\draw[->-=.5] (-8,-8) -- (8,-8);
\draw[-<-=.5] (-8,-10) -- (8,-10);
\draw[fill=white] (-9,-11) rectangle (-8,0);
\draw[fill=white] (-9,7) rectangle (-8,2);
\draw[fill=white] (9,-11) rectangle (8,0);
\draw[fill=white] (9,7) rectangle (8,2);
\draw[fill=white] (-5,-7) rectangle (-4,1);
\draw[fill=white] (5,-7) rectangle (4,1);
\node at (-2,-6) [above]{$\scriptstyle{1}$};
\node at (2,-9) [above]{$\scriptstyle{k}$};
\node at (-2,-9) [below]{$\scriptstyle{l}$};
\draw[fill=white] (-17,-11) rectangle (-16,-4);
\draw[fill=white] (-17,7) rectangle (-16,0);
\draw[fill=white] (17,-11) rectangle (16,-4);
\draw[fill=white] (17,7) rectangle (16,0);
}\ }
&=
\mathord{ \tikz[baseline=-.6ex, scale=.1, yshift=2cm]{
\draw (-19,4) -- (-17,4);
\draw (-19,-8) -- (-17,-8);
\draw (19,4) -- (17,4);
\draw (19,-8) -- (17,-8);
\draw[->-=.5] (9,5) -- (16,5);
\draw[->-=.7] (-9,-4) -- (-9,-2) -- (-13,-2);
\draw[->-=.5] (-16,2) to[out=east, in=north west] (-13,-2);
\draw[->-=.5] (-16,-6) to[out=east, in=south west] (-13,-2);
\draw[fill=white] (-12,-4) -- (-12,0) -- (-15,-2) -- cycle;
\draw[->-=.5] (9,-2) -- (13,-2);
\draw[->-=.5] (16,2) to[out=west, in=north east] (13,-2);
\draw[->-=.5] (16,-6) to[out=west, in=south east] (13,-2);
\draw[fill=white] (12,-4) -- (12,0) -- (15,-2) -- cycle;
\draw[->-=.5] (-16,-9) to[out=east, in=west] (-10,-7);
\draw[-<-=.5] (16,-9) -- (8,-9);
\draw (0,2) to[out=east, in=west] (8,6);
\draw[-<-=.5] (-4,-5) -- (4,-5);
\draw[->-=.5] (-8,-6) -- (-4,-6);
\draw[-<-=.5] (8,-6) -- (4,-6);
\draw[white, double=black, double distance=0.4pt, ultra thick] (-4,-1) to[out=east, in=west] (0,6);
\draw[->-=.5, white, double=black, double distance=0.4pt, ultra thick] (-16,6) -- (-8,6) to[out=east, in=west] (0,2);
\draw[->-=.3, white, double=black, double distance=0.4pt, ultra thick] (0,6) to[out=east, in=west] (4,-1);
\draw (-7,0) -- (-4,0);
\draw (7,0) -- (4,0);
\draw[-<-=.7] (8,-3) -- (7,-3) -- (7,0);
\draw[->-=.7] (-8,-5) -- (-7,-5) -- (-7,0);
\draw[-<-=.7] (8,3) -- (7,3) -- (7,0);
\draw[->-=.5] (-16,4) -- (-7,4) -- (-7,0);
\draw[->-=.5] (-8,-8) -- (8,-8);
\draw[-<-=.3] (-9,-9) -- (-9,-10) -- (8,-10);
\draw[fill=white] (9,-11) rectangle (8,0);
\draw[fill=white] (9,7) rectangle (8,2);
\draw[fill=white] (-5,-7) rectangle (-4,1);
\draw[fill=white] (5,-7) rectangle (4,1);
\node at (-2,-6) [above]{$\scriptstyle{1}$};
\node at (2,-9) [above]{$\scriptstyle{k}$};
\node at (-2,-9) [below]{$\scriptstyle{l}$};
\draw[fill=white] (-10,-4) rectangle (-8,-9);
\draw (-10,-4) -- (-8,-9);
\draw[fill=white] (-17,-11) rectangle (-16,-4);
\draw[fill=white] (-17,7) rectangle (-16,0);
\draw[fill=white] (17,-11) rectangle (16,-4);
\draw[fill=white] (17,7) rectangle (16,0);
}\ }
=
\mathord{ \tikz[baseline=-.6ex, scale=.1, yshift=2cm]{
\draw (-19,4) -- (-17,4);
\draw (-19,-8) -- (-17,-8);
\draw (19,4) -- (17,4);
\draw (19,-8) -- (17,-8);
\draw[->-=.5] (-16,4) -- (-10,4);
\draw[-<-=.5] (-10,4) to[out=east, in=west] (-4,0);
\draw[->-=.5] (9,5) -- (16,5);
\draw (-8,-5) -- (-8,-2) -- (-13,-2);
\draw[->-=.5] (-16,2) to[out=east, in=north west] (-13,-2);
\draw[->-=.5] (-16,-6) to[out=east, in=south west] (-13,-2);
\draw[fill=white] (-12,-4) -- (-12,0) -- (-15,-2) -- cycle;
\draw[->-=.5] (9,-2) -- (13,-2);
\draw[->-=.5] (16,2) to[out=west, in=north east] (13,-2);
\draw[->-=.5] (16,-6) to[out=west, in=south east] (13,-2);
\draw[fill=white] (12,-4) -- (12,0) -- (15,-2) -- cycle;
\draw[->-=.2, ->-=.9] (-16,-8) -- (-10,-5) -- (-10,4);
\draw[->-=.5] (-16,-10) to[out=east, in=west] (-9,-8);
\draw[-<-=.5] (16,-9) -- (8,-9);
\draw (0,2) to[out=east, in=west] (8,6);
\draw[-<-=.5] (-4,-5) -- (4,-5);
\draw[->-=.5] (-7,-6) -- (-4,-6);
\draw[-<-=.5] (8,-6) -- (4,-6);
\draw[white, double=black, double distance=0.4pt, ultra thick] (-4,-1) to[out=east, in=west] (0,6);
\draw[->-=.5, white, double=black, double distance=0.4pt, ultra thick] (-16,6) -- (-8,6) to[out=east, in=west] (0,2);
\draw[->-=.3, white, double=black, double distance=0.4pt, ultra thick] (0,6) to[out=east, in=west] (4,-1);
\draw (7,0) -- (4,0);
\draw[-<-=.7] (8,-3) -- (7,-3) -- (7,0);
\draw[-<-=.7] (8,3) -- (7,3) -- (7,0);
\draw[->-=.5] (-8,-8) -- (8,-8);
\draw[-<-=.3] (-8,-9) -- (-8,-10) -- (8,-10);
\draw[fill=white] (9,-11) rectangle (8,0);
\draw[fill=white] (9,7) rectangle (8,2);
\draw[fill=white] (-5,-7) rectangle (-4,1);
\draw[fill=white] (5,-7) rectangle (4,1);
\node at (-2,-6) [above]{$\scriptstyle{1}$};
\node at (2,-9) [above]{$\scriptstyle{k}$};
\node at (-2,-9) [below]{$\scriptstyle{l}$};
\draw[fill=white] (-11,-4) rectangle (-9,0);
\draw[fill=white] (-11,-4) -- (-9,0);
\draw[fill=white] (-9,-5) rectangle (-7,-9);
\draw (-9,-5) -- (-7,-9);
\draw[fill=white] (-17,-11) rectangle (-16,-4);
\draw[fill=white] (-17,7) rectangle (-16,0);
\draw[fill=white] (17,-11) rectangle (16,-4);
\draw[fill=white] (17,7) rectangle (16,0);
}\ }\label{uniontri}\\ 
&=\mathord{ \tikz[baseline=-.6ex, scale=.1, yshift=2cm]{
\draw (-19,4) -- (-17,4);
\draw (-19,-8) -- (-17,-8);
\draw (19,4) -- (17,4);
\draw (19,-8) -- (17,-8);
\draw[-<-=.5] (-12,0) -- (-4,0);
\draw[->-=.5] (9,5) -- (16,5);
\draw[->-=.3] (-8,-5) -- (-8,-2) -- (-13,-2);
\draw[->-=.5] (-16,2) to[out=east, in=north west] (-13,-2);
\draw[->-=.5] (-16,-6) to[out=east, in=south west] (-13,-2);
\draw[fill=white] (-12,-5) -- (-12,1) -- (-16,-2) -- cycle;
\draw[->-=.5] (9,-2) -- (13,-2);
\draw[->-=.5] (16,2) to[out=west, in=north east] (13,-2);
\draw[->-=.5] (16,-6) to[out=west, in=south east] (13,-2);
\draw[fill=white] (12,-4) -- (12,0) -- (15,-2) -- cycle;
\draw[->-=.5] (-16,-9) to[out=east, in=west] (-9,-7);
\draw[-<-=.5] (16,-9) -- (8,-9);
\draw (0,2) to[out=east, in=west] (8,6);
\draw[-<-=.5] (-4,-5) -- (4,-5);
\draw[->-=.5] (-7,-6) -- (-4,-6);
\draw[-<-=.5] (8,-6) -- (4,-6);
\draw[white, double=black, double distance=0.4pt, ultra thick] (-4,-1) to[out=east, in=west] (0,6);
\draw[->-=.5, white, double=black, double distance=0.4pt, ultra thick] (-16,6) -- (-8,6) to[out=east, in=west] (0,2);
\draw[->-=.3, white, double=black, double distance=0.4pt, ultra thick] (0,6) to[out=east, in=west] (4,-1);
\draw (7,0) -- (4,0);
\draw[-<-=.7] (8,-3) -- (7,-3) -- (7,0);
\draw[-<-=.7] (8,3) -- (7,3) -- (7,0);
\draw[->-=.5] (-8,-8) -- (8,-8);
\draw[-<-=.3] (-8,-9) -- (-8,-10) -- (8,-10);
\draw[fill=white] (9,-11) rectangle (8,0);
\draw[fill=white] (9,7) rectangle (8,2);
\draw[fill=white] (-5,-7) rectangle (-4,1);
\draw[fill=white] (5,-7) rectangle (4,1);
\node at (-2,-6) [above]{$\scriptstyle{1}$};
\node at (2,-9) [above]{$\scriptstyle{k}$};
\node at (-2,-9) [below]{$\scriptstyle{l}$};
\draw[fill=white] (-9,-5) rectangle (-7,-9);
\draw (-9,-5) -- (-7,-9);
\draw[fill=white] (-17,-11) rectangle (-16,-4);
\draw[fill=white] (-17,7) rectangle (-16,0);
\draw[fill=white] (17,-11) rectangle (16,-4);
\draw[fill=white] (17,7) rectangle (16,0);
}\ }
=\mathord{ \tikz[baseline=-.6ex, scale=.1, yshift=2cm]{
\draw (-19,4) -- (-17,4);
\draw (-19,-8) -- (-17,-8);
\draw (19,4) -- (17,4);
\draw (19,-8) -- (17,-8);
\draw[-<-=.5] (-10,0) -- (-4,0);
\draw[->-=.5] (9,5) -- (16,5);
\draw (-12,-2) -- (-11,-2);
\draw[->-=.5] (-8,-5) -- (-8,-3) -- (-10,-3);
\draw[->-=.5] (-16,2) to[out=east, in=north west] (-13,-2);
\draw[->-=.5] (-16,-6) to[out=east, in=south west] (-13,-2);
\draw[fill=white] (-12,-5) -- (-12,1) -- (-16,-2) -- cycle;
\draw[->-=.5] (9,-2) -- (13,-2);
\draw[->-=.5] (16,2) to[out=west, in=north east] (13,-2);
\draw[->-=.5] (16,-6) to[out=west, in=south east] (13,-2);
\draw[fill=white] (12,-4) -- (12,0) -- (15,-2) -- cycle;
\draw[->-=.5] (-16,-9) to[out=east, in=west] (-9,-7);
\draw[-<-=.5] (16,-9) -- (8,-9);
\draw (0,2) to[out=east, in=west] (8,6);
\draw[-<-=.5] (-4,-5) -- (4,-5);
\draw[->-=.5] (-7,-6) -- (-4,-6);
\draw[-<-=.5] (8,-6) -- (4,-6);
\draw[white, double=black, double distance=0.4pt, ultra thick] (-4,-1) to[out=east, in=west] (0,6);
\draw[->-=.5, white, double=black, double distance=0.4pt, ultra thick] (-16,6) -- (-8,6) to[out=east, in=west] (0,2);
\draw[->-=.3, white, double=black, double distance=0.4pt, ultra thick] (0,6) to[out=east, in=west] (4,-1);
\draw (7,0) -- (4,0);
\draw[-<-=.7] (8,-3) -- (7,-3) -- (7,0);
\draw[-<-=.7] (8,3) -- (7,3) -- (7,0);
\draw[->-=.5] (-8,-8) -- (8,-8);
\draw[-<-=.3] (-8,-9) -- (-8,-10) -- (8,-10);
\draw[fill=white] (9,-11) rectangle (8,0);
\draw[fill=white] (9,7) rectangle (8,2);
\draw[fill=white] (-5,-7) rectangle (-4,1);
\draw[fill=white] (5,-7) rectangle (4,1);
\node at (-2,-6) [above]{$\scriptstyle{1}$};
\node at (2,-9) [above]{$\scriptstyle{k}$};
\node at (-2,-9) [below]{$\scriptstyle{l}$};
\draw[fill=white] (-11,-9) rectangle (-10,1);
\draw[fill=white] (-9,-5) rectangle (-7,-9);
\draw (-9,-5) -- (-7,-9);
\draw[fill=white] (-17,-11) rectangle (-16,-4);
\draw[fill=white] (-17,7) rectangle (-16,0);
\draw[fill=white] (17,-11) rectangle (16,-4);
\draw[fill=white] (17,7) rectangle (16,0);
}\ }\notag\\
&=\mathord{ \tikz[baseline=-.6ex, scale=.1, yshift=2cm]{
\draw (-19,4) -- (-17,4);
\draw (-19,-8) -- (-17,-8);
\draw (19,4) -- (17,4);
\draw (19,-8) -- (17,-8);
\draw[-<-=.5] (-10,0) -- (-4,0);
\draw[->-=.5] (9,5) -- (16,5);
\draw (-11,-2) -- (-12,-2);
\draw[->-=.5] (-16,2) to[out=east, in=north west] (-13,-2);
\draw[->-=.5] (-16,-6) to[out=east, in=south west] (-13,-2);
\draw[fill=white] (-12,-5) -- (-12,1) -- (-16,-2) -- cycle;
\draw[->-=.5] (9,-2) -- (13,-2);
\draw[->-=.5] (16,2) to[out=west, in=north east] (13,-2);
\draw[->-=.5] (16,-6) to[out=west, in=south east] (13,-2);
\draw[fill=white] (12,-4) -- (12,0) -- (15,-2) -- cycle;
\draw[->-=.5] (-16,-9) to[out=east, in=west] (-10,-7);
\draw[-<-=.5] (16,-9) -- (8,-9);
\draw (0,2) to[out=east, in=west] (8,6);
\draw[-<-=.5] (-4,-5) -- (4,-5);
\draw[->-=.5] (-10,-6) -- (-4,-6);
\draw[-<-=.5] (8,-6) -- (4,-6);
\draw[white, double=black, double distance=0.4pt, ultra thick] (-4,-1) to[out=east, in=west] (0,6);
\draw[->-=.5, white, double=black, double distance=0.4pt, ultra thick] (-16,6) -- (-8,6) to[out=east, in=west] (0,2);
\draw[->-=.3, white, double=black, double distance=0.4pt, ultra thick] (0,6) to[out=east, in=west] (4,-1);
\draw (7,0) -- (4,0);
\draw[-<-=.7] (8,-3) -- (7,-3) -- (7,0);
\draw[-<-=.7] (8,3) -- (7,3) -- (7,0);
\draw[->-=.5] (-10,-8) -- (8,-8);
\draw[-<-=.3] (-10,-10) -- (8,-10);
\draw[fill=white] (9,-11) rectangle (8,0);
\draw[fill=white] (9,7) rectangle (8,2);
\draw[fill=white] (-5,-7) rectangle (-4,1);
\draw[fill=white] (5,-7) rectangle (4,1);
\node at (-2,-6) [above]{$\scriptstyle{1}$};
\node at (2,-9) [above]{$\scriptstyle{k}$};
\node at (-2,-9) [below]{$\scriptstyle{l}$};
\draw[fill=white] (-11,-11) rectangle (-10,1);
\draw[fill=white] (-17,-11) rectangle (-16,-4);
\draw[fill=white] (-17,7) rectangle (-16,0);
\draw[fill=white] (17,-11) rectangle (16,-4);
\draw[fill=white] (17,7) rectangle (16,0);
}\ }
=\mathord{ \tikz[baseline=-.6ex, scale=.1, yshift=2cm]{
\draw (-19,4) -- (-17,4);
\draw (-19,-8) -- (-17,-8);
\draw (19,4) -- (17,4);
\draw (19,-8) -- (17,-8);
\draw[->-=.5] (9,5) -- (16,5);
\draw (-11,-2) -- (-12,-2);
\draw[->-=.5] (-16,2) to[out=east, in=north west] (-13,-2);
\draw[->-=.5] (-16,-6) to[out=east, in=south west] (-13,-2);
\draw[fill=white] (-12,-5) -- (-12,1) -- (-16,-2) -- cycle;
\draw[->-=.5] (9,-2) -- (13,-2);
\draw[->-=.5] (16,2) to[out=west, in=north east] (13,-2);
\draw[->-=.5] (16,-6) to[out=west, in=south east] (13,-2);
\draw[fill=white] (12,-4) -- (12,0) -- (15,-2) -- cycle;
\draw[->-=.5] (-16,-9) to[out=east, in=west] (-10,-7);
\draw[-<-=.5] (16,-9) -- (8,-9);
\draw (0,2) to[out=east, in=west] (8,6);
\draw[-<-=.5] (-10,-5) -- (4,-5);
\draw[-<-=.5] (8,-6) -- (4,-6);
\draw[white, double=black, double distance=0.4pt, ultra thick] (-10,-1) -- (-4,-1) to[out=east, in=west] (0,6);
\draw[->-=.5, white, double=black, double distance=0.4pt, ultra thick] (-16,6) -- (-8,6) to[out=east, in=west] (0,2);
\draw[->-=.3, white, double=black, double distance=0.4pt, ultra thick] (0,6) to[out=east, in=west] (4,-1);
\draw (7,0) -- (4,0);
\draw[-<-=.7] (8,-3) -- (7,-3) -- (7,0);
\draw[-<-=.7] (8,3) -- (7,3) -- (7,0);
\draw[->-=.5] (-10,-8) -- (8,-8);
\draw[-<-=.3] (-10,-10) -- (8,-10);
\draw[fill=white] (9,-11) rectangle (8,0);
\draw[fill=white] (9,7) rectangle (8,2);
\draw[fill=white] (5,-7) rectangle (4,1);
\node at (-2,-6) [above]{$\scriptstyle{1}$};
\node at (2,-9) [above]{$\scriptstyle{k}$};
\node at (-2,-9) [below]{$\scriptstyle{l}$};
\draw[fill=white] (-11,-11) rectangle (-10,1);
\draw[fill=white] (-17,-11) rectangle (-16,-4);
\draw[fill=white] (-17,7) rectangle (-16,0);
\draw[fill=white] (17,-11) rectangle (16,-4);
\draw[fill=white] (17,7) rectangle (16,0);
}\ }.\notag
\end{align}
In the first equation, 
we remove an $A_{2}$ clasp by using its definition and property of the triangle in Lemma~\ref{claspformula}.
One can also apply the same deformation on the right side of the web and obtain the following equation:
\[
\mathord{ \tikz[baseline=-.6ex, scale=.1, yshift=2cm]{
\draw (-19,4) -- (-17,4);
\draw (-19,-8) -- (-17,-8);
\draw (19,4) -- (17,4);
\draw (19,-8) -- (17,-8);
\draw[-<-=.5] (-9,5) -- (-16,5);
\draw[->-=.5] (9,5) -- (16,5);
\draw[->-=.5] (-9,-2) -- (-13,-2);
\draw[->-=.5] (-16,2) to[out=east, in=north west] (-13,-2);
\draw[->-=.5] (-16,-6) to[out=east, in=south west] (-13,-2);
\draw[fill=white] (-12,-4) -- (-12,0) -- (-15,-2) -- cycle;
\draw[->-=.5] (9,-2) -- (13,-2);
\draw[->-=.5] (16,2) to[out=west, in=north east] (13,-2);
\draw[->-=.5] (16,-6) to[out=west, in=south east] (13,-2);
\draw[fill=white] (12,-4) -- (12,0) -- (15,-2) -- cycle;
\draw[->-=.5] (-16,-9) -- (-8,-9);
\draw[-<-=.5] (16,-9) -- (8,-9);
\draw (0,2) to[out=east, in=west] (8,6);
\draw[-<-=.5] (-4,-5) -- (4,-5);
\draw[->-=.5] (-8,-6) -- (-4,-6);
\draw[-<-=.5] (8,-6) -- (4,-6);
\draw[white, double=black, double distance=0.4pt, ultra thick] (-4,-1) to[out=east, in=west] (0,6);
\draw[->-=.5, white, double=black, double distance=0.4pt, ultra thick] (-8,6) to[out=east, in=west] (0,2);
\draw[->-=.3, white, double=black, double distance=0.4pt, ultra thick] (0,6) to[out=east, in=west] (4,-1);
\draw (-7,0) -- (-4,0);
\draw (7,0) -- (4,0);
\draw[-<-=.7] (8,-3) -- (7,-3) -- (7,0);
\draw[->-=.7] (-8,-3) -- (-7,-3) -- (-7,0);
\draw[-<-=.7] (8,3) -- (7,3) -- (7,0);
\draw[->-=.7] (-8,3) -- (-7,3) -- (-7,0);
\draw[->-=.5] (-8,-8) -- (8,-8);
\draw[-<-=.5] (-8,-10) -- (8,-10);
\draw[fill=white] (-9,-11) rectangle (-8,0);
\draw[fill=white] (-9,7) rectangle (-8,2);
\draw[fill=white] (9,-11) rectangle (8,0);
\draw[fill=white] (9,7) rectangle (8,2);
\draw[fill=white] (-5,-7) rectangle (-4,1);
\draw[fill=white] (5,-7) rectangle (4,1);
\node at (-2,-6) [above]{$\scriptstyle{1}$};
\node at (2,-9) [above]{$\scriptstyle{k}$};
\node at (-2,-9) [below]{$\scriptstyle{l}$};
\draw[fill=white] (-17,-11) rectangle (-16,-4);
\draw[fill=white] (-17,7) rectangle (-16,0);
\draw[fill=white] (17,-11) rectangle (16,-4);
\draw[fill=white] (17,7) rectangle (16,0);
}\ }
=\mathord{ \tikz[baseline=-.6ex, scale=.1]{
\draw (-17,5) -- (-14,5);
\draw (-17,-5) -- (-14,-5);
\draw (17,5) -- (14,5);
\draw (17,-5) -- (14,-5);
\draw[->-=.5] (-14,6) -- (-6,6);
\draw[->-=.5] (-14,-6) -- (-6,-6);
\draw[->-=.5] (-6,0) -- (-11,0);
\draw[->-=.5] (-14,4) to[out=east, in=north east] (-11,0);
\draw[->-=.5] (-14,-4) to[out=east, in=south east] (-11,0);
\draw[fill=white] (-12,0) -- (-9,2) -- (-9,-2) -- cycle;
\draw[-<-=.5] (14,6) -- (6,6);
\draw[-<-=.5] (14,-6) -- (6,-6);
\draw[-<-=.5] (6,0) -- (11,0);
\draw[-<-=.5] (14,4) to[out=west, in=north west] (11,0);
\draw[-<-=.5] (14,-4) to[out=west, in=south west] (11,0);
\draw[fill=white] (12,0) -- (9,2) -- (9,-2) -- cycle;
\draw[-<-=.5] (-6,-6) -- (6,-6);
\draw[->-=.5] (-6,-4) -- (6,-4);
\draw[->-=.9, white, double=black, double distance=0.4pt, ultra thick] (-6,-1) to[out=east, in=west] (0,6);
\draw[->-=.9, white, double=black, double distance=0.4pt, ultra thick] (-6,6) to[out=east, in=west] (0,-1);
\draw[white, double=black, double distance=0.4pt, ultra thick] (0,-1) to[out=east, in=west] (6,6);
\draw[white, double=black, double distance=0.4pt, ultra thick] (0,6) to[out=east, in=west] (6,-1);
\draw[fill=white] (-7,8) rectangle (-6,4);
\draw[fill=white] (-7,-8) rectangle (-6,2);
\draw[fill=white] (7,8) rectangle (6,4);
\draw[fill=white] (7,-8) rectangle (6,2);
\node at (3,-5)[above]{$\scriptstyle{k}$};
\node at (3,-5)[below]{$\scriptstyle{l+1}$};
\draw[fill=white] (-14,-8) rectangle (-15,-2);
\draw[fill=white] (-14,8) rectangle (-15,2);
\draw[fill=white] (14,-8) rectangle (15,-2);
\draw[fill=white] (14,8) rectangle (15,2);
}\ }
=\sigma_{d}(k,l+1).
\]
\end{proof}

\begin{THM}[parallel $m$-full twist formula]\label{parallelmfull}
Let $\underline{k}=(k_{1},\dots,k_{m})$ be an $m$-tuple of integers, $k_{0}=d=\min\{s,t\}$, and $\delta=\left| s-t \right|$.
\begin{align*}
\mathord{\ \tikz[baseline=-.6ex]{
\begin{scope}[xshift=-1cm]
\draw 
(-.5,.4) -- +(-.2,0)
(-.5,-.4) -- +(-.2,0);
\draw[->-=1, white, double=black, double distance=0.4pt, ultra thick] 
(-.5,-.4) to[out=east, in=west] (.0,.4);
\draw[->-=1, white, double=black, double distance=0.4pt, ultra thick] 
(-.5,.4) to[out=east, in=west] (.0,-.4);
\draw[white, double=black, double distance=0.4pt, ultra thick] 
(0,-.4) to[out=east, in=west] (.5,.4);
\draw[white, double=black, double distance=0.4pt, ultra thick] 
(0,.4) to[out=east, in=west] (.5,-.4);
\draw[fill=white] (-.5,-.6) rectangle +(-.1,.4);
\draw[fill=white] (-.5,.6) rectangle +(-.1,-.4);
\node at (-.5,-.6)[left]{$\scriptstyle{s}$};
\node at (-.5,.6)[left]{$\scriptstyle{t}$};
\end{scope}
\node at (.0,.0){$\cdots$};
\node at (.0,-.4)[below]{$\scriptstyle{m\text{ full twists}}$};
\begin{scope}[xshift=1cm]
\draw
(.5,-.4) -- +(.2,0)
(.5,.4) -- +(.2,0);
\draw[->-=1, white, double=black, double distance=0.4pt, ultra thick] 
(-.5,-.4) to[out=east, in=west] (.0,.4);
\draw[->-=1, white, double=black, double distance=0.4pt, ultra thick] 
(-.5,.4) to[out=east, in=west] (.0,-.4);
\draw[white, double=black, double distance=0.4pt, ultra thick] 
(0,-.4) to[out=east, in=west] (.5,.4);
\draw[white, double=black, double distance=0.4pt, ultra thick] 
(0,.4) to[out=east, in=west] (.5,-.4);
\draw[fill=white] (.5,-.6) rectangle +(.1,.4);
\draw[fill=white] (.5,.6) rectangle +(.1,-.4);
\node at (.5,-.6)[right]{$\scriptstyle{s}$};
\node at (.5,.6)[right]{$\scriptstyle{t}$};
\end{scope}
}\ }
&=q^{-\frac{m}{3}k_{0}(k_{0}+\delta)-mk_{0}}
\sum_{k_{0}\geq k_{1}\geq\cdots\geq k_{m}\geq 0}D(\underline{k})
\mathord{\ \tikz[baseline=-.6ex, scale=.1]{
\draw (-6,5) -- (-9,5);
\draw (-6,-5) -- (-9,-5);
\draw (6,5) -- (9,5);
\draw (6,-5) -- (9,-5);
\draw[->-=.5] (-6,6) -- (6,6);
\draw[->-=.5] (-6,-6) -- (6,-6);
\draw[->-=.5] (-6,4) to[out=east, in=north west] (0,0);
\draw[->-=.5] (-6,-4) to[out=east, in=south west] (0,0);
\draw[->-=.7] (0,0) to[out=north east, in=west] (6,4);
\draw[->-=.7] (0,0) to[out=south east, in=west] (6,-4);
\draw[fill=white] (-3,0) -- (0,3) -- (3,0) -- (0,-3) -- cycle;
\draw (0,3) -- (0,-3);
\draw[fill=white] (-7,-7) rectangle (-6,-3);
\draw[fill=white] (-7,7) rectangle (-6,3);
\draw[fill=white] (7,-7) rectangle (6,-3);
\draw[fill=white] (7,7) rectangle (6,3);
\node at (-7,-7)[left]{$\scriptstyle{s}$};
\node at (-7,7)[left]{$\scriptstyle{t}$};
\node at (7,-7)[right]{$\scriptstyle{s}$};
\node at (7,7)[right]{$\scriptstyle{t}$};
\node at (0,6)[above]{$\scriptstyle{t-d+k_{m}}$};
\node at (0,-6)[below]{$\scriptstyle{s-d+k_{m}}$};
\node at (-3,4)[below left]{$\scriptstyle{d-k_{m}}$};
\node at (-3,-4)[above left]{$\scriptstyle{d-k_{m}}$};
\node at (3,4)[below right]{$\scriptstyle{d-k_{m}}$};
\node at (3,-4)[above right]{$\scriptstyle{d-k_{m}}$};
}\ },
\end{align*}
where 
\[
D(\underline{k})=q^{\sum_{i=1}^{m}k_{i}(k_{i}+\delta)+k_{i}}q^{\frac{1}{2}(k_{0}-k_{m})}\frac{(q)_{k_{0}+\delta}}{(q)_{k_{m}+\delta}}{d \choose k_{1}',k_{2}',\dots,k_{m}',k_{m}}_{q}
\]
and $k_{i+1}'=k_i-k_{i+1}$ for $i=0,1,\dots,m-1$.
\end{THM}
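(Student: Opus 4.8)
The plan is to argue by induction on $m$, using the single full twist formula (Theorem~\ref{parallelfull}) as the inductive engine, exactly as the anti-parallel $m$-full twist formula (Theorem~\ref{antimfull}) is deduced from Theorem~\ref{antifull}. The base case $m=1$ is Theorem~\ref{parallelfull} itself, after checking that $D(\underline{k})$ with $k_1'=k_0-k_1$ and $\binom{d}{k_1',k_1}_q=\binom{d}{k_1}_q$ collapses to the coefficient appearing there (the prefactor $-\frac{d}{2}$ arising as $-k_0+\frac{1}{2}k_0$ from the $q^{\frac{1}{2}(k_0-k_m)}$ term). For the inductive step I would view the $m$ full twists as a single full twist stacked on top of $m-1$ full twists, and first expand the outer twist by the second form of Theorem~\ref{parallelfull}. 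This replaces the outer twist by a finite sum over an index $k_1$ with $0\le k_1\le k_0=d$, with coefficient $q^{-\frac{1}{3}k_0(k_0+\delta)-\frac{k_0}{2}}\,q^{k_1(k_1+\delta)+\frac{k_1}{2}}\,\frac{(q)_{k_0+\delta}}{(q)_{k_1+\delta}}\binom{k_0}{k_1}_q$ multiplying the diamond basis web whose through-colors are $k_1+t-d$ and $k_1+s-d$ and whose four legs are colored $d-k_1$.

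The heart of the induction is a geometric reduction showing that the remaining $m-1$ full twists, which still act on the pair colored $(s,t)$, can be pushed through the diamond web produced by the outer twist. Concretely, I would establish that stacking an $(m-1)$-full twist on the diamond web for $k_1$ equals, up to an explicit monomial framing factor coming from sliding the twist across the $d-k_1$ turned-back strands (computed from the braiding-over-clasp relations in Lemma~\ref{claspformula} together with the vanishing relations of Lemma~\ref{vanish}), an $(m-1)$-full twist acting only on the reduced through-strands. Since $\min\{s-d+k_1,\,t-d+k_1\}=k_1$ and $\left|(s-d+k_1)-(t-d+k_1)\right|=\delta$, this reduced twist is an $(m-1)$-full twist on strands colored $s-d+k_1$ and $t-d+k_1$ with new minimum $k_1$ and the same $\delta$, to which the induction hypothesis applies directly.

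Applying the induction hypothesis to the reduced twist introduces the nested indices $k_1\ge k_2\ge\cdots\ge k_m\ge 0$ together with the prefactor $q^{-\frac{m-1}{3}k_1(k_1+\delta)-(m-1)k_1}$ and the coefficient $D(\underline{k})$ formed with $k_1$ in place of $k_0$. It then remains to reassemble the three pieces — the outer single-twist coefficient, the framing factor from the reduction, and the inner inductive coefficient — into the uniform expression of the statement. The $q$-power bookkeeping converts the reduced prefactor $q^{-\frac{m-1}{3}k_1(k_1+\delta)-(m-1)k_1}$ back to the uniform $q^{-\frac{m}{3}k_0(k_0+\delta)-mk_0}$, absorbing the differences into $q^{\sum_{i=1}^{m}k_i(k_i+\delta)+k_i}$ and $q^{\frac{1}{2}(k_0-k_m)}$; the Pochhammer ratios telescope as $\frac{(q)_{k_0+\delta}}{(q)_{k_1+\delta}}\cdot\frac{(q)_{k_1+\delta}}{(q)_{k_m+\delta}}=\frac{(q)_{k_0+\delta}}{(q)_{k_m+\delta}}$; and the product of $q$-binomials collapses by the standard nesting identity $\binom{k_0}{k_1}_q\binom{k_1}{k_2',\dots,k_m',k_m}_q=\binom{k_0}{k_1',k_2',\dots,k_m',k_m}_q$, with $k_{i+1}'=k_i-k_{i+1}$, to the single $q$-multinomial in $D(\underline{k})$.

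The main obstacle is the geometric reduction of the previous paragraph: showing rigorously that a full twist slides through the diamond web onto the reduced through-strands, and pinning down the exact framing monomial it contributes. This is the $m$-fold analogue of the skein calculations in Lemmas~\ref{lem1}--\ref{lem3} and Proposition~\ref{paralleleq}, and I expect it to require the same careful repeated use of the $A_2$ skein relation and the clasp properties; once it is in place, the remaining verification is the routine $q$-series bookkeeping sketched above, which could alternatively be organized as a direct application of the lattice-path generating function of Proposition~\ref{partition} in each layer. As a consistency check one should confirm the $m=1$ reduction reproduces Theorem~\ref{parallelfull} and compare the final shape of $D(\underline{k})$ against the anti-parallel coefficient $C(\underline{k})$, whose only differences are the halved self-twist contributions reflected in the $\frac{k_i}{2}$ and $\frac{1}{2}(k_0-k_m)$ exponents.
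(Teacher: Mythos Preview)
Your proposal is correct and follows essentially the same strategy as the paper: the paper also proceeds by iteratively peeling off one full twist at a time via Theorem~\ref{parallelfull}, sliding the resulting triangle/diamond structure through the remaining twists using the braiding-over-clasp relations of Lemma~\ref{claspformula} (this is exactly the ``geometric reduction'' you flag as the main obstacle, and the paper carries out the explicit framing-monomial computation you anticipate), and then telescoping the Pochhammer ratios and $q$-binomials into the multinomial $D(\underline{k})$. The only cosmetic difference is that the paper unrolls the iteration directly rather than packaging it as formal induction on $m$.
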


For a positive integer $m$, 
we denote a right-handed $m$ half twist by
\[
\mathord{\ \tikz[baseline=-.6ex, scale=.1]{
\draw (-5,2) -- (5,2);
\draw (-5,-2) -- (5,-2);
\draw[fill=white] (-3,-3) rectangle (3,3);
\node at (0,0){$\small{m}$};
}\ }
=
\mathord{\ \tikz[baseline=-.6ex, scale=.1]{
\begin{scope}[xshift=-5cm]
\draw (-5,2) -- +(-2,0);
\draw (-5,-2) -- +(-2,0);
\draw[white, double=black, double distance=0.4pt, ultra thick] 
(-5,-2) to[out=east, in=west] (0,2);
\draw[white, double=black, double distance=0.4pt, ultra thick] 
(-5,2) to[out=east, in=west] (0,-2);
\end{scope}
\node at (0,0){$\cdots$};
\node at (0,-2)[below]{$\scriptstyle{m\text{ crossings}}$};
\begin{scope}[xshift=5cm]
\draw
(5,-2) -- +(2,0)
(5,2) -- +(2,0);
\draw[white, double=black, double distance=0.4pt, ultra thick] 
(0,-2) to[out=east, in=west] (5,2);
\draw[white, double=black, double distance=0.4pt, ultra thick] 
(0,2) to[out=east, in=west] (5,-2);
\end{scope}
}\ }
\]

\begin{proof}
By sliding the right-hand triangle,
\begin{align*}
&\mathord{\ \tikz[baseline=-.6ex, scale=.1]{
\draw (-6,5) -- (-9,5);
\draw (-6,-5) -- (-9,-5);
\draw[->-=.5] (-6,6) -- (6,6);
\draw[->-=.5] (-6,-6) -- (6,-6);
\draw[->-=.5] (-6,4) to[out=east, in=north west] (0,0);
\draw[->-=.5] (-6,-4) to[out=east, in=south west] (0,0);
\draw[->-=.7] (0,0) to[out=north east, in=west] (6,4);
\draw[->-=.7] (0,0) to[out=south east, in=west] (6,-4);
\draw[fill=white] (-3,0) -- (0,3) -- (3,0) -- (0,-3) -- cycle;
\draw (0,3) -- (0,-3);
\begin{scope}[xshift=16cm]
\draw[white, double=black, double distance=0.4pt, ultra thick] (-6,-5) to[out=east, in=west] (0,5);
\draw[white, double=black, double distance=0.4pt, ultra thick] (-6,5) to[out=east, in=west] (0,-5);
\draw[white, double=black, double distance=0.4pt, ultra thick] (0,-5) to[out=east, in=west] (6,5);
\draw[white, double=black, double distance=0.4pt, ultra thick] (0,5) to[out=east, in=west] (6,-5);
\draw[->-=.7] (-9,5) -- (-6,5);
\draw[->-=.7] (-9,-5) -- (-6,-5);
\draw[->-=.5] (6,5) -- (9,5);
\draw[->-=.5] (6,-5) -- (9,-5);
\draw (10,5) -- (12,5);
\draw (10,-5) -- (12,-5);
\draw[fill=white] (9,-7) rectangle (10,-3);
\draw[fill=white] (9,7) rectangle (10,3);
\node at (10,-7)[right]{$\scriptstyle{s}$};
\node at (10,7)[right]{$\scriptstyle{t}$};
\end{scope}
\draw[fill=white] (-7,-7) rectangle (-6,-3);
\draw[fill=white] (-7,7) rectangle (-6,3);
\draw[fill=white] (7,-7) rectangle (6,-3);
\draw[fill=white] (7,7) rectangle (6,3);
\node at (-7,-7)[left]{$\scriptstyle{s}$};
\node at (-7,7)[left]{$\scriptstyle{t}$};
\node at (7,-7)[right]{$\scriptstyle{s}$};
\node at (7,7)[right]{$\scriptstyle{t}$};
\node at (0,6)[above]{$\scriptstyle{k+(t-d)}$};
\node at (0,-6)[below]{$\scriptstyle{k+(s-d)}$};
\node at (-3,4)[below left]{$\scriptstyle{d-k}$};
\node at (-3,-4)[above left]{$\scriptstyle{d-k}$};
\node at (3,4)[below right]{$\scriptstyle{d-k}$};
\node at (3,-4)[above right]{$\scriptstyle{d-k}$};
}\ }\\
&=\left((-1)^{d-k}q^{-\frac{1}{6}(d-k)^{2}-\frac{1}{2}(d-k)}q^{-\frac{1}{3}(d-k)(k+s-d)}q^{-\frac{1}{3}(d-k)(k+t-d)}\right)^{2}\\
&\quad\times\mathord{\ \tikz[baseline=-.6ex, scale=.1]{
\begin{scope}[xshift=0cm]
\draw[white, double=black, double distance=0.4pt, ultra thick] (-9,-6) to[out=east, in=south] (-2,0) to[out=north, in=west](0,6);
\draw[-<-=.2, white, double=black, double distance=0.4pt, ultra thick] (-9,0) -- (0,0);
\draw[white, double=black, double distance=0.4pt, ultra thick] (-9,6) to[out=east, in=west] (0,-6);
\draw[white, double=black, double distance=0.4pt, ultra thick] (0,-6) to[out=east, in=west] (9,6);
\draw[-<-=.8, white, double=black, double distance=0.4pt, ultra thick] (0,0) -- (9,0);
\draw[white, double=black, double distance=0.4pt, ultra thick] (0,6) to[out=east, in=north] (2,0) to[out=south, in=west] (9,-6);
\end{scope}
\begin{scope}[xshift=-9cm]
\draw (-6,5) -- (-9,5);
\draw (-6,-5) -- (-9,-5);
\draw[->-=.5] (-6,6) -- (0,6);
\draw[->-=.5] (-6,-6) -- (0,-6);
\draw[->-=.5] (-6,4) to[out=east, in=north west] (0,0);
\draw[->-=.5] (-6,-4) to[out=east, in=south west] (0,0);
\draw[fill=white] (-3,0) -- (0,3) -- (0,-3) -- cycle;
\draw[fill=white] (-7,-7) rectangle (-6,-3);
\draw[fill=white] (-7,7) rectangle (-6,3);
\node at (-3,4)[below left]{$\scriptstyle{d-k}$};
\node at (-3,-4)[above left]{$\scriptstyle{d-k}$};
\node at (-7,-7)[left]{$\scriptstyle{s}$};
\node at (-7,7)[left]{$\scriptstyle{t}$};
\end{scope}
\begin{scope}[xshift=9cm]
\draw (6,5) -- (9,5);
\draw (6,-5) -- (9,-5);
\draw[->-=.5] (0,6) -- (6,6);
\draw[->-=.5] (0,-6) -- (6,-6);
\draw[->-=.7] (0,0) to[out=north east, in=west] (6,4);
\draw[->-=.7] (0,0) to[out=south east, in=west] (6,-4);
\draw[fill=white] (3,0) -- (0,3) -- (0,-3) -- cycle;
\node at (7,-7)[right]{$\scriptstyle{s}$};
\node at (7,7)[right]{$\scriptstyle{t}$};
\draw[fill=white] (7,-7) rectangle (6,-3);
\draw[fill=white] (7,7) rectangle (6,3);
\node at (3,4)[below right]{$\scriptstyle{d-k}$};
\node at (3,-4)[above right]{$\scriptstyle{d-k}$};
\end{scope}
\node at (0,6)[above]{$\scriptstyle{k+(s-d)}$};
\node at (0,-6)[below]{$\scriptstyle{k+(t-d)}$};
}\ }
\end{align*}
and
\begin{align*}
\mathord{\ \tikz[baseline=-.6ex, scale=.1]{
\begin{scope}[xshift=0cm]
\draw[white, double=black, double distance=0.4pt, ultra thick] (-9,-6) to[out=east, in=south] (-2,0) to[out=north, in=west](0,6);
\draw[-<-=.2, white, double=black, double distance=0.4pt, ultra thick] (-9,0) -- (0,0);
\draw[white, double=black, double distance=0.4pt, ultra thick] (-9,6) to[out=east, in=west] (0,-6);
\draw[white, double=black, double distance=0.4pt, ultra thick] (0,-6) to[out=east, in=west] (9,6);
\draw[-<-=.8, white, double=black, double distance=0.4pt, ultra thick] (0,0) -- (9,0);
\draw[white, double=black, double distance=0.4pt, ultra thick] (0,6) to[out=east, in=north] (2,0) to[out=south, in=west] (9,-6);
\end{scope}
\begin{scope}[xshift=-9cm]
\draw (-6,5) -- (-9,5);
\draw (-6,-5) -- (-9,-5);
\draw[->-=.5] (-6,6) -- (0,6);
\draw[->-=.5] (-6,-6) -- (0,-6);
\draw[->-=.5] (-6,4) to[out=east, in=north west] (0,0);
\draw[->-=.5] (-6,-4) to[out=east, in=south west] (0,0);
\draw[fill=white] (-3,0) -- (0,3) -- (0,-3) -- cycle;
\draw[fill=white] (-7,-7) rectangle (-6,-3);
\draw[fill=white] (-7,7) rectangle (-6,3);
\node at (-3,4)[below left]{$\scriptstyle{d-k}$};
\node at (-3,-4)[above left]{$\scriptstyle{d-k}$};
\node at (-7,-7)[left]{$\scriptstyle{s}$};
\node at (-7,7)[left]{$\scriptstyle{t}$};
\end{scope}
\begin{scope}[xshift=9cm]
\draw (6,5) -- (9,5);
\draw (6,-5) -- (9,-5);
\draw[->-=.5] (0,6) -- (6,6);
\draw[->-=.5] (0,-6) -- (6,-6);
\draw[->-=.7] (0,0) to[out=north east, in=west] (6,4);
\draw[->-=.7] (0,0) to[out=south east, in=west] (6,-4);
\draw[fill=white] (3,0) -- (0,3) -- (0,-3) -- cycle;
\node at (7,-7)[right]{$\scriptstyle{s}$};
\node at (7,7)[right]{$\scriptstyle{t}$};
\draw[fill=white] (7,-7) rectangle (6,-3);
\draw[fill=white] (7,7) rectangle (6,3);
\node at (3,4)[below right]{$\scriptstyle{d-k}$};
\node at (3,-4)[above right]{$\scriptstyle{d-k}$};
\end{scope}
\node at (0,6)[above]{$\scriptstyle{k+(s-d)}$};
\node at (0,-6)[below]{$\scriptstyle{k+(t-d)}$};
}\ }
&=\mathord{\ \tikz[baseline=-.6ex, scale=.1]{
\begin{scope}[xshift=0cm]
\draw[white, double=black, double distance=0.4pt, ultra thick] (-9,-6) to[out=east, in=south] (-2,0) to[out=north, in=west](0,6);
\draw[-<-=.2, white, double=black, double distance=0.4pt, ultra thick] (-9,0) -- (0,0);
\draw[white, double=black, double distance=0.4pt, ultra thick] (-9,6) to[out=east, in=west] (0,-6);
\draw[white, double=black, double distance=0.4pt, ultra thick] (0,-6) to[out=east, in=west] (9,6);
\draw[white, double=black, double distance=0.4pt, ultra thick] (0,0) -- (9,0);
\draw[white, double=black, double distance=0.4pt, ultra thick] (0,6) to[out=east, in=north] (2,0) to[out=south, in=west] (9,-6);
\end{scope}
\begin{scope}[xshift=-9cm]
\draw (-6,5) -- (-9,5);
\draw (-6,-5) -- (-9,-5);
\draw[->-=.9] (-6,6) -- (0,6);
\draw[->-=.5] (-6,-6) -- (0,-6);
\draw[->-=.5] (-6,4) to[out=east, in=north west] (0,0);
\draw[->-=.5] (-6,-4) to[out=east, in=south west] (0,0);
\draw[fill=white] (-3,0) -- (0,3) -- (0,-3) -- cycle;
\draw[fill=white] (-7,-7) rectangle (-6,-3);
\draw[fill=white] (-7,7) rectangle (-6,3);
\node at (-3,4)[below left]{$\scriptstyle{d-k}$};
\node at (-3,-4)[above left]{$\scriptstyle{d-k}$};
\node at (-7,-7)[left]{$\scriptstyle{s}$};
\node at (-7,7)[left]{$\scriptstyle{t}$};
\end{scope}
\begin{scope}[xshift=9cm]
\draw (6,5) -- (9,5);
\draw (6,-5) -- (9,-5);
\draw[->-=.5] (0,6) -- (6,6);
\draw[->-=.5] (0,-6) -- (6,-6);
\draw[->-=.7] (0,0) to[out=north east, in=west] (6,4);
\draw[->-=.7] (0,0) to[out=south east, in=west] (6,-4);
\draw[fill=white] (3,0) -- (0,3) -- (0,-3) -- cycle;
\node at (7,-7)[right]{$\scriptstyle{s}$};
\node at (7,7)[right]{$\scriptstyle{t}$};
\draw[fill=white] (-1,-7) rectangle (-2,3);
\draw[fill=white] (7,-7) rectangle (6,-3);
\draw[fill=white] (7,7) rectangle (6,3);
\node at (3,4)[below right]{$\scriptstyle{d-k}$};
\node at (3,-4)[above right]{$\scriptstyle{d-k}$};
\end{scope}
\node at (0,6)[above]{$\scriptstyle{k+(s-d)}$};
\node at (0,-6)[below]{$\scriptstyle{k+(t-d)}$};
}\ }\\
&=\left(-q^{\frac{1}{6}}\right)^{2(k+s-d)(d-k)}
\mathord{\ \tikz[baseline=-.6ex, scale=.1]{
\begin{scope}[xshift=0cm]
\draw[white, double=black, double distance=0.4pt, ultra thick] (-9,-6) to[out=east, in=west](0,-2);
\draw[-<-=.2, white, double=black, double distance=0.4pt, ultra thick] (-9,0) -- (0,0);
\draw[white, double=black, double distance=0.4pt, ultra thick] (-9,6) to[out=east, in=west] (0,-6);
\draw[white, double=black, double distance=0.4pt, ultra thick] (0,-6) to[out=east, in=west] (9,6);
\draw[white, double=black, double distance=0.4pt, ultra thick] (0,0) -- (9,0);
\draw[white, double=black, double distance=0.4pt, ultra thick] (0,-2) to[out=east, in=west] (9,-6);
\end{scope}
\begin{scope}[xshift=-9cm]
\draw (-6,5) -- (-9,5);
\draw (-6,-5) -- (-9,-5);
\draw[->-=.5] (-6,6) -- (0,6);
\draw[->-=.5] (-6,-6) -- (0,-6);
\draw[->-=.5] (-6,4) to[out=east, in=north west] (0,0);
\draw[->-=.5] (-6,-4) to[out=east, in=south west] (0,0);
\draw[fill=white] (-3,0) -- (0,3) -- (0,-3) -- cycle;
\draw[fill=white] (-7,-7) rectangle (-6,-3);
\draw[fill=white] (-7,7) rectangle (-6,3);
\node at (-3,4)[below left]{$\scriptstyle{d-k}$};
\node at (-3,-4)[above left]{$\scriptstyle{d-k}$};
\node at (-7,-7)[left]{$\scriptstyle{s}$};
\node at (-7,7)[left]{$\scriptstyle{t}$};
\end{scope}
\begin{scope}[xshift=9cm]
\draw (6,5) -- (9,5);
\draw (6,-5) -- (9,-5);
\draw[->-=.5] (0,6) -- (6,6);
\draw[->-=.5] (0,-6) -- (6,-6);
\draw[->-=.7] (0,0) to[out=north east, in=west] (6,4);
\draw[->-=.7] (0,0) to[out=south east, in=west] (6,-4);
\draw[fill=white] (3,0) -- (0,3) -- (0,-3) -- cycle;
\node at (7,-7)[right]{$\scriptstyle{s}$};
\node at (7,7)[right]{$\scriptstyle{t}$};
\draw[fill=white] (-1,-7) rectangle (-2,3);
\draw[fill=white] (7,-7) rectangle (6,-3);
\draw[fill=white] (7,7) rectangle (6,3);
\node at (3,4)[below right]{$\scriptstyle{d-k}$};
\node at (3,-4)[above right]{$\scriptstyle{d-k}$};
\end{scope}
\node at (-9,6)[above]{$\scriptstyle{k+(t-d)}$};
\node at (-9,-6)[below]{$\scriptstyle{k+(s-d)}$};
}\ }\\
&=q^{\frac{1}{3}(d-k)(k+s-d)}
\mathord{\ \tikz[baseline=-.6ex, scale=.1]{
\begin{scope}[xshift=0cm]
\draw[white, double=black, double distance=0.4pt, ultra thick] (-9,-6) to[out=east, in=west](0,-2);
\draw[-<-=.9, white, double=black, double distance=0.4pt, ultra thick] (-9,0) -- (0,0);
\draw[white, double=black, double distance=0.4pt, ultra thick] (-9,6) to[out=east, in=west] (0,-6);
\draw[white, double=black, double distance=0.4pt, ultra thick] (0,-6) to[out=east, in=west] (9,6);
\draw[white, double=black, double distance=0.4pt, ultra thick] (0,0) -- (9,0);
\draw[white, double=black, double distance=0.4pt, ultra thick] (0,-2) to[out=east, in=west] (9,-6);
\end{scope}
\begin{scope}[xshift=-9cm]
\draw (-6,5) -- (-9,5);
\draw (-6,-5) -- (-9,-5);
\draw[->-=.5] (-6,6) -- (0,6);
\draw[->-=.5] (-6,-6) -- (0,-6);
\draw[->-=.5] (-6,4) to[out=east, in=north west] (0,0);
\draw[->-=.5] (-6,-4) to[out=east, in=south west] (0,0);
\draw[fill=white] (-3,0) -- (0,3) -- (0,-3) -- cycle;
\draw[fill=white] (-7,-7) rectangle (-6,-3);
\draw[fill=white] (-7,7) rectangle (-6,3);
\node at (-3,4)[below left]{$\scriptstyle{d-k}$};
\node at (-3,-4)[above left]{$\scriptstyle{d-k}$};
\node at (-7,-7)[left]{$\scriptstyle{s}$};
\node at (-7,7)[left]{$\scriptstyle{t}$};
\end{scope}
\begin{scope}[xshift=9cm]
\draw (6,5) -- (9,5);
\draw (6,-5) -- (9,-5);
\draw[->-=.5] (0,6) -- (6,6);
\draw[->-=.5] (0,-6) -- (6,-6);
\draw[->-=.7] (0,0) to[out=north east, in=west] (6,4);
\draw[->-=.7] (0,0) to[out=south east, in=west] (6,-4);
\draw[fill=white] (3,0) -- (0,3) -- (0,-3) -- cycle;
\node at (7,-7)[right]{$\scriptstyle{s}$};
\node at (7,7)[right]{$\scriptstyle{t}$};
\draw[fill=white] (7,-7) rectangle (6,-3);
\draw[fill=white] (7,7) rectangle (6,3);
\node at (3,4)[below right]{$\scriptstyle{d-k}$};
\node at (3,-4)[above right]{$\scriptstyle{d-k}$};
\end{scope}
\node at (-9,6)[above]{$\scriptstyle{k+(t-d)}$};
\node at (-9,-6)[below]{$\scriptstyle{k+(s-d)}$};
}\ }.
\end{align*}
Let $m$ be positive integers and $\{k_{i}\}_{i}$ are non-negative integers satisfying $k_{i}\geq k_{i+1}$ for $0\leq i\leq m-1$. 
We set $m_{i}=(m-i)$, $k_{0}=d$, $t_{i}=k_{i}+(t-d)$, and $s_{i}=k_{i}+(s-d)$. 
Then, as the above calculation, we obtain
\begin{align*}
&\mathord{\ \tikz[baseline=-.6ex, scale=.1]{
\draw (-6,5) -- (-9,5);
\draw (-6,-5) -- (-9,-5);
\draw[->-=.5] (-6,6) -- (6,6);
\draw[->-=.5] (-6,-6) -- (6,-6);
\draw[->-=.5] (-6,4) to[out=east, in=north west] (0,0);
\draw[->-=.5] (-6,-4) to[out=east, in=south west] (0,0);
\draw[->-=.7] (0,0) to[out=north east, in=west] (6,4);
\draw[->-=.7] (0,0) to[out=south east, in=west] (6,-4);
\draw[fill=white] (-3,0) -- (0,3) -- (3,0) -- (0,-3) -- cycle;
\draw (0,3) -- (0,-3);
\draw[->-=.5] (7,5) -- (12,5);
\draw[->-=.5] (7,-5) -- (12,-5);
\draw[->-=.5] (20,5) -- (25,5);
\draw[->-=.5] (20,-5) -- (25,-5);
\draw (26,5) -- (28,5);
\draw (26,-5) -- (28,-5);
\draw[fill=white] (12,-6) rectangle (20,6);
\draw[fill=white] (-7,-7) rectangle (-6,-3);
\draw[fill=white] (-7,7) rectangle (-6,3);
\draw[fill=white] (7,-7) rectangle (6,-3);
\draw[fill=white] (7,7) rectangle (6,3);
\draw[fill=white] (25,-7) rectangle (26,-3);
\draw[fill=white] (25,7) rectangle (26,3);
\node at (16,0){$\scriptstyle{2m_{i}}$};
\node at (-7,-7)[left]{$\scriptstyle{s_{i-1}}$};
\node at (-7,7)[left]{$\scriptstyle{t_{i-1}}$};
\node at (7,-7)[right]{$\scriptstyle{s_{i-1}}$};
\node at (7,7)[right]{$\scriptstyle{t_{i-1}}$};
\node at (26,-7)[right]{$\scriptstyle{s_{i-1}}$};
\node at (26,7)[right]{$\scriptstyle{t_{i-1}}$};
\node at (0,6)[above]{$\scriptstyle{t_{i}}$};
\node at (0,-6)[below]{$\scriptstyle{s_{i}}$};
\node at (-3,4)[below left]{$\scriptstyle{k_{i-1}-k_{i}}$};
\node at (-3,-4)[above left]{$\scriptstyle{k_{i-1}-k_{i}}$};
}\ }\\
&\quad=q^{-\frac{m_{i}}{3}(k_{i-1}-k_{i})^{2}-m_{i}(k_{i-1}-k_{i})}q^{-\frac{m_{i}}{3}(k_{i-1}-k_{i})(k_{i}+s-d)}q^{-\frac{m_{i}-1}{3}(k_{i-1}-k_{i})(k_{i}+t-d)}\\
&\qquad\times q^{-\frac{2}{3}(k_{i-1}-k_{i})(k_{i}+t-d)}
\mathord{\ \tikz[baseline=-.6ex, scale=.1]{
\begin{scope}[xshift=0cm]
\draw[white, double=black, double distance=0.4pt, ultra thick] (-9,-6) to[out=east, in=west](-4,-6);
\draw[-<-=.9, white, double=black, double distance=0.4pt, ultra thick] (-9,0) -- (0,0);
\draw[white, double=black, double distance=0.4pt, ultra thick] (-9,6) to[out=east, in=west] (-4,-3);
\draw[white, double=black, double distance=0.4pt, ultra thick] (4,-3) to[out=east, in=west] (9,6);
\draw[white, double=black, double distance=0.4pt, ultra thick] (0,0) -- (9,0);
\draw[white, double=black, double distance=0.4pt, ultra thick] (4,-6) to[out=east, in=west] (9,-6);
\draw[fill=white] (-4,-7) rectangle (4,-2);
\node at (0,-5){$\scriptstyle{2m_{i}}$};
\end{scope}
\begin{scope}[xshift=-9cm]
\draw (-6,5) -- (-9,5);
\draw (-6,-5) -- (-9,-5);
\draw[->-=.5] (-6,6) -- (0,6);
\draw[->-=.5] (-6,-6) -- (0,-6);
\draw[->-=.5] (-6,4) to[out=east, in=north west] (0,0);
\draw[->-=.5] (-6,-4) to[out=east, in=south west] (0,0);
\draw[fill=white] (-3,0) -- (0,3) -- (0,-3) -- cycle;
\draw[fill=white] (-7,-7) rectangle (-6,-3);
\draw[fill=white] (-7,7) rectangle (-6,3);
\node at (-3,4)[below left]{$\scriptstyle{k_{i-1}-k_{i}}$};
\node at (-3,-4)[above left]{$\scriptstyle{k_{i-1}-k_{i}}$};
\node at (-7,-7)[left]{$\scriptstyle{s_{i-1}}$};
\node at (-7,7)[left]{$\scriptstyle{t_{i-1}}$};
\end{scope}
\begin{scope}[xshift=9cm]
\draw (6,5) -- (9,5);
\draw (6,-5) -- (9,-5);
\draw[->-=.5] (0,6) -- (6,6);
\draw[->-=.5] (0,-6) -- (6,-6);
\draw[->-=.7] (0,0) to[out=north east, in=west] (6,4);
\draw[->-=.7] (0,0) to[out=south east, in=west] (6,-4);
\draw[fill=white] (3,0) -- (0,3) -- (0,-3) -- cycle;
\node at (7,-7)[right]{$\scriptstyle{s_{i-1}}$};
\node at (7,7)[right]{$\scriptstyle{t_{i-1}}$};
\draw[fill=white] (7,-7) rectangle (6,-3);
\draw[fill=white] (7,7) rectangle (6,3);
\node at (3,4)[below right]{$\scriptstyle{k_{i-1}-k_{i}}$};
\node at (3,-4)[above right]{$\scriptstyle{k_{i-1}-k_{i}}$};
\end{scope}
\node at (-11,6)[above]{$\scriptstyle{t_{i}}$};
\node at (-9,-6)[below]{$\scriptstyle{s_{i}}$};
}\ }\\
&\quad=q^{-\frac{m_{i}}{3}(k_{i-1}-k_{i})^{2}-m_{i}(k_{i-1}-k_{i})}q^{-\frac{m_{i}}{3}(k_{i-1}-k_{i})(k_{i}+s-d)}q^{-\frac{m_{i}}{3}(k_{i-1}-k_{i})(k_{i}+t-d)}\\
&\qquad\times\mathord{\ \tikz[baseline=-.6ex, scale=.1]{
\begin{scope}[xshift=0cm]
\draw (-9,0) -- (-8,0);
\draw (9,0) -- (8,0);
\draw[-<-=.5, white, double=black, double distance=0.4pt, ultra thick] (-7,4) -- (7,4);
\draw[->-=.6, white, double=black, double distance=0.4pt, ultra thick] (-7,-3) -- (-4,-3);
\draw[->-=.6, white, double=black, double distance=0.4pt, ultra thick] (4,-3) -- (7,-3);
\draw[white, double=black, double distance=0.4pt, ultra thick] (-9,-6) -- (9,-6);
\draw[fill=white] (-4,-7) rectangle (4,-2);
\draw[fill=white] (-8,-4) rectangle (-7,7);
\draw[fill=white] (8,-4) rectangle (7,7);
\node at (0,-5) {$\scriptstyle{2m_{i}}$};
\end{scope}
\begin{scope}[xshift=-9cm]
\draw (-6,5) -- (-9,5);
\draw (-6,-5) -- (-9,-5);
\draw[->-=.5] (-6,6) -- (1,6);
\draw[->-=.5] (-6,-6) -- (0,-6);
\draw[->-=.5] (-6,4) to[out=east, in=north west] (0,0);
\draw[->-=.5] (-6,-4) to[out=east, in=south west] (0,0);
\draw[fill=white] (-3,0) -- (0,3) -- (0,-3) -- cycle;
\draw[fill=white] (-7,-7) rectangle (-6,-3);
\draw[fill=white] (-7,7) rectangle (-6,3);
\node at (-3,4)[below left]{$\scriptstyle{k_{i-1}-k_{i}}$};
\node at (-3,-4)[above left]{$\scriptstyle{k_{i-1}-k_{i}}$};
\node at (-7,-7)[left]{$\scriptstyle{s_{i-1}}$};
\node at (-7,7)[left]{$\scriptstyle{t_{i-1}}$};
\end{scope}
\begin{scope}[xshift=9cm]
\draw (6,5) -- (9,5);
\draw (6,-5) -- (9,-5);
\draw[->-=.5] (-1,6) -- (6,6);
\draw[->-=.5] (0,-6) -- (6,-6);
\draw[->-=.7] (0,0) to[out=north east, in=west] (6,4);
\draw[->-=.7] (0,0) to[out=south east, in=west] (6,-4);
\draw[fill=white] (3,0) -- (0,3) -- (0,-3) -- cycle;
\node at (7,-7)[right]{$\scriptstyle{s_{i-1}}$};
\node at (7,7)[right]{$\scriptstyle{t_{i-1}}$};
\draw[fill=white] (7,-7) rectangle (6,-3);
\draw[fill=white] (7,7) rectangle (6,3);
\node at (3,4)[below right]{$\scriptstyle{k_{i-1}-k_{i}}$};
\node at (3,-4)[above right]{$\scriptstyle{k_{i-1}-k_{i}}$};
\end{scope}
\node at (-11,6)[above]{$\scriptstyle{t_{i}}$};
\node at (-9,-6)[below]{$\scriptstyle{s_{i}}$};
\node at (0,4)[below]{$\scriptstyle{k_{i-1}-k_{i}}$};
}\ }\\
&\quad=q^{-\frac{m_{i}}{3}(k_{i-1}-k_{i})^{2}-m_{i}(k_{i-1}-k_{i})}q^{-\frac{m_{i}}{3}(k_{i-1}-k_{i})k_{i}}q^{-\frac{m_{i}}{3}(k_{i-1}-k_{i})(k_{i}+\delta)}\\
&\qquad\times\mathord{\ \tikz[baseline=-.6ex, scale=.1]{
\begin{scope}[xshift=0cm]
\draw (-9,0) -- (-8,0);
\draw (9,0) -- (8,0);
\draw[-<-=.5, white, double=black, double distance=0.4pt, ultra thick] (-7,4) -- (7,4);
\draw[->-=.6, white, double=black, double distance=0.4pt, ultra thick] (-7,-3) -- (-4,-3);
\draw[->-=.6, white, double=black, double distance=0.4pt, ultra thick] (4,-3) -- (7,-3);
\draw[white, double=black, double distance=0.4pt, ultra thick] (-9,-6) -- (9,-6);
\draw[fill=white] (-4,-7) rectangle (4,-2);
\draw[fill=white] (-8,-4) rectangle (-7,7);
\draw[fill=white] (8,-4) rectangle (7,7);
\node at (0,-5) {$\scriptstyle{2m_{i}}$};
\end{scope}
\begin{scope}[xshift=-9cm]
\draw (-6,5) -- (-9,5);
\draw (-6,-5) -- (-9,-5);
\draw[->-=.5] (-6,6) -- (1,6);
\draw[->-=.5] (-6,-6) -- (0,-6);
\draw[->-=.5] (-6,4) to[out=east, in=north west] (0,0);
\draw[->-=.5] (-6,-4) to[out=east, in=south west] (0,0);
\draw[fill=white] (-3,0) -- (0,3) -- (0,-3) -- cycle;
\draw[fill=white] (-7,-7) rectangle (-6,-3);
\draw[fill=white] (-7,7) rectangle (-6,3);
\node at (-3,4)[below left]{$\scriptstyle{k_{i-1}-k_{i}}$};
\node at (-3,-4)[above left]{$\scriptstyle{k_{i-1}-k_{i}}$};
\node at (-7,-7)[left]{$\scriptstyle{t_{i-1}}$};
\node at (-7,7)[left]{$\scriptstyle{t_{i-1}}$};
\end{scope}
\begin{scope}[xshift=9cm]
\draw (6,5) -- (9,5);
\draw (6,-5) -- (9,-5);
\draw[->-=.5] (-1,6) -- (6,6);
\draw[->-=.5] (0,-6) -- (6,-6);
\draw[->-=.7] (0,0) to[out=north east, in=west] (6,4);
\draw[->-=.7] (0,0) to[out=south east, in=west] (6,-4);
\draw[fill=white] (3,0) -- (0,3) -- (0,-3) -- cycle;
\node at (7,-7)[right]{$\scriptstyle{t_{i-1}}$};
\node at (7,7)[right]{$\scriptstyle{t_{i-1}}$};
\draw[fill=white] (7,-7) rectangle (6,-3);
\draw[fill=white] (7,7) rectangle (6,3);
\node at (3,4)[below right]{$\scriptstyle{k_{i-1}-k_{i}}$};
\node at (3,-4)[above right]{$\scriptstyle{k_{i-1}-k_{i}}$};
\end{scope}
\node at (-11,6)[above]{$\scriptstyle{t_{i}}$};
\node at (-9,-6)[below]{$\scriptstyle{s_{i}}$};
\node at (0,4)[below]{$\scriptstyle{k_{i-1}-k_{i}}$};
}\ }\\
&\quad=q^{-\frac{m_{i}}{3}k_{i-1}(k_{i-1}+\delta)-m_{i}k_{i-1}}q^{\frac{m_{i}}{3}k_{i}(k_{i}+\delta)+m_{i}k_{i}}
\mathord{\ \tikz[baseline=-.6ex, scale=.1]{
\begin{scope}[xshift=0cm]
\draw (-9,0) -- (-8,0);
\draw (9,0) -- (8,0);
\draw[-<-=.5, white, double=black, double distance=0.4pt, ultra thick] (-7,4) -- (7,4);
\draw[->-=.6, white, double=black, double distance=0.4pt, ultra thick] (-7,-3) -- (-4,-3);
\draw[->-=.6, white, double=black, double distance=0.4pt, ultra thick] (4,-3) -- (7,-3);
\draw[white, double=black, double distance=0.4pt, ultra thick] (-9,-6) -- (9,-6);
\draw[fill=white] (-4,-7) rectangle (4,-2);
\draw[fill=white] (-8,-4) rectangle (-7,7);
\draw[fill=white] (8,-4) rectangle (7,7);
\node at (0,-5) {$\scriptstyle{2m_{i}}$};
\end{scope}
\begin{scope}[xshift=-9cm]
\draw (-6,5) -- (-9,5);
\draw (-6,-5) -- (-9,-5);
\draw[->-=.5] (-6,6) -- (1,6);
\draw[->-=.5] (-6,-6) -- (0,-6);
\draw[->-=.5] (-6,4) to[out=east, in=north west] (0,0);
\draw[->-=.5] (-6,-4) to[out=east, in=south west] (0,0);
\draw[fill=white] (-3,0) -- (0,3) -- (0,-3) -- cycle;
\draw[fill=white] (-7,-7) rectangle (-6,-3);
\draw[fill=white] (-7,7) rectangle (-6,3);
\node at (-3,4)[below left]{$\scriptstyle{k_{i-1}-k_{i}}$};
\node at (-3,-4)[above left]{$\scriptstyle{k_{i-1}-k_{i}}$};
\node at (-7,-7)[left]{$\scriptstyle{t_{i-1}}$};
\node at (-7,7)[left]{$\scriptstyle{t_{i-1}}$};
\end{scope}
\begin{scope}[xshift=9cm]
\draw (6,5) -- (9,5);
\draw (6,-5) -- (9,-5);
\draw[->-=.5] (-1,6) -- (6,6);
\draw[->-=.5] (0,-6) -- (6,-6);
\draw[->-=.7] (0,0) to[out=north east, in=west] (6,4);
\draw[->-=.7] (0,0) to[out=south east, in=west] (6,-4);
\draw[fill=white] (3,0) -- (0,3) -- (0,-3) -- cycle;
\node at (7,-7)[right]{$\scriptstyle{t_{i-1}}$};
\node at (7,7)[right]{$\scriptstyle{t_{i-1}}$};
\draw[fill=white] (7,-7) rectangle (6,-3);
\draw[fill=white] (7,7) rectangle (6,3);
\node at (3,4)[below right]{$\scriptstyle{k_{i-1}-k_{i}}$};
\node at (3,-4)[above right]{$\scriptstyle{k_{i-1}-k_{i}}$};
\end{scope}
\node at (-11,6)[above]{$\scriptstyle{t_{i}}$};
\node at (-9,-6)[below]{$\scriptstyle{s_{i}}$};
\node at (0,4)[below]{$\scriptstyle{k_{i-1}-k_{i}}$};
}\ }.
\end{align*}
Let $\tau_{0}$ be the parallel $m$ full twist in the LHS of Theorem~\ref{parallelfull} and
\[
\tau_{i}
=\mathord{\ \tikz[baseline=-.6ex, scale=.1]{
\begin{scope}[xshift=0cm]
\draw (-9,0) -- (-8,0);
\draw (9,0) -- (8,0);
\draw[-<-=.5, white, double=black, double distance=0.4pt, ultra thick] (-7,4) -- (7,4);
\draw[->-=.6, white, double=black, double distance=0.4pt, ultra thick] (-7,-3) -- (-4,-3);
\draw[->-=.6, white, double=black, double distance=0.4pt, ultra thick] (4,-3) -- (7,-3);
\draw[white, double=black, double distance=0.4pt, ultra thick] (-9,-6) -- (9,-6);
\draw[fill=white] (-4,-7) rectangle (4,-2);
\draw[fill=white] (-8,-4) rectangle (-7,7);
\draw[fill=white] (8,-4) rectangle (7,7);
\node at (0,-5) {$\scriptstyle{2m_{i}}$};
\end{scope}
\begin{scope}[xshift=-9cm]
\draw (-6,5) -- (-9,5);
\draw (-6,-5) -- (-9,-5);
\draw[->-=.5] (-6,6) -- (1,6);
\draw[->-=.5] (-6,-6) -- (0,-6);
\draw[->-=.5] (-6,4) to[out=east, in=north west] (0,0);
\draw[->-=.5] (-6,-4) to[out=east, in=south west] (0,0);
\draw[fill=white] (-3,0) -- (0,3) -- (0,-3) -- cycle;
\draw[fill=white] (-7,-7) rectangle (-6,-3);
\draw[fill=white] (-7,7) rectangle (-6,3);
\node at (-3,4)[below left]{$\scriptstyle{k_{i-1}-k_{i}}$};
\node at (-3,-4)[above left]{$\scriptstyle{k_{i-1}-k_{i}}$};
\node at (-7,-7)[left]{$\scriptstyle{s_{i-1}}$};
\node at (-7,7)[left]{$\scriptstyle{t_{i-1}}$};
\end{scope}
\begin{scope}[xshift=9cm]
\draw (6,5) -- (9,5);
\draw (6,-5) -- (9,-5);
\draw[->-=.5] (-1,6) -- (6,6);
\draw[->-=.5] (0,-6) -- (6,-6);
\draw[->-=.7] (0,0) to[out=north east, in=west] (6,4);
\draw[->-=.7] (0,0) to[out=south east, in=west] (6,-4);
\draw[fill=white] (3,0) -- (0,3) -- (0,-3) -- cycle;
\node at (7,-7)[right]{$\scriptstyle{s_{i-1}}$};
\node at (7,7)[right]{$\scriptstyle{t_{i-1}}$};
\draw[fill=white] (7,-7) rectangle (6,-3);
\draw[fill=white] (7,7) rectangle (6,3);
\node at (3,4)[below right]{$\scriptstyle{k_{i-1}-k_{i}}$};
\node at (3,-4)[above right]{$\scriptstyle{k_{i-1}-k_{i}}$};
\end{scope}
\node at (-11,6)[above]{$\scriptstyle{t_{i}}$};
\node at (-9,-6)[below]{$\scriptstyle{s_{i}}$};
\node at (0,4)[below]{$\scriptstyle{k_{i-1}-k_{i}}$};
}\ }
\]
for $i=1,2,\dots,m-1$.
Then,
we apply Theorem~\ref{parallelfull}, the index of summation is $k_{i+1}$, to the left-most full twist in the box
$
\mathord{\ \tikz[baseline=-.6ex, scale=.1]{
\node[draw, rectangle] {$\scriptstyle{2m_{i}}$};
}\ }$ 
of $\tau_{i}$ and the above deformation.
\begin{align*}
\tau_{i}
&=q^{-\frac{1}{3}k_{i}(k_{i}+\delta)-\frac{k_{i}}{2}}
\sum_{k_{i+1}=0}^{k_{i}}
q^{k_{i+1}(k_{i+1}+\delta)+\frac{k_{i+1}}{2}}
\frac{(q)_{k_{i}+\delta}}{(q)_{k_{i+1}+\delta}}{k_{i} \choose k_{i+1}}_{q}\\
&\times q^{-\frac{m_{i+1}}{3}k_{i}(k_{i}+\delta)-m_{i+1}k_{i}}q^{\frac{m_{i+1}}{3}k_{i+1}(k_{i+1}+\delta)+m_{i+1}k_{i+1}}
\mathord{\ \tikz[baseline=-.6ex, scale=.1]{
\begin{scope}[xshift=0cm]
\draw (-9,0) -- (-8,0);
\draw (9,0) -- (8,0);
\draw[-<-=.5, white, double=black, double distance=0.4pt, ultra thick] (-7,4) -- (7,4);
\draw[->-=.6, white, double=black, double distance=0.4pt, ultra thick] (-7,-3) -- (-4,-3);
\draw[->-=.6, white, double=black, double distance=0.4pt, ultra thick] (4,-3) -- (7,-3);
\draw[white, double=black, double distance=0.4pt, ultra thick] (-9,-6) -- (9,-6);
\draw[fill=lightgray] (-4,-7) rectangle (4,-2);
\draw[fill=white] (-8,-4) rectangle (-7,7);
\draw[fill=white] (8,-4) rectangle (7,7);
\node at (0,-5) {$\scriptstyle{\tau_{i+1}}$};
\end{scope}
\begin{scope}[xshift=-9cm]
\draw (-6,5) -- (-9,5);
\draw (-6,-5) -- (-9,-5);
\draw[->-=.5] (-6,6) -- (1,6);
\draw[->-=.5] (-6,-6) -- (0,-6);
\draw[->-=.5] (-6,4) to[out=east, in=north west] (0,0);
\draw[->-=.5] (-6,-4) to[out=east, in=south west] (0,0);
\draw[fill=white] (-3,0) -- (0,3) -- (0,-3) -- cycle;
\draw[fill=white] (-7,-7) rectangle (-6,-3);
\draw[fill=white] (-7,7) rectangle (-6,3);
\node at (-3,4)[below left]{$\scriptstyle{k_{i-1}-k_{i}}$};
\node at (-3,-4)[above left]{$\scriptstyle{k_{i-1}-k_{i}}$};
\node at (-7,-7)[left]{$\scriptstyle{t_{i-1}}$};
\node at (-7,7)[left]{$\scriptstyle{t_{i-1}}$};
\end{scope}
\begin{scope}[xshift=9cm]
\draw (6,5) -- (9,5);
\draw (6,-5) -- (9,-5);
\draw[->-=.5] (-1,6) -- (6,6);
\draw[->-=.5] (0,-6) -- (6,-6);
\draw[->-=.7] (0,0) to[out=north east, in=west] (6,4);
\draw[->-=.7] (0,0) to[out=south east, in=west] (6,-4);
\draw[fill=white] (3,0) -- (0,3) -- (0,-3) -- cycle;
\node at (7,-7)[right]{$\scriptstyle{t_{i-1}}$};
\node at (7,7)[right]{$\scriptstyle{t_{i-1}}$};
\draw[fill=white] (7,-7) rectangle (6,-3);
\draw[fill=white] (7,7) rectangle (6,3);
\node at (3,4)[below right]{$\scriptstyle{k_{i-1}-k_{i}}$};
\node at (3,-4)[above right]{$\scriptstyle{k_{i-1}-k_{i}}$};
\end{scope}
\node at (-11,6)[above]{$\scriptstyle{t_{i}}$};
\node at (-9,-6)[below]{$\scriptstyle{s_{i}}$};
\node at (0,4)[below]{$\scriptstyle{k_{i-1}-k_{i}}$};
}\ }\\
&=\sum_{k_{i+1}=0}^{k_{i}}
q^{-\frac{m_{i}}{3}k_{i}(k_{i}+\delta)+\frac{m_{i+1}}{3}k_{i+1}(k_{i+1}+\delta)}
q^{-\frac{1}{2}(k_{i}-k_{i+1})}q^{-m_{i+1}(k_{i}-k_{i+1})}
q^{k_{i+1}(k_{i+1}+\delta)}\\
&\quad\times 
\frac{(q)_{k_{i}+\delta}}{(q)_{k_{i+1}+\delta}}{k_{i} \choose k_{i+1}}_{q}
\mathord{\ \tikz[baseline=-.6ex, scale=.1]{
\begin{scope}[xshift=0cm]
\draw (-9,0) -- (-8,0);
\draw (9,0) -- (8,0);
\draw[-<-=.5, white, double=black, double distance=0.4pt, ultra thick] (-7,4) -- (7,4);
\draw[->-=.6, white, double=black, double distance=0.4pt, ultra thick] (-7,-3) -- (-4,-3);
\draw[->-=.6, white, double=black, double distance=0.4pt, ultra thick] (4,-3) -- (7,-3);
\draw[white, double=black, double distance=0.4pt, ultra thick] (-9,-6) -- (9,-6);
\draw[fill=lightgray] (-4,-7) rectangle (4,-2);
\draw[fill=white] (-8,-4) rectangle (-7,7);
\draw[fill=white] (8,-4) rectangle (7,7);
\node at (0,-5) {$\scriptstyle{\tau_{i+1}}$};
\end{scope}
\begin{scope}[xshift=-9cm]
\draw (-6,5) -- (-9,5);
\draw (-6,-5) -- (-9,-5);
\draw[->-=.5] (-6,6) -- (1,6);
\draw[->-=.5] (-6,-6) -- (0,-6);
\draw[->-=.5] (-6,4) to[out=east, in=north west] (0,0);
\draw[->-=.5] (-6,-4) to[out=east, in=south west] (0,0);
\draw[fill=white] (-3,0) -- (0,3) -- (0,-3) -- cycle;
\draw[fill=white] (-7,-7) rectangle (-6,-3);
\draw[fill=white] (-7,7) rectangle (-6,3);
\node at (-3,4)[below left]{$\scriptstyle{k_{i-1}-k_{i}}$};
\node at (-3,-4)[above left]{$\scriptstyle{k_{i-1}-k_{i}}$};
\node at (-7,-7)[left]{$\scriptstyle{t_{i-1}}$};
\node at (-7,7)[left]{$\scriptstyle{t_{i-1}}$};
\end{scope}
\begin{scope}[xshift=9cm]
\draw (6,5) -- (9,5);
\draw (6,-5) -- (9,-5);
\draw[->-=.5] (-1,6) -- (6,6);
\draw[->-=.5] (0,-6) -- (6,-6);
\draw[->-=.7] (0,0) to[out=north east, in=west] (6,4);
\draw[->-=.7] (0,0) to[out=south east, in=west] (6,-4);
\draw[fill=white] (3,0) -- (0,3) -- (0,-3) -- cycle;
\node at (7,-7)[right]{$\scriptstyle{t_{i-1}}$};
\node at (7,7)[right]{$\scriptstyle{t_{i-1}}$};
\draw[fill=white] (7,-7) rectangle (6,-3);
\draw[fill=white] (7,7) rectangle (6,3);
\node at (3,4)[below right]{$\scriptstyle{k_{i-1}-k_{i}}$};
\node at (3,-4)[above right]{$\scriptstyle{k_{i-1}-k_{i}}$};
\end{scope}
\node at (-11,6)[above]{$\scriptstyle{t_{i}}$};
\node at (-9,-6)[below]{$\scriptstyle{s_{i}}$};
\node at (0,4)[below]{$\scriptstyle{k_{i-1}-k_{i}}$};
}\ }\\
&=\sum_{k_{i+1}=0}^{k_{i}}
q^{-\frac{m_{i}}{3}k_{i}(k_{i}+\delta)+\frac{m_{i+1}}{3}k_{i+1}(k_{i+1}+\delta)}
q^{-\frac{1}{2}(k_{i}-k_{i+1})}q^{-m_{i}k_{i}+m_{i+1}k_{i+1}}\\
&\quad\times 
q^{k_{i}}q^{k_{i+1}(k_{i+1}+\delta)}
\frac{(q)_{k_{i}+\delta}}{(q)_{k_{i+1}+\delta}}{k_{i} \choose k_{i+1}}_{q}
\mathord{\ \tikz[baseline=-.6ex, scale=.1]{
\begin{scope}[xshift=0cm]
\draw (-9,0) -- (-8,0);
\draw (9,0) -- (8,0);
\draw[-<-=.5, white, double=black, double distance=0.4pt, ultra thick] (-7,4) -- (7,4);
\draw[->-=.6, white, double=black, double distance=0.4pt, ultra thick] (-7,-3) -- (-4,-3);
\draw[->-=.6, white, double=black, double distance=0.4pt, ultra thick] (4,-3) -- (7,-3);
\draw[white, double=black, double distance=0.4pt, ultra thick] (-9,-6) -- (9,-6);
\draw[fill=lightgray] (-4,-7) rectangle (4,-2);
\draw[fill=white] (-8,-4) rectangle (-7,7);
\draw[fill=white] (8,-4) rectangle (7,7);
\node at (0,-5) {$\scriptstyle{\tau_{i+1}}$};
\end{scope}
\begin{scope}[xshift=-9cm]
\draw (-6,5) -- (-9,5);
\draw (-6,-5) -- (-9,-5);
\draw[->-=.5] (-6,6) -- (1,6);
\draw[->-=.5] (-6,-6) -- (0,-6);
\draw[->-=.5] (-6,4) to[out=east, in=north west] (0,0);
\draw[->-=.5] (-6,-4) to[out=east, in=south west] (0,0);
\draw[fill=white] (-3,0) -- (0,3) -- (0,-3) -- cycle;
\draw[fill=white] (-7,-7) rectangle (-6,-3);
\draw[fill=white] (-7,7) rectangle (-6,3);
\node at (-3,4)[below left]{$\scriptstyle{k_{i-1}-k_{i}}$};
\node at (-3,-4)[above left]{$\scriptstyle{k_{i-1}-k_{i}}$};
\node at (-7,-7)[left]{$\scriptstyle{t_{i-1}}$};
\node at (-7,7)[left]{$\scriptstyle{t_{i-1}}$};
\end{scope}
\begin{scope}[xshift=9cm]
\draw (6,5) -- (9,5);
\draw (6,-5) -- (9,-5);
\draw[->-=.5] (-1,6) -- (6,6);
\draw[->-=.5] (0,-6) -- (6,-6);
\draw[->-=.7] (0,0) to[out=north east, in=west] (6,4);
\draw[->-=.7] (0,0) to[out=south east, in=west] (6,-4);
\draw[fill=white] (3,0) -- (0,3) -- (0,-3) -- cycle;
\node at (7,-7)[right]{$\scriptstyle{t_{i-1}}$};
\node at (7,7)[right]{$\scriptstyle{t_{i-1}}$};
\draw[fill=white] (7,-7) rectangle (6,-3);
\draw[fill=white] (7,7) rectangle (6,3);
\node at (3,4)[below right]{$\scriptstyle{k_{i-1}-k_{i}}$};
\node at (3,-4)[above right]{$\scriptstyle{k_{i-1}-k_{i}}$};
\end{scope}
\node at (-11,6)[above]{$\scriptstyle{t_{i}}$};
\node at (-9,-6)[below]{$\scriptstyle{s_{i}}$};
\node at (0,4)[below]{$\scriptstyle{k_{i-1}-k_{i}}$};
}\ },
\end{align*}
where the shaded box with $\tau_{i+1}$ means replacement of the box with $\tau_{i+1}$.
We remark that $m_{0}=m$, $t_{0}=t$, $s_{0}=s$, and $\min\{s_{i-1},t_{i-1}\}=k_{i-1}$.
By using a similar way to (\ref{uniontri}), 
we can confirm that 
\[
 \mathord{\ \tikz[baseline=-.6ex, scale=.1]{
\begin{scope}[xshift=0cm]
\draw (-9,0) -- (-8,0);
\draw (9,0) -- (8,0);
\draw[-<-=.5, white, double=black, double distance=0.4pt, ultra thick] (-7,4) -- (7,4);
\draw[->-=.6, white, double=black, double distance=0.4pt, ultra thick] (-7,-3) -- (-4,-3);
\draw[->-=.6, white, double=black, double distance=0.4pt, ultra thick] (4,-3) -- (7,-3);
\draw[white, double=black, double distance=0.4pt, ultra thick] (-9,-6) -- (9,-6);
\draw[fill=lightgray] (-4,-7) rectangle (4,-2);
\draw[fill=white] (-8,-4) rectangle (-7,7);
\draw[fill=white] (8,-4) rectangle (7,7);
\node at (0,-5) {$\scriptstyle{\tau_{i+1}}$};
\end{scope}
\begin{scope}[xshift=-9cm]
\draw (-6,5) -- (-9,5);
\draw (-6,-5) -- (-9,-5);
\draw[->-=.5] (-6,6) -- (1,6);
\draw[->-=.5] (-6,-6) -- (0,-6);
\draw[->-=.5] (-6,4) to[out=east, in=north west] (0,0);
\draw[->-=.5] (-6,-4) to[out=east, in=south west] (0,0);
\draw[fill=white] (-3,0) -- (0,3) -- (0,-3) -- cycle;
\draw[fill=white] (-7,-7) rectangle (-6,-3);
\draw[fill=white] (-7,7) rectangle (-6,3);
\node at (-3,4)[below left]{$\scriptstyle{k_{i-1}-k_{i}}$};
\node at (-3,-4)[above left]{$\scriptstyle{k_{i-1}-k_{i}}$};
\node at (-7,-7)[left]{$\scriptstyle{t_{i-1}}$};
\node at (-7,7)[left]{$\scriptstyle{t_{i-1}}$};
\end{scope}
\begin{scope}[xshift=9cm]
\draw (6,5) -- (9,5);
\draw (6,-5) -- (9,-5);
\draw[->-=.5] (-1,6) -- (6,6);
\draw[->-=.5] (0,-6) -- (6,-6);
\draw[->-=.7] (0,0) to[out=north east, in=west] (6,4);
\draw[->-=.7] (0,0) to[out=south east, in=west] (6,-4);
\draw[fill=white] (3,0) -- (0,3) -- (0,-3) -- cycle;
\node at (7,-7)[right]{$\scriptstyle{t_{i-1}}$};
\node at (7,7)[right]{$\scriptstyle{t_{i-1}}$};
\draw[fill=white] (7,-7) rectangle (6,-3);
\draw[fill=white] (7,7) rectangle (6,3);
\node at (3,4)[below right]{$\scriptstyle{k_{i-1}-k_{i}}$};
\node at (3,-4)[above right]{$\scriptstyle{k_{i-1}-k_{i}}$};
\end{scope}
\node at (-11,6)[above]{$\scriptstyle{t_{i}}$};
\node at (-9,-6)[below]{$\scriptstyle{s_{i}}$};
\node at (0,4)[below]{$\scriptstyle{k_{i-1}-k_{i}}$};
}\ }
=\mathord{\ \tikz[baseline=-.6ex, scale=.12]{
\begin{scope}[xshift=0cm]
\draw (-9,0) -- (-8,0);
\draw (9,0) -- (8,0);
\draw[-<-=.5, white, double=black, double distance=0.4pt, ultra thick] (-7,4) -- (7,4);
\draw[->-=.6, white, double=black, double distance=0.4pt, ultra thick] (-7,-3) -- (-4,-3);
\draw[->-=.6, white, double=black, double distance=0.4pt, ultra thick] (4,-3) -- (7,-3);
\draw[white, double=black, double distance=0.4pt, ultra thick] (-9,-6) -- (9,-6);
\draw[fill=white] (-4,-7) rectangle (4,-2);
\draw[fill=white] (-8,-4) rectangle (-7,7);
\draw[fill=white] (8,-4) rectangle (7,7);
\node at (0,-5) {$\scriptstyle{2m_{i+1}}$};
\end{scope}
\begin{scope}[xshift=-9cm]
\draw (-6,5) -- (-9,5);
\draw (-6,-5) -- (-9,-5);
\draw[->-=.5] (-6,6) -- (1,6);
\draw[->-=.5] (-6,-6) -- (0,-6);
\draw[->-=.5] (-6,4) to[out=east, in=north west] (0,0);
\draw[->-=.5] (-6,-4) to[out=east, in=south west] (0,0);
\draw[fill=white] (-3,0) -- (0,3) -- (0,-3) -- cycle;
\draw[fill=white] (-7,-7) rectangle (-6,-3);
\draw[fill=white] (-7,7) rectangle (-6,3);
\node at (-3,4)[below left]{$\scriptstyle{k_{i-1}-k_{i+1}}$};
\node at (-3,-4)[above left]{$\scriptstyle{k_{i-1}-k_{i+1}}$};
\node at (-7,-7)[left]{$\scriptstyle{s_{i-1}}$};
\node at (-7,7)[left]{$\scriptstyle{t_{i-1}}$};
\end{scope}
\begin{scope}[xshift=9cm]
\draw (6,5) -- (9,5);
\draw (6,-5) -- (9,-5);
\draw[->-=.5] (-1,6) -- (6,6);
\draw[->-=.5] (0,-6) -- (6,-6);
\draw[->-=.7] (0,0) to[out=north east, in=west] (6,4);
\draw[->-=.7] (0,0) to[out=south east, in=west] (6,-4);
\draw[fill=white] (3,0) -- (0,3) -- (0,-3) -- cycle;
\node at (7,-7)[right]{$\scriptstyle{s_{i-1}}$};
\node at (7,7)[right]{$\scriptstyle{t_{i-1}}$};
\draw[fill=white] (7,-7) rectangle (6,-3);
\draw[fill=white] (7,7) rectangle (6,3);
\node at (3,4)[below right]{$\scriptstyle{k_{i-1}-k_{i+1}}$};
\node at (3,-4)[above right]{$\scriptstyle{k_{i-1}-k_{i+1}}$};
\end{scope}
\node at (-11,6)[above]{$\scriptstyle{t_{i+1}}$};
\node at (-9,-6)[below]{$\scriptstyle{s_{i+1}}$};
\node at (0,4)[below]{$\scriptstyle{k_{i-1}-k_{i+1}}$};
}\ }.
\]
Consequently, 
\begin{align*}
\tau_{0}&=\sum_{k_{0}\geq k_{1}\geq\cdots\geq k_{m}\geq 0}
\prod_{i=0}^{m-1}
q^{-\frac{m_{i}}{3}k_{i}(k_{i}+\delta)+\frac{m_{i+1}}{3}k_{i+1}(k_{i+1}+\delta)}
q^{-\frac{1}{2}(k_{i}-k_{i+1})}q^{-m_{i}k_{i}+m_{i+1}k_{i+1}}\\
&\quad\times 
q^{k_{i+1}(k_{i+1}+\delta)+k_{i}}
\frac{(q)_{k_{i}+\delta}}{(q)_{k_{i+1}+\delta}}{k_{i} \choose k_{i+1}}_{q}
\mathord{\ \tikz[baseline=-.6ex, scale=.1]{
\draw (-6,5) -- (-9,5);
\draw (-6,-5) -- (-9,-5);
\draw (6,5) -- (9,5);
\draw (6,-5) -- (9,-5);
\draw[->-=.5] (-6,6) -- (6,6);
\draw[->-=.5] (-6,-6) -- (6,-6);
\draw[->-=.5] (-6,4) to[out=east, in=north west] (0,0);
\draw[->-=.5] (-6,-4) to[out=east, in=south west] (0,0);
\draw[->-=.7] (0,0) to[out=north east, in=west] (6,4);
\draw[->-=.7] (0,0) to[out=south east, in=west] (6,-4);
\draw[fill=white] (-3,0) -- (0,3) -- (3,0) -- (0,-3) -- cycle;
\draw (0,3) -- (0,-3);
\draw[fill=white] (-7,-7) rectangle (-6,-3);
\draw[fill=white] (-7,7) rectangle (-6,3);
\draw[fill=white] (7,-7) rectangle (6,-3);
\draw[fill=white] (7,7) rectangle (6,3);
\node at (-7,-7)[left]{$\scriptstyle{s}$};
\node at (-7,7)[left]{$\scriptstyle{t}$};
\node at (7,-7)[right]{$\scriptstyle{s}$};
\node at (7,7)[right]{$\scriptstyle{t}$};
\node at (0,6)[above]{$\scriptstyle{t-d+k_{m}}$};
\node at (0,-6)[below]{$\scriptstyle{s-d+k_{m}}$};
\node at (-3,4)[below left]{$\scriptstyle{d-k_{m}}$};
\node at (-3,-4)[above left]{$\scriptstyle{d-k_{m}}$};
\node at (3,4)[below right]{$\scriptstyle{d-k_{m}}$};
\node at (3,-4)[above right]{$\scriptstyle{d-k_{m}}$};
}\ }\\
&=\sum_{k_{0}\geq k_{1}\geq\cdots\geq k_{m}\geq 0}
q^{-\frac{m}{3}k_{0}(k_{0}+\delta)}
q^{-\frac{1}{2}(k_{0}-k_{m})}q^{-mk_{0}}
q^{\sum_{i=0}^{m-1}k_{i+1}(k_{i+1}+\delta)+k_{i}}\\
&\quad\times 
\frac{(q)_{k_{0}+\delta}}{(q)_{k_{m}+\delta}}\frac{(q)_{k_{0}}}{(q)_{k_{0}-k_{1}}(q)_{k_{1}-k_{2}}\dots (q)_{k_{m-1}-k_{m}}(q)_{k_{m}}}
\mathord{\ \tikz[baseline=-.6ex, scale=.1]{
\draw (-6,5) -- (-9,5);
\draw (-6,-5) -- (-9,-5);
\draw (6,5) -- (9,5);
\draw (6,-5) -- (9,-5);
\draw[->-=.5] (-6,6) -- (6,6);
\draw[->-=.5] (-6,-6) -- (6,-6);
\draw[->-=.5] (-6,4) to[out=east, in=north west] (0,0);
\draw[->-=.5] (-6,-4) to[out=east, in=south west] (0,0);
\draw[->-=.7] (0,0) to[out=north east, in=west] (6,4);
\draw[->-=.7] (0,0) to[out=south east, in=west] (6,-4);
\draw[fill=white] (-3,0) -- (0,3) -- (3,0) -- (0,-3) -- cycle;
\draw (0,3) -- (0,-3);
\draw[fill=white] (-7,-7) rectangle (-6,-3);
\draw[fill=white] (-7,7) rectangle (-6,3);
\draw[fill=white] (7,-7) rectangle (6,-3);
\draw[fill=white] (7,7) rectangle (6,3);
\node at (-7,-7)[left]{$\scriptstyle{s}$};
\node at (-7,7)[left]{$\scriptstyle{t}$};
\node at (7,-7)[right]{$\scriptstyle{s}$};
\node at (7,7)[right]{$\scriptstyle{t}$};
\node at (0,6)[above]{$\scriptstyle{t-d+k_{m}}$};
\node at (0,-6)[below]{$\scriptstyle{s-d+k_{m}}$};
\node at (-3,4)[below left]{$\scriptstyle{d-k_{m}}$};
\node at (-3,-4)[above left]{$\scriptstyle{d-k_{m}}$};
\node at (3,4)[below right]{$\scriptstyle{d-k_{m}}$};
\node at (3,-4)[above right]{$\scriptstyle{d-k_{m}}$};
}\ }\\
&=q^{-\frac{m}{3}k_{0}(k_{0}+\delta)-mk_{0}}\sum_{k_{0}\geq k_{1}\geq\cdots\geq k_{m}\geq 0}
q^{\frac{1}{2}(k_{0}-k_{m})}
q^{\sum_{i=1}^{m}k_{i}(k_{i}+\delta)+k_{i}}\\
&\quad\times 
\frac{(q)_{k_{0}+\delta}}{(q)_{k_{m}+\delta}}\frac{(q)_{k_{0}}}{(q)_{k_{0}-k_{1}}(q)_{k_{1}-k_{2}}\cdots (q)_{k_{m-1}-k_{m}}(q)_{k_{m}}}
\mathord{\ \tikz[baseline=-.6ex, scale=.1]{
\draw (-6,5) -- (-9,5);
\draw (-6,-5) -- (-9,-5);
\draw (6,5) -- (9,5);
\draw (6,-5) -- (9,-5);
\draw[->-=.5] (-6,6) -- (6,6);
\draw[->-=.5] (-6,-6) -- (6,-6);
\draw[->-=.5] (-6,4) to[out=east, in=north west] (0,0);
\draw[->-=.5] (-6,-4) to[out=east, in=south west] (0,0);
\draw[->-=.7] (0,0) to[out=north east, in=west] (6,4);
\draw[->-=.7] (0,0) to[out=south east, in=west] (6,-4);
\draw[fill=white] (-3,0) -- (0,3) -- (3,0) -- (0,-3) -- cycle;
\draw (0,3) -- (0,-3);
\draw[fill=white] (-7,-7) rectangle (-6,-3);
\draw[fill=white] (-7,7) rectangle (-6,3);
\draw[fill=white] (7,-7) rectangle (6,-3);
\draw[fill=white] (7,7) rectangle (6,3);
\node at (-7,-7)[left]{$\scriptstyle{s}$};
\node at (-7,7)[left]{$\scriptstyle{t}$};
\node at (7,-7)[right]{$\scriptstyle{s}$};
\node at (7,7)[right]{$\scriptstyle{t}$};
\node at (0,6)[above]{$\scriptstyle{t-d+k_{m}}$};
\node at (0,-6)[below]{$\scriptstyle{s-d+k_{m}}$};
\node at (-3,4)[below left]{$\scriptstyle{d-k_{m}}$};
\node at (-3,-4)[above left]{$\scriptstyle{d-k_{m}}$};
\node at (3,4)[below right]{$\scriptstyle{d-k_{m}}$};
\node at (3,-4)[above right]{$\scriptstyle{d-k_{m}}$};
}\ }
\end{align*}
\end{proof}

\section{Application to the $\mathfrak{sl}_3$ tail}
In this section, 
we discuss the $\mathfrak{sl}_3$ tail of $(2,2m)$-torus link with one-row colorings.
A tail of a link $L$ is a limit of the $\mathfrak{sl}_2$ colored Jones polynomials $\{J_{n}^{\mathfrak{sl}_2}(L)\}_{n}$. 
The existence of the tail is proven for adequate links in \cite{Armond13}.
The tail of $(2,2m)$ torus link is given by the false theta series and its explicit formula was obtained in \cite{Hajij16}.
The tail or such stability of the $\mathfrak{g}$ colored Jones polynomials of torus knots in a case where $\mathfrak{g}$ is the rank $2$ simple Lie algebra was shown by Garoufalidis and Vuong~\cite{GaroufalidisVuong17}. 
In the same paper, 
They gave explicit formulas of the tail for $(2,2m+1)$- and $(4,5)$-torus knots in the case of $\mathfrak{g}=\mathfrak{sl}_3$.
The author also gave two explicit formulas of the $\mathfrak{sl}_3$ tail for the $(2,2m)$-torus link with anti-parallel orientation in \cite{Yuasa18} and it gave Andrews-Gordon type identities for a ``diagonal summand'' of the $\mathfrak{sl}_3$ theta series, see also Section~9 in \cite{BringmannKaszianMilas19}.

In the following, we give an explicit formula of one-row colored $\mathfrak{sl}_3$ Jones polynomial $J_{(n,0)}^{\mathfrak{sl}_3}(T_{\rightrightarrows}(2,2m))$ for the $(2,2m)$-torus link with parallel orientation and its $\mathfrak{sl}_3$ tail.
We use the following normalization of $\mathfrak{sl}_3$ colored Jones polynomial for an oriented framed link.
Let $D=D_{1}\sqcup D_{2}\sqcup \cdots \sqcup D_{p}$ be a link diagram of a framed link $L$ with $p$ components through the blackboard framing. 
A coloring $\omega_{i}$ of $D_{i}$ is given by an irreducible representations of $\mathfrak{sl}_{3}$ with the highest weight $(m_{i},n_{i})$ for $p=1,2,\dots,p$.
One can identify the coloring $\omega_{i}$ and $(m_{i},n_{i})\in\mathbb{Z}_{{}\geq 0}\times \mathbb{Z}_{{}\geq 0}$.
Let $D(\omega)$ be the $A_2$ web obtained by replacing $D_{i}$ with $JW_{\omega_{i}}$ where the orientation of the strand colored by $m$ coincides with the orientation of $D_{i}$ for all $i=1,2,\dots,p$. 
See Figure~{\ref{CJP}}.

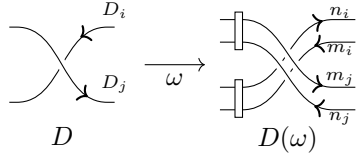
\begin{figure}
\begin{tikzpicture}[scale=.1]
\draw[-<-=.8, white, double=black, double distance=0.4pt, ultra thick] (-7,-5) -- (-6,-5) to[out=east, in=west] (6,5) -- (7,5);
\draw[->-=.8, white, double=black, double distance=0.4pt, ultra thick] (-7,5) -- (-6,5) to[out=east, in=west] (6,-5) -- (7,-5);
\node at (7,-5) [above]{$\scriptstyle{D_{j}}$};
\node at (7,5) [above]{$\scriptstyle{D_{i}}$};
\node at (0,-7) [below]{$D$};
\begin{scope}[xshift=15cm]
\draw[->] (-4,0) -- (4,0);
\node at (0,0) [below]{$\omega$};
\end{scope}
\begin{scope}[xshift=30cm]
\draw[-<-=.8, white, double=black, double distance=0.4pt, ultra thick] (-9,-6) -- (-6,-6) to[out=east, in=west] (6,3) -- (9,3);
\draw[->-=.8, white, double=black, double distance=0.4pt, ultra thick] (-9,-3) -- (-6,-3) to[out=east, in=west] (6,6) -- (9,6);
\draw[->-=.8, white, double=black, double distance=0.4pt, ultra thick] (-9,6) -- (-6,6) to[out=east, in=west] (6,-3) -- (9,-3);
\draw[-<-=.8, white, double=black, double distance=0.4pt, ultra thick] (-9,3) -- (-6,3) to[out=east, in=west] (6,-6) -- (9,-6);
\draw[fill=white] (-7,7) rectangle (-6,2);
\draw[fill=white] (-7,-7) rectangle (-6,-2);
\node at (7,5) [above]{$\scriptstyle{n_{i}}$};
\node at (7,4) [below]{$\scriptstyle{m_{i}}$};
\node at (7,-5) [below]{$\scriptstyle{n_{j}}$};
\node at (7,-4) [above]{$\scriptstyle{m_{j}}$};
\node at (0,-7) [below]{$D(\omega)$};
\end{scope}
\end{tikzpicture}
\caption{The construction of $A_2$ web from a link diagram $D$ and a coloring $\omega$.}
\label{CJP}
\end{figure}

\begin{DEF}
$J_{\omega}^{\mathfrak{sl}_3}(L)=D(\omega)\in\mathbb{C}(q^{\frac{1}{6}})$.
\end{DEF}

\begin{RMK}
The above normalization of the $\mathfrak{sl}_3$ colored Jones polynomial of a link is different from in \cite{Yuasa18}.
\end{RMK}

\begin{LEM}\label{closure}
Let $s$ and $t$ be positive integers and $d=\min\{s,t\}$.
For any integer $0\leq k\leq d$,
\[
\mathord{\ \tikz[baseline=-.6ex, scale=.1]{
\draw[rounded corners, -<-=.5] (-6,5) -- (-10,5) -- (-10,9) -- (10,9) -- (10,5) -- (6,5);
\draw[rounded corners, -<-=.5] (-6,-5) -- (-12,-5) -- (-12,11) -- (12,11) -- (12,-5) -- (6,-5);
\draw[->-=.5] (-6,6) -- (6,6);
\draw[->-=.5] (-6,-6) -- (6,-6);
\draw[->-=.5] (-6,4) to[out=east, in=north west] (0,0);
\draw[->-=.5] (-6,-4) to[out=east, in=south west] (0,0);
\draw[->-=.7] (0,0) to[out=north east, in=west] (6,4);
\draw[->-=.7] (0,0) to[out=south east, in=west] (6,-4);
\draw[fill=white] (-3,0) -- (0,3) -- (3,0) -- (0,-3) -- cycle;
\draw (0,3) -- (0,-3);
\draw[fill=white] (-7,-7) rectangle (-6,-3);
\draw[fill=white] (-7,7) rectangle (-6,3);
\draw[fill=white] (7,-7) rectangle (6,-3);
\draw[fill=white] (7,7) rectangle (6,3);
\node at (-10,7)[right]{$\scriptstyle{t}$};
\node at (-12,7)[left]{$\scriptstyle{s}$};
\node at (-3,4)[below left]{$\scriptstyle{d-k}$};
\node at (-3,-4)[above left]{$\scriptstyle{d-k}$};
\node at (3,4)[below right]{$\scriptstyle{d-k}$};
\node at (3,-4)[above right]{$\scriptstyle{d-k}$};
}\ }
=[d-k+1]\frac{\Delta(s,0)\Delta(t,0)}{\Delta(d-k,0)}
\]
where $\Delta$ is an oriented circle.
\end{LEM}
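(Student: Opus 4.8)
The plan is to evaluate the left-hand side as a closed $A_2$ network and reduce it by collapsing bubbles. Write $p=d-k$. First I would read off the topology of the closed diagram: the two outer arcs turn the one-row clasped $t$- and $s$-strands into two loops, the middle basis web splits the $t$-loop into a straight $(t-p,0)$-edge together with a $(p,0)$-edge running into the central diamond (and symmetrically for the $s$-loop), and the diamond joins the four $(p,0)$-legs through an internal edge colored $(0,p)$. Thus the whole picture is a pair of ``clasp bubbles'' — one on the top $(p,0)$-legs, one on the bottom $(p,0)$-legs — attached to a central theta-type graph.

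Next I would collapse each bubble. The part consisting of the $(t,0)$-clasp loop, the straight $(t-p,0)$-edge and the two $(p,0)$-legs is a bigon whose two sides carry colors $(t,0)$ and $(t-p,0)$ and whose two external legs carry color $(p,0)$. Using the clasp-absorption of the first identity in Lemma~\ref{claspformula} together with the bubble relation, this collapses to a single $(p,0)$-edge times the scalar $\Delta(t,0)/\Delta(d-k,0)$; the point is that the trivalent ``vertex'' $(t,0)=(t-p,0)+(p,0)$ is only a clasp grouping, so the associated theta network reconstitutes the full $(t,0)$-clasp loop and equals $\Delta(t,0)$. The same move on the bottom yields $\Delta(s,0)/\Delta(d-k,0)$. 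After both collapses the diamond's upper legs are joined by one $(p,0)$-edge and its lower legs by another, leaving the one-row $A_2$ theta graph $\theta\big((p,0),(p,0),(0,p)\big)$ with internal color $(0,p)$, which evaluates to $[d-k+1]\,\Delta(d-k,0)$. Multiplying the three factors gives
\[
\frac{\Delta(t,0)}{\Delta(d-k,0)}\cdot\frac{\Delta(s,0)}{\Delta(d-k,0)}\cdot[d-k+1]\,\Delta(d-k,0)=[d-k+1]\,\frac{\Delta(s,0)\Delta(t,0)}{\Delta(d-k,0)},
\]
which is the claim. As a sanity check, when $k=d$ one has $p=0$: the diamond disappears, the diagram becomes two unlinked clasped loops, and the formula collapses to $\Delta(s,0)\Delta(t,0)$, matching $[1]\,\Delta(s,0)\Delta(t,0)/\Delta(0,0)$.

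The main obstacle is the two sub-evaluations hidden in the middle step, not the topological reduction. For the bubble collapse one must check, with the correct $A_2$ clasp normalizations, that peeling a $(p,0)$-subbundle off a $(t,0)$-clasp and rejoining it contributes exactly $\Delta(t,0)/\Delta(d-k,0)$ with no spurious powers of $q$; the remark before Proposition~\ref{paralleleq} that the intermediate clasps are removable is precisely what makes this clean. The genuinely nontrivial input is the value of the theta graph carrying the dual color $(0,p)$ on its internal edge, and I would establish $\theta\big((p,0),(p,0),(0,p)\big)=[d-k+1]\,\Delta(d-k,0)$ either from the standard $A_2$ theta evaluations or by an induction on $p$ driven by the defining recursion of the one-row clasp in Definition~\ref{sclasp}.
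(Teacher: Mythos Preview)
Your approach is correct and is essentially the same as the paper's: the paper simply records the two ingredients you use---the partial trace identity $\Delta(n,0)/\Delta(n-k,0)$ for closing off $k$ strands of an $n$-clasp, and the open theta evaluation $[n+1]$ for the pair of triangle webs---and leaves the multiplication to the reader, citing \cite{Yuasa17}. Your two bubble collapses are exactly the partial trace formula applied with $(n,k)=(t,t-p)$ and $(n,k)=(s,s-p)$, and your closed theta value $[p+1]\,\Delta(p,0)$ is the closure of the paper's first displayed identity; so the only difference is that you spell out the order of the reductions rather than just quoting the two formulas.
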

\begin{proof}
One can calculate the above web by using the following formulas (see, for example, \cite{Yuasa17}).
\begin{align*}
\mathord{\ \tikz[baseline=-.6ex, scale=.1]{
\draw[->-=.3] (-12,0) -- (-5,0);
\draw[-<-=.3] (12,0) -- (5,0);
\draw[-<-=.7] (-5,0) to[out=north, in=west] (0,4);
\draw[->-=.7] (5,0) to[out=north, in=east] (0,4);
\draw[-<-=.7] (-5,0) to[out=south, in=west] (0,-4);
\draw[->-=.7] (5,0) to[out=south, in=east] (0,-4);
\draw[fill=white] (0,2) rectangle (1,6);
\draw[fill=white] (0,-2) rectangle (1,-6);
\draw[fill=white, xshift=-5cm] (0:2) -- (120:2) -- (240:2) -- cycle;
\draw[fill=white, xshift=5cm] (60:2) -- (180:2) -- (300:2) -- cycle;
\draw[fill=white] (-8,-2) rectangle (-7,2);
\draw[fill=white] (8,-2) rectangle (7,2);
\node at (-2,3)[above]{$\scriptstyle{n}$};
\node at (-2,-3)[below]{$\scriptstyle{n}$};
\node at (-8,0)[above left]{$\scriptstyle{n}$};
\node at (8,0)[above right]{$\scriptstyle{n}$};
}\ }
&=
\left[n+1\right]
\mathord{\ \tikz[baseline=-.6ex,scale=.1]{
\draw[->-=.8] (-5,0) -- (5,0);
\draw[fill=white] (0,-2) rectangle (1,2);
\node at (-2,0)[above]{$\scriptstyle{n}$};
}\ },\\
\mathord{\ \tikz[baseline=-.6ex, scale=.1]{
\draw[-<-=.2] (-5,1) -- (5,1);
\draw[-<-=.3, rounded corners] (-3,-1) -- (-3,-5) -- (3,-5) -- (3,-1) -- cycle;
\draw[fill=white] (0,-3) rectangle (1,3);
\node at (-5,1)[above]{$\scriptstyle{n-k}$};
\node at (0,-5)[below]{$\scriptstyle{k}$};
}\ }
=\frac{\Delta(n,0)}{\Delta(n-k,0)}
\mathord{\ \tikz[baseline=-.6ex, scale=.1]{
\draw[-<-=.3] (-4,0) -- (4,0);
\draw[fill=white] (-1,-2) rectangle (0,2);
\node at (4,0)[above]{$\scriptstyle{n-k}$};
}\ }
&=\frac{\left[n+1\right]\left[n+2\right]}{\left[n-k+1\right]\left[n-k+2\right]}
\mathord{\ \tikz[baseline=-.6ex, scale=.1]{
\draw[-<-=.3] (-4,0) -- (4,0);
\draw[fill=white] (-1,-2) rectangle (0,2);
\node at (4,0)[above]{$\scriptstyle{n-k}$};
}\ }
\end{align*}
\end{proof}

\begin{THM}\label{paralleltorus}
Let 
$T_{\rightrightarrows}(2,2m)=
\mathord{\ \tikz[baseline=-.6ex, scale=.1, yshift=-2cm]{
\draw[->-=.7, rounded corners] (-5,2) -- (5,2) -- (5,5) -- (-5,5) -- cycle;
\draw[->-=.7, rounded corners] (-8,-2) -- (8,-2) -- (8,8) -- (-8,8) -- cycle;
\draw[fill=white] (-3,-3) rectangle (3,3);
\node at (0,0){$\scriptstyle{2m}$};
}\ }
$
be the $(2,2m)$-torus link with parallel orientation. 
For any one-row colorings $\omega_1=(s,0)$ and $\omega_2=(t,0)$,
\begin{align*}
&J_{\omega}^{\mathfrak{sl}_3}(T(2,2m))=q^{-\frac{m}{3}d(d+\delta)-md}\\
&\quad\times\hspace{-1em}\sum_{d\geq k_{1}\geq\cdots\geq k_{m}\geq 0}\hspace{-1em}
q^{\sum_{i=1}^{m}k_{i}(k_{i}+\delta)+k_{i}}q^{d-k_{m}}
\frac{(q)_{d+\delta}}{(q)_{k_{m}+\delta}}\frac{(q)_{d}}{(q)_{d-k_{1}}(q)_{k_{1}-k_{2}}\cdots (q)_{k_{m-1}-k_{m}}(q)_{k_{m}}}\\
&\quad\times\frac{1-q^{2}}{1-q^{d-k_{m}+1}}\Delta(s,0)\Delta(t,0)
\end{align*}
where $d=\min\{s,t\}$ and $\delta=\left| s-t \right|$.
\end{THM}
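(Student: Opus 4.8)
The strategy is to recognize $J_\omega^{\mathfrak{sl}_3}(T_{\rightrightarrows}(2,2m))$ as the closure of a parallel $m$-full twist and then evaluate that closure termwise using the two main results of this section. First I would observe that, by the definition of $J_\omega^{\mathfrak{sl}_3}$, the invariant is the evaluation of the closed $A_2$ web obtained from the standard blackboard diagram of $T(2,2m)$ by replacing its two components with the one-row clasps $JW_{(s,0)}$ and $JW_{(t,0)}$. The $2m$-crossing box in the diagram of $T_{\rightrightarrows}(2,2m)$ is exactly $m$ parallel full twists on the two like-oriented strands, so this closed web is the braid closure (each strand closed to itself) of the parallel $m$-full twist colored by $s$ and $t$.

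Next I would apply Theorem~\ref{parallelmfull} to expand the $m$-full twist as $q^{-\frac{m}{3}d(d+\delta)-md}\sum_{d\geq k_1\geq\cdots\geq k_m\geq 0}D(\underline{k})\,B_{k_m}$, where $B_{k_m}$ denotes the basis web with the central diamond and four arms colored $d-k_m$ (and horizontal edges colored $t-d+k_m$ and $s-d+k_m$). Since the closure map is $\mathbb{C}(q^{1/6})$-linear, it suffices to close each $B_{k_m}$ separately. The closed web coming from $B_{k_m}$ is precisely the diagram on the left-hand side of Lemma~\ref{closure} with $k=k_m$, so its value is $[d-k_m+1]\,\Delta(s,0)\Delta(t,0)/\Delta(d-k_m,0)$.

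It then remains to reconcile the two normalizations. Using the one-row quantum dimension $\Delta(n,0)=[n+1][n+2]/[2]$ one finds $[d-k_m+1]/\Delta(d-k_m,0)=[2]/[d-k_m+2]$; rewriting the balanced quantum integers as ratios of $1-q^{\bullet}$ and absorbing the half-integer power $q^{\frac12(d-k_m)}$ carried by $D(\underline{k})$ produces a factor of the form $q^{d-k_m}\frac{1-q^2}{1-q^{d-k_m+1}}$ multiplying $\Delta(s,0)\Delta(t,0)$. Finally I would expand the $q$-multinomial coefficient $\binom{d}{k_1',\dots,k_m',k_m}_q$ appearing in $D(\underline{k})$ as the product of Pochhammer symbols $(q)_d/\big((q)_{d-k_1}(q)_{k_1-k_2}\cdots(q)_{k_{m-1}-k_m}(q)_{k_m}\big)$ and collect the common prefactor $q^{-\frac{m}{3}d(d+\delta)-md}$, which yields the claimed formula.

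The conceptual content is a direct combination of Theorem~\ref{parallelmfull} and Lemma~\ref{closure}, so the main obstacle is administrative rather than structural. One must check that the braid closure introduces no framing (Reidemeister~I) correction: every crossing of $T(2,2m)$ is between the two distinct components, so each component has zero self-writhe and the blackboard-framed evaluation needs no twist factor. One must also track the half-integer powers of $q$ carefully when passing between the balanced quantum integers $[n]$ used in Lemma~\ref{closure} and the unbalanced $q$-Pochhammer normalization of the final statement. Keeping this bookkeeping honest, together with the elementary rewriting of the multinomial coefficient, is where the only real care is required.
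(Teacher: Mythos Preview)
Your proposal is correct and follows exactly the paper's own argument: the proof in the paper consists of the single sentence that the formula follows from Theorem~\ref{parallelmfull}, Lemma~\ref{closure}, and the identity $\Delta(n,0)=q^{-n}\frac{(1-q^{n+1})(1-q^{n+2})}{(1-q)(1-q^{2})}$. Your write-up is in fact more detailed than the paper's, spelling out the linearity of the closure, the framing check, and the conversion between balanced quantum integers and $q$-Pochhammer symbols.
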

\begin{proof}
One can easily obtain the above formula from Theorem~{\ref{parallelmfull}}, Lemma~{\ref{closure}}, and
\[
\Delta(n,0)=\frac{\left[n+1\right]\left[n+2\right]}{\left[2\right]}=q^{-n}\frac{(1-q^{n+1})(1-q^{n+2})}{(1-q)(1-q^{2})}.
\]
\end{proof}

Let us consider a family of formal power series $\{f_{n}(q)\in\mathbb{Z}[[q]]\mid n\in\mathbb{Z}_{{}\geq 0}\}$ and $F(q)\in\mathbb{Z}[[q]]$.
Then, we define $\lim_{n\to\infty}f_n(q)=F(q)$ if $f_n(q)=F(q)$ in $\mathbb{Z}[[q]]/q^{n+1}\mathbb{Z}[[q]]$ for any $n$.

We review an explicit formula of the $\mathfrak{sl}_3$ tail of the $(2,2m)$-torus link with anti-parallel orientation
$T_{\leftrightarrows}(2,2m)=
\mathord{\ \tikz[baseline=-.6ex, scale=.1, yshift=-2cm]{
\draw[-<-=.7, rounded corners] (-5,2) -- (5,2) -- (5,5) -- (-5,5) -- cycle;
\draw[->-=.7, rounded corners] (-8,-2) -- (8,-2) -- (8,8) -- (-8,8) -- cycle;
\draw[fill=white] (-3,-3) rectangle (3,3);
\node at (0,0){$\scriptstyle{2m}$};
}\ }
$.
\begin{THM}[The $\mathfrak{sl}_3$ colored Jones polynomial of $T_{\leftrightarrows}(2,2m)$~\cite{Yuasa17}]\label{antiparalleltorus}
For any one-row colorings $\omega_1=(s,0)$ and $\omega_2=(t,0)$,
\begin{align*}
&J_{\omega}^{\mathfrak{sl}_3}(T_{\leftrightarrows}(2,2m))=q^{-\frac{2m}{3}d(d+\delta)-2md}\\
&\quad\times\hspace{-1em}\sum_{d\geq k_{1}\geq\cdots\geq k_{m}\geq 0}\hspace{-1em}
q^{\sum_{i=1}^{m}k_{i}(k_{i}+\delta)+2k_{i}}q^{2d-2k_{m}}
\frac{(q)_{d+\delta}}{(q)_{k_{m}+\delta}}\frac{(q)_{d}}{(q)_{d-k_{1}}(q)_{k_{1}-k_{2}}\cdots (q)_{k_{m-1}-k_{m}}(q)_{k_{m}}}\\
&\quad\times\frac{(1-q)(1-q^{2})}{(1-q^{d-k_{m}+1})(1-q^{d-k_{m}+2})}\Delta(s,0)\Delta(t,0)
\end{align*}
where $d=\min\{s,t\}$ and $\delta=\left| s-t \right|$.
\end{THM}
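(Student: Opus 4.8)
The plan is to reproduce the argument for Theorem~\ref{paralleltorus} verbatim, substituting the anti-parallel ingredients for the parallel ones. First I would present $T_{\leftrightarrows}(2,2m)$, colored by $\omega_1=(s,0)$ and $\omega_2=(t,0)$, as the closure of the anti-parallel $m$-full twist tangle on two strands carrying the one-row $A_2$ clasps $\JW{s}{0}$ and $\JW{t}{0}$; by definition $J_{\omega}^{\mathfrak{sl}_3}(T_{\leftrightarrows}(2,2m))$ is the scalar obtained by evaluating this closed web.

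Next I would apply the anti-parallel $m$-full twist formula (Theorem~\ref{antimfull}) inside the twist region. This rewrites the tangle as $q^{-\frac{2m}{3}d(d+\delta)-2md}\sum_{d\ge k_{1}\ge\cdots\ge k_{m}\ge 0}C(\underline{k})$ times the anti-parallel turnback web whose horizontal strands are colored $k_{m}+(t-d)$ and $k_{m}+(s-d)$ and whose left and right caps are colored $d-k_{m}$. The coefficient $C(\underline{k})$ already carries the prefactor $q^{-\frac{2m}{3}d(d+\delta)-2md}$, the factor $q^{\sum_{i=1}^{m}k_{i}(k_{i}+\delta)+2k_{i}}q^{d-k_{m}}$, the ratio $(q)_{d+\delta}/(q)_{k_{m}+\delta}$, and the $q$-multinomial $\binom{d}{k_{1}',\dots,k_{m}',k_{m}}_{q}=(q)_{d}\big/\big((q)_{d-k_{1}}(q)_{k_{1}-k_{2}}\cdots(q)_{k_{m-1}-k_{m}}(q)_{k_{m}}\big)$, so after this step the only quantity left to compute is the closure of each turnback web.

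The third and central step is the anti-parallel analogue of Lemma~\ref{closure}: I would show that closing up the anti-parallel turnback web with caps colored $j:=d-k_{m}$ yields $\Delta(s,0)\Delta(t,0)/\Delta(j,0)$, \emph{without} any extra quantum-integer factor. This is precisely where the computation genuinely diverges from the parallel case, in which the internal tetravalent vertex forces the additional $[d-k+1]$ of Lemma~\ref{closure}; the anti-parallel turnback has only the two caps and no such vertex. The evaluation follows from the same two reductions used in the proof of Lemma~\ref{closure} — absorbing each cap colored $j$ into the neighbouring clasp and then removing the resulting bubbles via the capping identity $\Delta(n,0)/\Delta(n-k,0)=[n+1][n+2]/([n-k+1][n-k+2])$ — applied to the two circles created by the closure. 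I expect this bookkeeping to be the main obstacle, since one must verify that the two $j$-colored caps reduce the colors from $(s,0)$ and $(t,0)$ down to $(j,0)$ cleanly and that no spurious factor survives.

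Finally I would substitute $\Delta(n,0)=q^{-n}(1-q^{n+1})(1-q^{n+2})\big/\big((1-q)(1-q^{2})\big)$ into $\Delta(s,0)\Delta(t,0)/\Delta(j,0)$. The factor $q^{d-k_{m}}$ arising from $\Delta(d-k_{m},0)^{-1}$ combines with the $q^{d-k_{m}}$ already carried by $C(\underline{k})$ to produce the $q^{2d-2k_{m}}$ of the statement, while $\Delta(d-k_{m},0)^{-1}$ also supplies exactly $(1-q)(1-q^{2})\big/\big((1-q^{d-k_{m}+1})(1-q^{d-k_{m}+2})\big)$. Collecting these with the terms furnished by $C(\underline{k})$ reproduces the asserted formula; since this identity is also recorded in \cite{Yuasa17} (up to the normalization change noted above), this last step is purely routine simplification.
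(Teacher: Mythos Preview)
Your proposal is correct and mirrors exactly the template the paper uses for the parallel analogue (Theorem~\ref{paralleltorus}): apply the $m$-full twist formula, evaluate the closure of each basis web, and substitute the closed formula for $\Delta(n,0)$. Note that the paper does not give a proof of this statement at all---it is quoted from \cite{Yuasa17}---so you are in effect reconstructing the cited argument with the tools available here, which is perfectly appropriate.

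One small clarification on your Step~3: in the anti-parallel closure it is not the caps that turn into bubbles. After merging the two $t$-clasps along the closure arc (and likewise for $s$), it is the \emph{horizontal} bundle of $k_m+(t-d)$ strands that becomes a loop through the single $t$-clasp; the partial-trace identity from the proof of Lemma~\ref{closure} then yields the factor $\Delta(t,0)/\Delta(d-k_m,0)$ and leaves a $(d-k_m)$-clasp, and similarly $\Delta(s,0)/\Delta(d-k_m,0)$ on the $s$ side. The two remaining $(d-k_m)$-clasps are now joined on both sides by the original caps, so they collapse by idempotency to a single $(d-k_m)$-circle worth $\Delta(d-k_m,0)$. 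Multiplying gives precisely your claimed value $\Delta(s,0)\Delta(t,0)/\Delta(d-k_m,0)$, and the rest of your algebra is correct.
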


\begin{THM}[The $\mathfrak{sl}_3$ tail of $T_{\leftrightarrows}(2,2m)$~\cite{Yuasa18}]\label{antiparalleltail}
Let $\omega_{n}$ be a special coloring of $T_{\leftrightarrows}(2,2m)$ such that $s=t=n$.
Then,
\[
\lim_{n\to\infty}f_n(q)=\frac{1}{(1-q)(1-q^{2})}\sum_{k_{1}\geq k_{2}\geq\cdots\geq k_{m}\geq 0}
\frac{q^{-2k_{m}}q^{\sum_{i=1}^{m}k_{i}^2+2k_{i}}}{(q)_{k_{1}-k_{2}}(q)_{k_{2}-k_{3}}\cdots (q)_{k_{m-1}-k_{m}}(q)_{k_{m}}}
\]
where $f_{n}(q)=q^{\frac{2m}{3}n^2+2mn}J_{\omega_{n}}^{\mathfrak{sl}_3}(T_{\leftrightarrows}(2,2m))$.
\end{THM}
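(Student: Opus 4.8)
The plan is to obtain Theorem~\ref{antiparalleltail} by specializing the closed formula of Theorem~\ref{antiparalleltorus} to the diagonal coloring $s=t=n$ and then extracting the $q$-adic limit summand by summand. Putting $s=t=n$ gives $d=n$ and $\delta=0$, so the prefactor $q^{-\frac{2m}{3}n^{2}-2md}$ of Theorem~\ref{antiparalleltorus} is cancelled exactly by the normalizing factor $q^{\frac{2m}{3}n^{2}+2mn}$ in the definition of $f_{n}$. Substituting $\Delta(n,0)=q^{-n}(1-q^{n+1})(1-q^{n+2})/((1-q)(1-q^{2}))$ and collecting powers of $q$, each summand of $f_{n}(q)$ becomes a ratio of $q$-Pochhammer symbols times the monomial $q^{\sum_{i}k_{i}^{2}+2k_{i}-2k_{m}}$, with all $n$-dependence confined to the factors $(q)_{n}$, $(q)_{n-k_{1}}$, $1-q^{n+1}$, $1-q^{n+2}$, $1-q^{n-k_{m}+1}$ and $1-q^{n-k_{m}+2}$.

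First I would establish $q$-adic stabilization. The exponent of the monomial equals $\sum_{i=1}^{m}k_{i}^{2}+2\sum_{i=1}^{m-1}k_{i}\geq k_{1}^{2}$, and every remaining factor is a power series with constant term $1$; hence the $q$-order of the $(k_{1},\dots,k_{m})$-summand is at least $k_{1}^{2}$. Consequently the coefficient of a fixed $q^{N}$ receives contributions only from the finitely many tuples with $k_{1}\leq\sqrt{N}$, and this bound is independent of $n$. For such a tuple and for $n$ large relative to $N$ one has $n-k_{1}>N$ and $n-k_{m}>N$, so modulo $q^{N+1}$ the factors $1-q^{n+j}$ and $1-q^{n-k_{m}+j}$ are $\equiv 1$, while $(q)_{n}\equiv(q)_{\infty}$ and $(q)_{n-k_{1}}\equiv(q)_{\infty}$. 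This is precisely what justifies replacing each $n$-dependent factor by its $n\to\infty$ value and interchanging the limit with the (effectively finite) summation, which is the content of $\lim_{n\to\infty}f_{n}=F$ in $\mathbb{Z}[[q]]/q^{n+1}\mathbb{Z}[[q]]$.

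Finally I would carry out the termwise limit and reduce to the stated form. After the replacements the surviving $n$-dependent product collapses to a single factor coming from $(q)_{n}^{2}/(q)_{n-k_{1}}$ together with the reciprocal $\Delta$-factors, and the leftover $1/((1-q)(1-q^{2}))$ matches the prefactor in the claim. Bringing the remaining sum of $q$-Pochhammer ratios weighted by $q^{\sum k_{i}^{2}+2k_{i}-2k_{m}}$ into the advertised single-denominator form is the main obstacle: it requires a Cauchy/Durfee-square type identity (in the base case $m=1$ the relevant input is $\sum_{k\geq0}q^{k^{2}}/(q)_{k}^{2}=1/(q)_{\infty}$, and for general $m$ an iterated version telescoping the $k_{m},k_{m-1},\dots$ summations), and it must be combined with the fact that the normalization of $J^{\mathfrak{sl}_3}$ used here differs from that of \cite{Yuasa18} (cf.\ the remark after Theorem~\ref{antiparalleltorus}). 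Reconciling the two normalizations is exactly what fixes the overall power of $q$ and the single factor $(q)_{k_{m}}$ in the denominator. The entire argument runs on the same template as the parallel computation built on Theorem~\ref{paralleltorus}, so once that case is carried out the anti-parallel statement follows verbatim with the weights of Theorem~\ref{antiparalleltorus}.
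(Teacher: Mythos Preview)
This theorem is not proved in the present paper; it is quoted from \cite{Yuasa18}. The paper does, however, prove the parallel analogue (the theorem immediately following) by exactly the same template, and that proof is two lines: one checks that
\[
q^{k_1^2+k_1}\frac{(q)_n}{(q)_{n-k_1}}\equiv q^{k_1^2+k_1}
\quad\text{and}\quad
q^{k_m^2}\frac{(q)_n}{(q)_{k_m}}\equiv q^{k_m^2}
\quad\text{in }\mathbb{Z}[[q]]/q^{n+1}\mathbb{Z}[[q]],
\]
and the remaining $n$-dependent factors are trivially $\equiv 1$. Your first two paragraphs set this up correctly and match that template.

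Your third paragraph, however, goes off course. You take the termwise limit in the ordinary $q$-adic sense, sending $(q)_n\to(q)_\infty$; this leaves an extra factor $(q)_\infty/(q)_{k_m}$ and hence a denominator $(q)_{k_m}^2$, which you then try to repair with a Durfee-square identity and with the remark about different normalizations. Neither device is what the paper uses. Under the paper's definition of limit (agreement modulo $q^{n+1}$), the factor $\frac{(q)_n}{(q)_{k_m}}$, once weighted by the monomial already present in the summand, is replaced by $1$ --- not by $(q)_\infty/(q)_{k_m}$ --- so the claimed tail with a single $(q)_{k_m}$ in the denominator drops out directly, with no further identity. In fact the Durfee identity you invoke, $\sum_{k\ge 0}q^{k^2}/(q)_k^2=1/(q)_\infty$, would collapse the $m=1$ sum to a constant, which is \emph{not} the stated tail; and the normalization remark in the paper concerns only the overall power of $q$, not the structure of the summation. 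The gap, then, is that your termwise-limit step uses the wrong replacement; if you instead argue via the two mod-$q^{n+1}$ congruences, as the paper does for the parallel case, the proof is complete without any auxiliary identity.
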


We obtain an explicit formula of the $\mathfrak{sl}_3$ tail of $T_{\rightrightarrows}(2,2m)$ from Theorem~\ref{paralleltorus}
\begin{THM}[The $\mathfrak{sl}_3$ tail of $T_{\rightrightarrows}(2,2m)$]
Let $\omega_{n}$ be a special coloring of $T_{\rightrightarrows}(2,2m)$ such that $s=t=n$. 
Then,
\[
 \lim_{n\to\infty}f_n(q)=\frac{1}{(1-q)^{2}(1-q^{2})}\sum_{k_{1}\geq k_{2}\geq\cdots\geq k_{m}\geq 0}
\frac{q^{-k_{m}}q^{\sum_{i=1}^{m}k_{i}^2+k_{i}}}{(q)_{k_{1}-k_{2}}(q)_{k_{2}-k_{3}}\cdots (q)_{k_{m-1}-k_{m}}(q)_{k_{m}}}
\]
where $f_n(q)=q^{\frac{m}{3}n^2+mn}q^{n}J_{\omega_{n}}^{\mathfrak{sl}_3}(T_{\rightrightarrows}(2,2m))$.
\end{THM}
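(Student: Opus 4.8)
The plan is to read off the tail directly from the closed-form $\mathfrak{sl}_3$ colored Jones polynomial of Theorem~\ref{paralleltorus}, exactly as in the anti-parallel case of Theorem~\ref{antiparalleltail}, by specializing to the diagonal coloring $s=t=n$ and then letting $n\to\infty$ in the $q$-adic topology of $\mathbb{Z}[[q]]$. First I would set $s=t=n$, so that $d=\min\{s,t\}=n$ and $\delta=\lvert s-t\rvert=0$. The summand of Theorem~\ref{paralleltorus} then simplifies: the quadratic weight collapses to $\sum_{i=1}^{m}k_i(k_i+\delta)+k_i=\sum_{i=1}^{m}(k_i^2+k_i)$, the two quantum dimensions become $\Delta(s,0)\Delta(t,0)=\Delta(n,0)^2$, and the prefactor becomes $q^{-\frac{m}{3}n^2-mn}$. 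Substituting the evaluation $\Delta(n,0)=q^{-n}\frac{(1-q^{n+1})(1-q^{n+2})}{(1-q)(1-q^2)}$ recorded in the proof of Theorem~\ref{paralleltorus} turns $J_{\omega_n}^{\mathfrak{sl}_3}(T_{\rightrightarrows}(2,2m))$ into an explicit finite sum over $n\ge k_1\ge\cdots\ge k_m\ge 0$ whose $n$-dependence is concentrated in the scalar prefactor, in the $q$-Pochhammer symbols $(q)_n$ and $(q)_{n-k_1}$, in the factors $(1-q^{n+1})$, $(1-q^{n+2})$, $(1-q^{n-k_m+1})$, and in the upper summation bound $k_1\le n$.

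Next I would multiply by the normalizing factor $q^{\frac{m}{3}n^2+mn+n}$ defining $f_n(q)$ and verify that the genuinely $n$-dependent powers of $q$ cancel. The quadratic-and-linear prefactors $q^{\pm(\frac{m}{3}n^2+mn)}$ annihilate each other, while the explicit monomials $q^{n}$, $q^{\,d-k_m}$ (from the closure of Lemma~\ref{closure}) and $q^{-2n}$ (from $\Delta(n,0)^2$) combine into the clean weight $q^{-k_m}$, and the rational factor $\frac{1-q^2}{(1-q)^2(1-q^2)^2}$ collapses to $\frac{1}{(1-q)^2(1-q^2)}$. After this bookkeeping $f_n(q)$ is a sum of $q^{-k_m}q^{\sum_i(k_i^2+k_i)}$ against a product of $q$-Pochhammer symbols, the arguments of some of which still involve $n$.

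The final and decisive step is the limit $n\to\infty$, which I would justify by the tail-stability argument already used for Theorem~\ref{antiparalleltail}. Coefficientwise in $\mathbb{Z}[[q]]$ one has $(q)_n\to(q)_\infty$, $1-q^{n+j}\to 1$ and $(1-q^{n-k_m+1})^{-1}\to 1$, and the Gaussian multinomial $\frac{(q)_n}{(q)_{n-k_1}(q)_{k_1-k_2}\cdots(q)_{k_{m-1}-k_m}(q)_{k_m}}$ stabilizes (equivalently $\binom{n}{k_1}_{q}\to 1/(q)_{k_1}$), so each fixed summand converges to its stated value; at the same time the constraint $k_1\le n$ becomes vacuous because the Gaussian weight $q^{\sum_i k_i^2}$ forces every tuple with $k_1>\sqrt{n}$ to have $q$-order exceeding $n$. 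The hard part is exactly this interchange of limit and summation: I must show that for each fixed exponent $N$ only finitely many tuples $\underline{k}$ contribute to the coefficient of $q^{N}$ and that truncating at $k_1=n$ alters no coefficient below $q^{n+1}$, so that $f_n(q)\equiv F(q)\bmod q^{n+1}$ for the claimed series $F(q)$. Granting the same convention and dominated-convergence reasoning that established the anti-parallel tail, reassembling the stable summand with the surviving constant $\frac{1}{(1-q)^2(1-q^2)}$ produces the asserted formula.
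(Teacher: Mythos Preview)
Your proposal is correct and follows essentially the same route as the paper: specialize Theorem~\ref{paralleltorus} to $s=t=n$, multiply by the normalizing monomial, and pass to the $q$-adic limit by letting the $n$-dependent Pochhammer factors stabilize modulo $q^{n+1}$. The paper's proof is extremely terse, recording only the two congruences $q^{k_1^2+k_1}\tfrac{(q)_n}{(q)_{n-k_1}}\equiv q^{k_1^2+k_1}$ and $q^{k_m^2}\tfrac{(q)_n}{(q)_{k_m}}\equiv q^{k_m^2}$ in $\mathbb{Z}[[q]]/q^{n+1}$, whereas you spell out the cancellation of the $q$-monomials and the justification for interchanging the limit with the sum; but the underlying argument is the same.
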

\begin{proof}
It is easy to see that $q^{k_{1}^2+k_{1}}\frac{(q)_{n}}{(q)_{n-k_{1}}}=q^{k_1^2+k_1}\in\mathbb{Z}[[q]]/q^{n+1}\mathbb{Z}[[q]]$ and $q^{k_{m}^2}\frac{(q)_{n}}{(q)_{k_{m}}}=q^{k_{m}^2}\in\mathbb{Z}[[q]]/q^{n+1}\mathbb{Z}[[q]]$.
\end{proof}

\subsection *{Acknowledgment}
This work was supported by Grant-in-Aid for JSPS Fellows Grant Number 19J00252 and Grant-in-Aid for Early-Career Scientists Grant Number 19K14528.

\bibliographystyle{amsalpha}
\bibliography{sl3theta_1row}
\end{document}